\documentclass[11pt]{amsart}
\usepackage{a4wide,xcolor,eucal,enumerate,mathrsfs}
\usepackage[normalem]{ulem}
\usepackage{amsmath,amssymb,amsthm}
\usepackage{epsfig}
\newcommand\N{\mathbb{N}}
\newcommand\R{\mathbb{R}}

\newcommand\Q{\mathbb{Q}}

\def\L{{\mathcal L}}
\def\A{{\mathcal A}}

\newcommand{\forget}[1]{}

\def\Om{{\Omega}}  
\def\om2{{\Om\times\Om}}
\def\M{\mathcal{M}}

\def\B{{\mathcal B}}

\def\supp{\mathrm{supp}\,}

\def\div{\mathrm{div}\,}

\def\Lip{\mathrm{Lip}}
\def\lip{\mathrm{lip}}

\newcommand{\res}{\llcorner} 

\newcommand{\MM}{\mathbb M}
\usepackage{color}

\newcommand{\ld}{[\![}
\newcommand{\rd}{]\!]}

\newcommand{\nor}[1]{\left\| #1 \right\|}


\newtheorem{theorem}{Theorem}[section]

\newtheorem{lemma}[theorem]{Lemma}
\newtheorem{proposition}[theorem]{Proposition}
\newtheorem{corollary}[theorem]{Corollary}

\theoremstyle{remark}
\newtheorem{remark}[theorem]{Remark}

\numberwithin{equation}{section}



\newcommand{\1}{\mathbf{1}}

\newcommand{\cur}[1]{\left\{ #1 \right\}}



\begin{document}

\title[Three superposition principles] {Three superposition principles: currents, continuity equations and curves of measures}

\author{Eugene Stepanov}%
\address{
St.Petersburg Branch
of the Steklov Mathematical Institute of the Russian Academy of Sciences,
Fontanka 27,
191023 St.Petersburg,
Russia
\and
Department of Mathematical Physics, Faculty of Mathematics and Mechanics,
St. Petersburg State University, Universitetskij pr.~28, Old Peterhof,
198504 St.Petersburg, Russia
\and ITMO University
}
\email{stepanov.eugene@gmail.com}
\thanks{The work of the first author was financed by GNAMPA, by RFBR grant \#14-01-0053,
  by the project
2010A2TFX2 ``Calcolo delle variazioni''
 of the
Italian Ministry of Research and by St.Petersburg State
University grants \#6.38.670.2013 and \#6.38.223.2014 and by the Russian government grant NSh-1771.2014.1. The second author has been partially supported by PRIN10-11 grant from MIUR
for the project Calculus of Variations and is a member of GNAMPA}

\author{Dario Trevisan}%
\address{ Universit\`a degli Studi di Pisa,
 Dipartimento di Matematica,
 Largo Bruno Pontecorvo 5,
 56127 Pisa, Italy}
\email{dario.trevisan@unipi.it}

\begin{abstract}
We establish a general superposition principle for curves of measures solving a continuity equation on metric spaces
without any smooth structure nor underlying measure, representing them as marginals of measures concentrated on the solutions of the associated ODE defined by some algebra of observables. We relate this result with decomposition of acyclic normal currents in metric spaces.

As an application, a slightly extended version of a probabilistic representation for absolutely continuous curves in Kantorovich-Wasserstein spaces, originally due to S.\ Lisini, is provided in the metric framework. This gives a hierarchy of implications between \emph{superposition principles} for curves of measures and for metric currents.
\end{abstract}

\keywords{Continuity equations; metric currents; measurable derivations; superposition principles; Kantorovich-Wasserstein distance}

\maketitle

\section{Introduction}

Recently, a lot of results appeared in the literature regarding the structure of curves of measures in metric spaces, all referred to as ``superposition principles''. In particular, this includes
\begin{itemize}
  \item[(A)] the results relating the family of measures solving the first order PDE called \emph{continuity equation},
  with the measures on the space of solutions to the associated characteristic ODE;
  \item[(B)] the results giving the structure of curves of positive Borel measures
  absolutely continuous with respect to some Kantorovich-Wasserstein distance, in terms of
  measures over some space of rectifiable curves, and, finally,
  \item[(C)] the results on decomposition (or, more appropriately, ``disintegration'') of general normal currents (in the sense of De Rham or in the sense of De Giorgi-Ambrosio-Kirchheim) in simpler
  ones associated with rectifiable curves.
\end{itemize}

The results of type~(A) provide a link between Euler (continuity equation) and the Lagrange (characteristic ODE) descriptions of the flow of measures and can be traced back to  theory of ``generalized curves'' by L.C.\ Young~\cite{Young-37}.
For equations in Euclidean spaces such a result has been  proven in~\cite[\S 8]{Ambrosio-Gigli-Savare-05} and
further implemented in many different contexts, in particular with respect to DiPerna-Lions uniqueness theory (see~\cite{ambrosio-crippa-edinburgh} for a recent overview), in the theory of stochastic processes, e.g.\ in the theory of c\`adl\`ag martingale problems in abstract spaces (Echeverria theorem~\cite[theorem 4.9.17]{ethier-kurtz}) and the diffusion counterpart of DiPerna-Lions theory~\cite{figalli-sdes}. In~\cite{AmbrTrevis14-conteq}
this result has been extended to general metric measure spaces and formulated in the language of $\Gamma$-calculus. It is the primary goal of the present paper to extend this result to the most general context of metric spaces without any specifically distinguished reference measure.

The results of type~(B) have been proven in~\cite[\S 8]{Ambrosio-Gigli-Savare-05} for curves of measures in a Euclidean space and further by S.~Lisini~\cite{lisini07} for general metric spaces. They found a lot of applications in optimal transportation theory~\cite{Ambrosio-Gigli-Savare-05} and in many related theories, e.g.\ in the theory of Sobolev and BV spaces over metric measure spaces (see, e.g.~\cite{GigliHan15-conteq,AmbrDiMar14,AmbrDiMarSav15,AmbrGigliMonRaj15,AmbrGigliSav15,AmbrGigliSav14,AmbrGigliSav13,AmbrColombDiMar12-reflSobol1q}). In the present paper we extend the original result of S.~Lisini showing additionally the relationship between curves of measures and continuity equations for general metric space setting,
and show that in fact this result is a corollary of the superposition principle for solutions of continuity equations (i.e.~(A)).

The results of type~(C) for De Rham currents have been proven first by S.~Smirnov~\cite{Smirnov94} (later several different proofs have been given for partial results of this kind, see e.g.\ \cite{Santambr-dacmos14} and references therein, and also \cite{GabrMin-econ14} for an interesting discrete analogue), and for general metric current in~\cite{PaoSte11-acycl, PaoSte12-cycl}. In this paper we show in fact that
the result on representation of acyclic metric currents provides~(A) for general metric spaces. This establishes the hierarchy of
superposition principles $(C)\Rightarrow (A)\Rightarrow (B)$.

\subsection{Description of results.} Our first main result is Theorem~\ref{thm:sp} giving the general superposition principle
of type~(A). It says (in a sense to be made precise) that for every metric space $E$, a time-dependent family of vector fields $\{V_t\}_{ t \in (0,1)}$ on $E$ and a narrowly continuous  family of measures $\{\mu_t\}_{t \in [0,1]}$ solving the continuity equation
\begin{equation} \label{eq:pde-introduction}
\partial_t \mu_t + \div( V_t  \mu_t) = 0 \quad \text{in $E\times (0,1)$,}
\end{equation}
there exists a Borel probability measure $\eta$ on the space $C([0,1]; E)$ concentrated on the solutions to the ODE
\begin{equation}\label{eq:ode-introduction}
\dot{\theta} (t)= V_t (\theta) \quad \text{for a.e.\ $t \in  (0,1)$,}
\end{equation}
such that $(e_t)_{\#} \eta = \mu_t$, for every $t \in [0,1]$, where $e_t\colon \theta\in C([0,1];E)\mapsto \theta(t)\in E$
is the evaluation map. Of course, the notions of solutions as well as some objects appearing in the above equations have to be properly understood, which leads to a number of difficulties that we explain below.

\subsubsection*{How to understand the weak solution of continuity equation (what is the right class of test functions)}
One clearly has to look for solutions of~\eqref{eq:pde-introduction} in the appropriately weak sense of duality with some set of test functions over $E\times [0,1]$.
The reasonable choice of the latter would be smooth functions, if $E$ possesses a smooth structure (e.g.\ is a Euclidean space), while in a generic metric space the usual substitute of smooth functions are Lipschitz ones.
But then there is a problem of interpreting~\eqref{eq:pde-introduction} already in the Euclidean setting, moreover, even
in case $E=\R$.
Namely, formally writing the weak formulation of~\eqref{eq:pde-introduction}, we have
\[
 \int_0^1\, dt \int_E \left((V_t\pi(\cdot,t))(x)+\partial_t \pi(x,t)\right)\,d\mu_t(x)=0
\]
(the meaning of $V_t\pi$ will be explained in a moment) for every test function $\pi$ appropriately vanishing at $t=0$ and $t=1$. But if $\pi$ is just Lipschitz continuous, then $\partial_t \pi(x,t)$
is not well-defined for $\mu_t\otimes dt$-a.e.\ $(x,t)\in E\times \R$ unless $\mu_t$ satisfies some extra conditions
(e.g.\ are all absolutely continuous with respect to the same measure; note that this is exactly the case considered in~\cite{AmbrTrevis14-conteq}). This is the case of even the simplest
example of the flow representing the evolution of just a single particle in $E=\R$, in which case $\mu_t$ are Dirac measures.
 The usual valid alternative which we choose here is using only test functions which belong to the tensor product of some algebra $\A$ of functions of only space variables and, say, bounded Lipschitz (or just smooth) functions of time.

\subsubsection*{What is the action of a vector field in absence of smooth structure}
 One has to understand also the ``action $V_t \pi$ of a vector field $V_t$ over a test function $\pi$'', once $E$ is a generic metric space which may not possess any smooth structure.
To define it,  we rely on a suitable notion of derivation, i.e., we define $V_t$ as an operator acting on  $\A$, satisfying the Leibniz rule
\[
V_t(fg) = f V_t(g) + g V_t(f)\quad  \text{for $f$, $g \in \A$,}
\]
together with some further mild continuity condition. 
This notion is strongly related to the notion of measurable derivations introduced by N.~Weaver in~\cite{Weaver_deriv00}  but is in fact \emph{strictly weaker} than the latter (see Remark~\ref{rem:not-weaver}); another close analogue is the class of derivations
introduced in~\cite{dimarino-14}.
It includes as particular case derivations in metric measure spaces considered in~\cite{AmbrTrevis14-conteq}, but is more flexible,
since as opposed to the latter paper $\mu_t$ here is not assumed to be absolutely continuous with respect to some reference measure.

\subsubsection*{How to understand the ODE}
At last, note that also the ODE~\eqref{eq:ode-introduction} formally does not make sense
 unless $E$ possesses a smooth structure and absolutely continuous curves in $E$ are differentiable almost everywhere.
 To understand properly this equation, we use the ideology described in~\cite{JetNestruev}. Namely, we consider the functions from $\A$ as observables
 and interpret~\eqref{eq:ode-introduction} as
 \begin{equation}\label{eq:ode-observables}
 \frac{d}{dt} \pi (\theta (t)) = (V_t \pi) (\theta(t)), \quad \text{a.e.\ $t \in  (0,1)$,}
 \end{equation}
for every $\pi\in \A$ and
for $\eta$-a.e.\ $\theta$ (the set of curves $\theta$ such that~\eqref{eq:ode-observables} holds depending possibly on $\pi$).
If $E$ is, say, a Banach space with Radon-Nikodym property (or a metric space bi-Lipschitz embedded in the latter, in particular, a smooth finite-dimensional manifold),
then we show that in fact~\eqref{eq:ode-introduction} holds as it is written, with $V_t$ replaced by a pointwise-defined vector field $\hat{V}_t$.


\subsubsection*{Basic machinery}
To prove Theorem~\ref{thm:sp}, we introduce the metric $1$-current $T'$ defined over the space-time $E\times [0,1]$
formally by the expression
\begin{equation}\label{eq:introduction-current}
    T' (f\,d\pi) :=
  \int_0^1\, dt \int_E f(x,t) \left((V_t\pi(\cdot,t))(x)+\partial_t \pi(x,t)\right)\,d\mu_t(x).
\end{equation}
One certainly has to prove that the above formula makes sense for every Lipschitz function $\pi$ of space and time; this is nontrivial since we a priori defined $V_t$ only on some subalgebra $\A$ of Lipschitz functions of only a space variable, and, as mentioned, for a generic Lipschitz function $\pi$ even the derivative $\partial_t \pi$ is not generally speaking defined $\mu_t\otimes dt$-a.e. The abstract Lemma~\ref{lm_extnormalcurr1} on extension of bilinear functionals to currents will be instrumental here. Note that once the current $T'$ is defined, thanks to the abstract connection between metric currents and Weaver derivations~\cite[Appendix A and B]{PaoSte14-flow}, we may read it alternatively as an extension of the original vector fields $\{V_t\}_{t \in (0,1)}$ to Lipschitz functions, but such an extension in general depends as well on the solution $\{\mu_t\}_{t \in [0,1]}$ (the ``dynamics'') we are dealing with.

Once $T'$ is defined, we show that it is acyclic,
 and then apply the Smirnov-type decomposition for acyclic normal $1$-currents in metric spaces obtained in~\cite{PaoSte11-acycl}, i.e.\ the superposition principle of type~(C), representing
this current by a measure over Lipschitz space-time curves.
We further show  that
since the ``vector field'' defining the current $T'$ has ``positive component along the time direction'',
or, in other words, the current ``is directed forward in time''
(this intuitive concept can be in fact expressed in intrinsic terms, see Remark~\ref{rem_forward1})
then the space-time curves
representing $T'$ can be collectively reparameterized so as to have the unit velocity in the time direction.
Note that, although it might seem counterintuitive, the choice of the distance in space-time will play an important role here.
One gets in this way the measure $\eta$ over absolutely continuous curves in $E$ that does the job.
We further show, similarly to the proof of proposition~5.1 from~\cite{PaoSte14-flow}, that if $E$ is a Banach space
and one chooses any representation
of $T'$ without cancelation of mass, then
instead of the ODE~\eqref{eq:ode-observables} or~\eqref{eq:ode-introduction} we have only the weaker relationship for metric velocities
 \begin{equation}\label{eq:ode-metric-velocity}
 |\dot{\theta}| (t) = |V_t|(\theta(t)), \quad \text{a.e.\ $t \in  (0,1)$,}
 \end{equation}
for $\eta$-a.e.\ $\theta$, where $|V_t|$ is some naturally defined ``modulus'' of the vector field $V_t$.
If, moreover, the norm in $E$ is strictly convex, then we show~\eqref{eq:ode-observables}
(or even~\eqref{eq:ode-introduction}, if $E$ has Radon-Nikodym property) in Theorem~\ref{th_fl2T'repr2}.
This requires a the study of the equality case in the Jensen inequality for vector integrals
which we interpret as a weak$^*$ integrals, see Appendix~\ref{sec_wkstInt} below.
Note now that the claim of
the Theorem~\ref{thm:sp} is independent of the equivalent choice of the distance in $E$ (hence of the mass of $T'$).
Thus the result for a generic metric space $E$
follows then from embedding this space  in $\ell^\infty$ and renorming the latter space by an equivalent dual strictly convex norm.
We think it also worth mentioning here that the relationship between the continuity equation and currents in space-time is even deeper: in fact, every such current ``directed forward in time'' defines in a natural way
a flow $\{\mu_t\}$ of measures and the family of velocity vector fields satisfying~\eqref{eq:pde-introduction} (see Remark~\ref{rem:from-curves-to-equation}).

\subsubsection*{Representation of curves of measures by measures over curves}
As an application of Theorem~\ref{thm:sp} we get
the result the structure of  curves of probability measures over a metric space $E$
absolutely continuous with respect to some Kantorovich-Wasserstein distance $W_p$ with $p \in (1,\infty]$, i.e.\ superposition principle of class~(B). We show in fact (Theorem~\ref{th_Lisini1}) that any such curve in fact satisfies the continuity equation for some
explicitly constructed family of vector fields $\{V_t\}_{t\in [0,1]}$, and hence in particular satisfies~\eqref{eq:ode-observables} (or~\eqref{eq:ode-introduction} if it makes sense) for some measure $\eta$ over absolutely continuous curves, and, moreover,
\[
\int_{C([0,1];E)} |\dot \theta| ^p (t) d\eta(\theta) = | \dot{ \mu } _t | _{W_p}^p, \quad \text{a.e.\ $t \in (0,1)$.}
\]
This extends slightly the original results of S.~Lisini from~\cite{lisini07,Lis14} (in fact, in the latter papers the family of vector fields has been
found only for the case when $E$ is a Banach space with Radon-Nikodym property, which is a particular case of our general result).
Our proof is in a sense ``dual'' to that of Lisini, and in fact follows the strategy of the proof of the analogous result~\cite[theorem 8.2.1]{Ambrosio-Gigli-Savare-05}, for Euclidean spaces.

%

\subsubsection*{Further applications}
We also investigate some refinements of results related to flows of measures on metric spaces, in the framework introduced in~\cite{PaoSte14-flow}, where a definition of measures $\mu$ that may be ``transported'' by some current $T$
(in space $E$ only rather than in space-time) is given. This notion is strongly related to existence of a solution of the continuity equation, with initial measure $\mu_0 = \mu$, driven by a vector field associated to that of the current $T$. Here we slightly strengthen~\cite[theorem~6.1]{PaoSte14-flow} relating it to the superposition principle, whenever the underlying space $E$ is a Banach space with Radon-Nikodym property and strictly convex norm, by proving that any measure on curves provided by the latter for the continuity equation satisfied by the naturally defined flow of $\mu$, in fact represents the same vector field of $T$, or, in other words, represents a normal current in space with the same underlying vector field as $T$ and with different mass measure (still absolutely continuous with respect to that of $T$).


Let us stress that our proof technique is based on the more recent approach of~\cite{PaoSte11-acycl}, ultimately relying on a reduction to polygonal currents, and not on Smirnov's original proof. In fact, in view of Smirnov's original argument, it seems reasonable that it should allow one to deduce a superposition results for currents from that for continuity equations, reversing our main implication (of course, one should first establish a general result for continuity equations by other means).



\section{Basic notation}\label{sec:notation}

Throughout the paper, we let $E$ be a metric space, endowed with a distance $d\colon E \times E \to [0,\infty)$, and let $C_b(E)$ denote the space of real-valued,
 bounded continuous functions on $E$, endowed with the uniform norm $\nor{\cdot}_\infty$.
 We write $B_r(x) := \{ y \in E \colon d(x,y)< r\}$ for the open ball centered at $x \in E$, with radius $r>0$.

For any set $D\subset E$ and any function $\pi$ defined on $E$, we let $\bar D$ be the closure of $D$, $\1_D$ be its characteristic function, $\pi \res D$ be the restriction of $f$ to $D$.


We denote by $\Lip(E)$ (respectively, $\Lip_b(E)$, $\Lip_1(E)$) the set of real-valued Lipschitz functions on $E$ (respectively, bounded Lipschitz functions, $1$-Lipschitz functions on $E$), and for $\pi\colon E \to \R$, $D \subseteq E$, we let $\Lip(\pi \res D) \in [0,\infty]$ denote the Lipschitz constant of $\pi\res D$. 
 For $x \in E$, we define the \emph{asymptotic Lipschitz constant} of $\pi$ at $x\in E$ as
\[
(\lip\,\pi \res D)(x) := \inf_{\varepsilon>0} \Lip\,(\pi\res( D \cap  B_\varepsilon(x) ) )= \lim_{\varepsilon\to 0^+} \Lip\,(\pi\res (D \cap B_\varepsilon(x) ))
\]
and notice that $x \mapsto \lip\,\pi(x):= (\lip\,\pi\res E)(x)$ is upper semicontinuous. Moreover, one has $\lip( f +g)(x) \le \lip(f)(x) + \lip(g)(x)$ and $\lip(\lambda f )(x) = |\lambda| \lip(f)(x)$. Such a notion is inherently related to that of \emph{local slope} at $x \in E$,
\[
|\nabla\pi|(x):= \limsup_{y\to x} \frac{|\pi(x)-\pi(y)|}{d(x,y)}
\]
(unless $x$ is an isolated point, in which case we set $|\nabla \pi|(x):=0$). Namely, one has that
\[
(\lip\,\pi)(x)\geq |\nabla\pi|^*(x),
\]
where  $ |\nabla\pi|^*$ stands for the upper semicontinuous envelope of $|\nabla\pi|$, i.e.\ the minimum upper semicontinuous function not exceeding $|\nabla\pi|$; the above inequality is an equality if $E$ is a length space~\cite[proposition~3.4]{AmbrColombDiMar12-reflSobol1q}.


It is well-known~\cite{Weaver_deriv00} that $\Lip_b(E)$ equipped with the norm $\pi\mapsto \|\pi\|_\infty+\Lip\,u$ (or any equivalent one)
is dual to a Banach space called Aarens-Eels space, which is separable whenever so is $E$. This duality induces
the weak$^*$ topology over $\Lip_b(E)$. Convergence of a sequence $\{\pi_\nu\}\subset \Lip_b(E)$ with respect to the latter
(i.e.\  weak$^*$ convergence) to some $\pi\in \Lip_b(E)$ is equivalent to pointwise convergence of $\pi_\nu$ to $\pi$
(i.e.\ $\lim_\nu \pi_\nu(x)=\pi(x)$ for all $x\in E$)
with the sequence
of Lipschitz constants $\Lip\,\pi_\nu$ bounded.

Clearly, if $\lim_\nu \pi_\nu(x)=\pi(x)$ for all $x\in E$ with $\{\pi_\nu\}\in \Lip(E)$
(in particular, if $\{\pi_\nu\}\subset \Lip_b(E)$, $\pi\in \Lip_b(E)$ and $\pi_\nu\rightharpoonup \pi$ in weak$^*$ sense of $\Lip_b(E)$ as $\nu\to \infty$)
then
\begin{equation}\label{eq_lsclip1}
\Lip \, (\pi\res D) \leq \liminf_\nu \Lip\, (\pi_\nu\res D),
\end{equation}
for every $D\subset E$,
and the inequality above clearly may be strict.
If both $\lim_\nu \pi_\nu(x)=\pi(x)$ for all $x\in E$ and $\lim_\nu \Lip\, \pi_\nu=\Lip\, \pi$, then we say that $\pi_\nu$ Lip-converge to $\pi$
(an overview of some known density results for Lip-convergence can be found in~\cite{GarrJaraRang10,GarrJaraRang13}).

We denote by $PC^1([0,1])$ the set of piecewise continuously differentiable functions on $[0,1]$.
If $\A$ and $\B$ are linear spaces functions over $E$ and over $[0,1]$ respectively, then
we write  $\A\otimes\B$ for the space of functions
$\pi(x,t) = \sum_{k=1}^m\phi_k(t) g_k(x)$, with $g_k\in \A$ and $\phi_k\in \B$.
Unless otherwise stated, we always endow $E\times [0,1]$ with the distance
$d_2$, where
\[
d_p((x,s), (y,t)):= \left(d(x,y)^p +|t-s|^p\right)^{1/p}, \quad \text{for $x$, $y \in E$, $s$, $t \in [0,1]$, $p\in [1,+\infty)$.}
\]
For $\pi\in \Lip(E\times[0,1])$ we introduce the notation
$(\lip_E\,\pi)(x,t) := (\lip\,\pi \res ( E \times \{t\}) )(x)$,
for $x\in E$, $t\in [0,1]$.

All the curves in a metric space $E$ in the sequel will be considered continuous and parameterized over $[0,1]$, the set of all such curves
being denoted by $C([0,1];E)$.
A curve $\theta\colon [0,1]\to E$ is called \emph{absolutely continuous} if there exists a nonnegative integrable function $g\colon (0,1)\to \R$ such that
$d(\theta(s), \theta(t) ) \le \int_s^t g(r)\, dr$ for every $\{s,t\}\subset [0,1]$, and we let $|\dot{\theta}|(t)$ stand for its metric derivative
at $t\in [0,1]$, which is defined for a.e.\ $t\in [0,1]$, see e.g.~\cite{AmbrTilli00}.


Whenever $E$ is a Banach space endowed with the norm $\|\cdot\|$, we denote by $E'$ its continuous dual
(the respective dual norm being denoted by $\|\cdot\|^\prime$)
and by $\langle \cdot, \cdot\rangle$ the respective duality pairing.
Following~\cite{Goodman71_qdiff}, we call a function $\varphi\colon E\to \R$ is called \emph{quasi-differentiable} at  $x\in E$, if there exists a (unique) functional $d\varphi(x)\in E'$ such that
\[
\lim_{\varepsilon \to 0 } \frac{ \varphi(\theta(t+\varepsilon))- \varphi(\theta(t)) }{\varepsilon } = \langle\dot{\theta}(t),d\varphi(x)\rangle,
\]
for every $\theta \in C([0,1];E)$ and $t \in (0,1)$ such that $x=\theta(t)$ and $\theta$ is differentiable at $t$ with derivative $\dot{\theta}(t)$,
 and simply \emph{quasi-differentiable},  if it is quasi-differentiable at every $x\in E$.
 This notion of differentiability is slightly stronger than G\^{a}teaux one but strictly weaker than Fr\'{e}chet one.
 We let $Q^1(E)$ be the space of quasi-differentiable functions $\varphi\colon E\to \R$ such that
$\|d\phi(\cdot)\|'$ is uniformly bounded
and the map $(x,y)\mapsto \langle y, d\varphi(x) \rangle$ is continuous on $E \times E$.
Let us remark that $Q^1(E)\subset \Lip(E)$, since for $\varphi \in Q^1(E)$, and $\{x,y\}\subset E$, we may let $\theta(t) := (1-t)x + ty$, and by the fundamental theorem of calculus, we obtain the bound
\[
|\varphi(y)-\varphi(x)|  \leq  \int_0^1 \|y-x\|\cdot \|d\varphi((1-t)x+ty)\|'\, dt \leq \|y-x\| \sup_{z \in E} \|du(z)\|'  .
\]
We also note that for $\pi\in Q^1(E)$, one has $\|d\pi(x)\|^\prime\leq \lip\,\pi(x)$, since for every open set $D \subseteq E$, one has $\Lip (\pi\res D)=\sup_{z\in D} \|d\pi(z)\|^\prime$.

Unless otherwise stated, all the measures considered in the sequel are Borel, positive and finite.
For a Borel measure $\mu$ on $E$ and $D\subset E$ Borel, we let $\mu\res D \colon A \mapsto \mu(A \cap D)$ stand for the restriction of $\mu$ to $D$.
If $\mu$ and $\nu$ are measures over $\Omega$ with $\mu$ is absolutely continuous with respect to $\nu$, we write $\mu\ll\nu$
and $\mu = \frac{d\mu}{d\nu} \nu$ where $\frac{d\mu}{d\nu}$ stands for the  respective Radon-Nikodym derivative.
We also need 
Lebesgue-Bochner spaces $L^p(\Omega,\mu; X)$ 
with $X$ a separable Banach space
 of $\mu$-a.e.\ equivalence classes of $X$-valued maps on $E$ that are Bochner integrable on $\Omega$ 
 with power $p\in [1,+\infty)$  or, for $p=+\infty$, $\mu$-essentially bounded. We omit the reference to $X$ when $X = \R$. We use the standard notation $\ell^p$ for the space of sequences summable with $p$-th power, if  $p\in [1,+\infty)$, or the space of bounded sequences, if $p=+\infty$.
 If $E$ is a space of sequences, i.e.\ $E\subset\R^{\infty}$, then a function $\pi\colon E\to\R$ is called \emph{cylindrical},
 if it depends only on finite number of coordinates. The set of such functions is  denoted $\mathrm{Cyl}(E)$.

For metric currents we use the notation from~\cite{PaoSte11-acycl}, which coincides with that of~\cite{AmbrKirch00} except for the notation on the mass measure
of the current $T$ which we denote by $\mu_T$.

\section{The superposition principle for continuity equations}\label{sec:general-sp}

In this section, we formulate and prove our main result concerning solutions to continuity equations on metric spaces.

\subsection{Statement of main result}
\label{sec:statement}

For a metric space $(E,d)$, we consider an algebra $\A\subset \Lip_b(E)$ (i.e., a family stable with respect to pointwise linear combinations and products) with $\1 \in \A$ and we define the continuity equation in duality with respect to $\A$.
Namely, suppose we are given a curve $\{\mu_t\}_{t \in [0,1]}$ of finite nonnegative measures on $E$ such that $t \mapsto \mu_t(A)$ is Borel for every $A \subset E$ Borel. We introduce a (time-dependent) family of vector fields  $\{V_t\}_{t \in (0,1)}$ as a linear operator $V\colon \A\to L^1(E \times [0,1] , \mu_t \otimes dt)$, with  $(V_t \pi) (x) := (V\pi)(t,x)$, satisfying
the Leibniz rule
\begin{equation}\label{eq_VtLeib1}
V_t (fg)= f V_t g + g V_t f, \quad \text{for a.e.\ $t\in [0,1]$ and every $f$, $g \in \A$,}
\end{equation}
and the inequality
\begin{equation}\label{eq_VtLip1}
|(V_t\pi) (x)|\leq \overline{ V}(x,t)\,  \lip\, \pi(x), \quad \text{for a.e.\ $t\in [0,1]$, $\mu_t$-a.e.\ $x\in E$ and every $\pi \in \A$,}
\end{equation}
for some (fixed) Borel function $\overline{ V}\colon E\times[0,1]\to [0,\infty]$ with
\begin{equation}\label{eq:V-bar-integrability}
\int_0^1\int_E \overline{ V} (x,t)\,d\mu_t(x) dt < +\infty.
\end{equation}

\begin{remark}\label{rem:not-weaver}
Note that the above conditions do not  imply that $V_t$ is a $\mu_t$-derivation (or $\mu_t$-measurable vector field) in the sense of Weaver \cite{Weaver_deriv00}, even if $\overline{V}$ is bounded. Indeed, we may let $E:=[0,1]$, $\mu_t:=\delta_t$ for $t\in [0,1]$, $\A:= C^1_b(E)$ and $V_t \pi (s):= \frac{d}{ds}\pi(s)$ (independent of $t$). Then, $V$ satisfies the above conditions with $\overline{V}=1$, but it is not a $\mu_t$-vector field: in fact, no nonzero
$\mu_t$-vector fields exist for such a choice of $\mu_t$.
\end{remark}

With the above notation we say that $\{\mu_t\}_{t \in [0,1]}$ satisfies the continuity equation~\eqref{eq:pde-introduction} if, for every $\pi \in \A$, one has
\begin{equation}\label{eq_fl2cont1}
    \frac{d}{dt}\int_E \pi\,d\mu_t = \int_E V_t\pi \,d\mu_t,
\end{equation}
in the sense of distributions on the interval $(0,1)$.

Our main result will require some density assumptions on $\A$ that we list below.
Namely, we will assume that either
\begin{itemize}
\item[($\A_1$)]
there is 
a constant $C>0$ such that
for every $\pi \in \Lip_b(E\times [0,1])$
there is a sequence of uniformly bounded functions $\{\tilde\pi_m\}\subset \Lip(E\times [0,1])$ with uniformly bounded Lipschitz constants
such that
$\tilde\pi_m(\cdot,t)\in \A$ for all $t\in [0,1]$, $m \in \N$,
$\lim_m \tilde\pi_m=\pi$ pointwise in $E \times [0,1]$,
and
\begin{equation}\label{eq_Liplocconv1b}
\limsup_m \lip\, \tilde\pi_m(x,t)  \leq C\lip\, \pi(x,t)\quad\text{for all $(x,t)\in E\times [0,1]$},
\end{equation}
\end{itemize}
or, alternatively,
\begin{itemize}
\item[($\A_2$)]
there is 
a constant $C>0$ such that
for every $\pi \in \Lip_b(E\times [0,1])$ there is a sequence of functions
$\{\tilde\pi_m\}\subset \Lip(E\times [0,1])$ as in~($\A_1$) but satisfying
\begin{equation}\label{eq_Liplocconv1a}
\limsup_m \Lip\, \tilde \pi_m\leq C \Lip\, \pi.
\end{equation}
instead of~\eqref{eq_Liplocconv1b}, and
\item[($B$)] for every open $U\subset E\times [0,1]$ and $f\in \Lip_b(E\times [0,1])$ with $\supp\, f\subset U$
there is a sequence of uniformly bounded functions
$\{f_m\}\subset \Lip_b(E\times [0,1])$ with uniformly bounded Lipschitz constants
such that for all $m\in \N$ one has $f_m(\cdot,t)\in \A$ for all $t\in [0,1]$
and $\supp\, f_m \subset U$, while $\lim_m f_m=f$
pointwise.
\end{itemize}

Note that all the above assumptions are stable with respect to any equivalent choice of the distance in $E\times [0,1]$.
We will see in the sequel that in fact, they are stronger than what one really needs for
the results to hold, and hence can be further weakened by replacing pointwise convergence with weaker notions
of convergence involving the measures $\mu_t$, see Remark~\ref{rem:a1-a2_1} below.
Anyway, the assumptions~($\A_1$),~($\A_2$) and~($B$) or their respective weaker versions will not be used together. Namely, we will
assume either~($\A_1$) or alternatively trade the local condition~\eqref{eq_Liplocconv1b} for the global one~\eqref{eq_Liplocconv1a}, hence assuming~($\A_2$) and compensate then the lack of local information by assuming additionally~($B$). Assumptions~($\A_2$) and~($B$) may be easier to verify in the finite-dimensional setting, as the following Remark shows.

\begin{remark}\label{rem:a1-a2_0a}
 An important example when~($\A_2$) and~($B$) hold 
 is that of $E$ being a smooth connected Riemannian manifold, with $\A = C^\infty_c(E)$, or a compact connected
 Finsler manifold with $\A = C^1_b(E)$ (continuously differentiable functions with bounded derivatives),  see e.g.~\cite{GarrJaraRang10,GarrJaraRang13}.
\end{remark}

On the other hand, in some infinite-dimensional settings it might be difficult to have~($B$), hence~($\A_1$) might be easier to verify. For instance (and we will use this in the sequel in section~\ref{sec_Lisini1}), if $E$ is a separable subspace of
$\ell^\infty$, then by Lemma~\ref{lm_mcurr_densc0lip} an appropriate algebra of smooth cylindrical (i.e.\ depending only on a finite number of coordinates) functions satisfies~($\A_1$). The following trivial fact however is worth mentioning.

\begin{remark}\label{rem:a1-a2_0}
Clearly, $\A:= \Lip_b(E)$ satisfies all the assumptions~($\A_1$),~($\A_2$) and~($B$).
\end{remark}

Note that Lemma~\ref{lm_approxtens1} says that whenever $\A$ satisfies~($\A_1$) (resp.~($\A_2$)), then automatically a better approximation result
holds, namely, every function in $\Lip_b(E\times [0,1])$ can be approximated in the sense of pointwise convergence with uniformly bounded Lipschitz
constants by uniformly bounded functions from $\A\otimes PC^1([0,1])$, such that,~\eqref{eq_Liplocconv1b}
(resp.~\eqref{eq_Liplocconv1a}) holds for the approximating sequence (maybe with a different, but still universal, constant $C>0$). The same is true about condition~($B$) by Remark~\ref{rem_approxtens2}, namely, if $\A$ satisfies this condition, then
every function in $\Lip_b(E\times [0,1])$ can be approximated in the same sense
by uniformly bounded functions from $\A\otimes PC^1([0,1])$ supported in the neighborhood of the support of the limit function.

In the rest of this section we address the proof of the following result.

\begin{theorem}[Superposition principle for continuity equations]\label{thm:sp} Let $\{\mu_t\}$ and $\{V_t\}$, where $t \in [0,1]$, be a narrowly continuous curve of finite positive Borel measures and a family of time-dependent vector fields  as above, such that~\eqref{eq_VtLeib1}, \eqref{eq_VtLip1}, \eqref{eq:V-bar-integrability} and \eqref{eq_fl2cont1} hold.  If either assumption~($\A_1$) or the assumptions~($\A_2$) and~($B$) hold for $\A$, then there exists some finite positive Borel measure $\eta$ over $C([0,1];E)$ concentrated on absolutely continuous curves $\theta\colon[0,1] \to E$ such that $(e_t)_\# \eta = \mu_t$, for every $t \in [0,1]$, and one has
\[\frac{d}{dt} \pi(\theta (t))= (V_t\pi)(\theta(t)) \quad
\text{for $\eta$-a.e.\ $\theta$ and a.e.\ $t \in (0,1)$}
\]
for every $\pi \in \A$, the exceptional set of pairs $(\theta,t)$ for which the above equation does not hold depending possibly on $\pi$.
\end{theorem}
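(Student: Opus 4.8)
The plan is to realize the flow $\{\mu_t\}$ as the family of slices of a metric $1$-current $T'$ on the space-time $E\times[0,1]$, apply the Smirnov-type decomposition of acyclic normal $1$-currents from \cite{PaoSte11-acycl}, and then reparameterize the resulting space-time curves so that they move at unit speed in the time direction, thereby obtaining the desired measure $\eta$ on $C([0,1];E)$.

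First I would construct the candidate current. On $\A\otimes PC^1([0,1])$ one defines the bilinear functional
\[
T'(f\,d\pi) := \int_0^1 dt \int_E f(x,t)\bigl((V_t\pi(\cdot,t))(x)+\partial_t\pi(x,t)\bigr)\,d\mu_t(x),
\]
which makes sense on this dense subalgebra because $V$ is defined on $\A$ and $\partial_t\pi$ is classically defined for $\pi\in\A\otimes PC^1$. Using the Leibniz rule \eqref{eq_VtLeib1}, the bound \eqref{eq_VtLip1} together with the integrability \eqref{eq:V-bar-integrability}, and the approximation results for $\A$ (Lemma~\ref{lm_approxtens1} and Remark~\ref{rem_approxtens2}, invoked according to whether ($\A_1$) or ($\A_2$)+($B$) holds), one checks that the mass of $T'$ restricted to $\A\otimes PC^1$ is controlled by $\int_0^1\int_E(\overline V+1)\,d\mu_t\,dt$ on the appropriate regions; the abstract extension Lemma~\ref{lm_extnormalcurr1} then produces a genuine normal metric $1$-current $T'$ on $E\times[0,1]$ whose action on a general Lipschitz test form is still governed by \eqref{eq:introduction-current}, with $V_t\pi$ suitably extended. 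The continuity equation \eqref{eq_fl2cont1} is exactly the statement that $T'$ has no boundary except possibly on $E\times\{0,1\}$; more precisely $\partial T' = \mu_1\otimes\delta_1 - \mu_0\otimes\delta_0$ (as currents of dimension $0$), so $T'$ is normal, and because its ``vertical component'' $\partial_t\pi$ is strictly positive along the time direction one shows $T'$ is acyclic in the sense of \cite{PaoSte11-acycl} (any cycle would have zero time-component, forcing it to vanish) — this is the content of what the introduction calls ``directed forward in time,'' cf.\ Remark~\ref{rem_forward1}.

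Next I would apply the decomposition theorem for acyclic normal $1$-currents: there is a finite positive measure $\tilde\eta$ on Lipschitz curves $\gamma:[0,1]\to E\times[0,1]$ (parameterized, say, by arclength up to a scalar) such that $T' = \int \ld\gamma\rd\,d\tilde\eta(\gamma)$ with mass and boundary adding up without cancellation. Writing $\gamma(s)=(\theta(s),\tau(s))$, the forward-in-time property forces $\tau$ to be nondecreasing for $\tilde\eta$-a.e.\ $\gamma$; discarding the $\tilde\eta$-null set where $\tau$ is constant and reparameterizing each curve by its time coordinate (a collective, measurable reparameterization — here the choice of the $d_2$ distance on $E\times[0,1]$ is used to guarantee that unit vertical speed is compatible with the arclength constraint, as flagged in the introduction), one obtains curves $t\mapsto\theta(t)\in E$ that are absolutely continuous, and pushing forward $\tilde\eta$ under $\gamma\mapsto\theta$ gives the measure $\eta$ on $C([0,1];E)$. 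The marginal condition $(e_t)_\#\eta=\mu_t$ follows from the fact that the slices of $\ld\gamma\rd$ at time $t$ push forward to $\delta_{\theta(t)}$ and that the slices of $T'$ at time $t$ recover $\mu_t$ (using narrow continuity of $t\mapsto\mu_t$). Finally, the ODE \eqref{eq:ode-observables} is extracted from the representation: for fixed $\pi\in\A$, testing $T'$ against forms of the type $f(x,t)\,d\pi$ and using that the decomposition is without cancellation, one gets that for $\eta$-a.e.\ $\theta$ the absolutely continuous function $t\mapsto\pi(\theta(t))$ has derivative $(V_t\pi)(\theta(t))$ a.e., which is the assertion (with the $\pi$-dependent exceptional set, since $\A$ need not be separable).

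The main obstacle I expect is the construction and control of $T'$ as a \emph{bona fide} metric current on all Lipschitz forms — i.e., verifying the hypotheses of Lemma~\ref{lm_extnormalcurr1}. The subtlety is that $V_t$ is a priori only defined on $\A$, functions of the space variable alone, whereas metric currents must act on arbitrary Lipschitz functions $\pi$ of $(x,t)$ for which $\partial_t\pi$ need not even exist $\mu_t\otimes dt$-a.e.; the approximation hypotheses ($\A_1$) or ($\A_2$)+($B$), via Lemma~\ref{lm_approxtens1} and the asymptotic-Lipschitz-constant estimate \eqref{eq_VtLip1}, are precisely what is needed to pass to the limit and define $T'(f\,d\pi)$ consistently, but checking the multilinearity, continuity and locality axioms of \cite{AmbrKirch00} for the extension — in particular that the extended action does not depend on the approximating sequence — is the delicate technical heart of the argument. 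A secondary difficulty is the measurable ``collective reparameterization'' by the time coordinate and the verification that no mass is lost in passing from $\tilde\eta$ on space-time curves to $\eta$ on curves in $E$; this is handled by the same no-cancellation bookkeeping as in \cite{PaoSte14-flow}.
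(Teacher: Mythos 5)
Your high-level plan faithfully mirrors the paper's strategy — define the space-time functional $T'$ from \eqref{eq_fl2defT'}, extend it to a normal $1$-current on $E\times[0,1]$ via the abstract Lemma~\ref{lm_extnormalcurr1} (this is Proposition~\ref{prop_fl2T'curr1}), show it is ``directed forward in time'' and hence acyclic (Proposition~\ref{prop_fl2T'Sleq0}, Corollary~\ref{co_fl2T'acycl}), apply the Smirnov-type decomposition of \cite{PaoSte11-acycl}, and then reparameterize the space-time curves by the time coordinate (Proposition~\ref{prop_fl2T'repr1}). That much is right, and you correctly flag the role of the $d_2$ metric on $E\times[0,1]$.

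There is, however, a genuine gap at the very end, in the passage from the representation of $T'$ to the pointwise ODE \eqref{eq:ode-observables}. You write that ``testing $T'$ against forms of the type $f\,d\pi$ and using that the decomposition is without cancellation, one gets that for $\eta$-a.e.\ $\theta$ the function $t\mapsto\pi(\theta(t))$ has derivative $(V_t\pi)(\theta(t))$.'' This does not follow. What the representation and disintegration $\eta=\mu_t\otimes\nu_x^t$ give is only the \emph{averaged} identity
\[
(W\pi)(x,t)=\int_{e_t^{-1}(x)}\frac{d}{dt}\pi(\theta(t),t)\,d\nu_x^t(\theta)
\quad\text{for $\mu_t\otimes dt$-a.e.\ $(x,t)$,}
\]
and a function may have prescribed mean over each fiber without being constant on it. No-cancellation alone only upgrades a chain of inequalities in $\MM(T')$ to equalities (the computation \eqref{eq_fl2pinuxt2a1}); turning that equality into a pointwise statement is precisely the technical heart of the paper, carried out in Theorem~\ref{th_fl2T'repr2} via Lemma~\ref{lm_charODE_ban1}. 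This requires: (a) constancy of $|\dot\theta|(t)$ $\nu_x^t$-a.e.\ over each fiber, obtained from the equality case of a Jensen inequality for weak$^*$ integrals (Lemma~\ref{lm_revJensen_wkst1}, Appendix~\ref{sec_wkstInt}) together with the convexity structure of the product norm on $E\times[0,1]$; (b) strict convexity of the (dual, equivalent) norm to actually identify the derivative $\frac{d}{dt}\pi(\theta(t),t)$ with the fiber average $(W\pi)(\theta(t),t)$; and (c) an embedding of $E$ into $\ell^\infty$ renormed by an equivalent strictly convex dual norm, exploiting that the conclusion is invariant under equivalent metrics even though the mass of $T'$ is not. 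Your proposal omits all of this, and in fact you locate the ``delicate technical heart'' in the construction of $T'$ (which is real work but handled cleanly by Lemma~\ref{lm_extnormalcurr1}), whereas the more substantial subtlety is extracting the pointwise ODE from the averaged identity.
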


Let us point out that, in the  above theorem, narrow continuity of $t \mapsto \mu_t$ is a necessary condition for the validity of a probabilistic representation. On the other hand, it is not essential, since some additional separability assumption would imply existence of a unique narrowly continuous representative, arguing as in~\cite[lemma~8.1.2]{Ambrosio-Gigli-Savare-05}.

\begin{remark}\label{rem:a1-a2_1}
As it will be clear from the proof, the assumptions~($\A_1$),~($\A_2$) and~($B$) can be weakened and replaced by
the following assumptions~($\A_1'$),~($\A_2'$) and~($B'$) respectively.
\begin{itemize}
\item[($\A_1'$)]
There is 
a constant $C>0$ such that
for every $\pi \in \Lip_b(E\times [0,1])$
there is a sequence $\{\tilde\pi_m\}\subset \A\otimes \Lip_b([0,1])$ such that
$\lim_m \tilde\pi_m=\pi$ in $L^1(E \times [0,1],\mu_t\otimes dt)$,
$\lim_m \tilde\pi_m(\cdot,t)=\pi(\cdot,t)$ in $L^1(E,\mu_t)$ for every $t\in [0,1]$,
and
\begin{equation}\label{eq_Liplocconv1bb0}
\limsup_m \lip\, \tilde\pi_m(x,t)  \leq C\lip\, \pi(x,t)\quad\text{for all $(x,t)\in E\times [0,1]$},
\end{equation}
where the limit is in the weak$^*$ sense of $L^\infty(E\times [0,1], \mu_t\otimes dt)$.
\item[($\A_2'$)]
There is 
a constant $C>0$ such that
for every $\pi \in \Lip_b(E\times [0,1])$ there is a sequence of functions
$\{\tilde\pi_m\}$ as in~($\A_1'$) but satisfying
\begin{equation}\label{eq_Liplocconv1aa0}
\limsup_m \Lip\, \tilde \pi_m\leq C \Lip\, \pi.
\end{equation}
instead of~\eqref{eq_Liplocconv1bb0}.
\item[($B'$)] For every open $U\subset E\times [0,1]$ and $f\in \Lip_b(E\times [0,1])$ with $\supp\, f\subset U$
there is a sequence
$\{f_m\}\subset \A\otimes \Lip([0,1])$ with
 $\lim_m f_m=f$ in $L^1(E\times [0,1],\mu_t\otimes dt)$ and $\supp\, f_m\subset U$.
\end{itemize}
All these assumptions will only be needed in Proposition~\ref{prop_fl2T'curr1} on extension of the functional $T'$ defined
by~\eqref{eq_fl2defT'} below to a metric current.
\end{remark}

\subsection{Extension of the vector field to tensor products in space-time}
For an arbitrary $\pi\in \A\otimes \Lip_b([0,1])$, $\pi = \sum_{k=1}^m \phi_k(t)  g_k(x)$, with $g_k \in \A$,
$\phi_k\in \Lip_b(E\times [0,1])$,
 we define
\[ V_t\pi(\cdot,t) (x):= \sum_{k=1}^m \phi_k(t) (V_t g_k)(x).\]
Clearly the right hand side is in $L^1(E\times [0,1], \mu_t \otimes dt)$, since
\[ \left| \sum_{k=1}^m \phi_k(t) (V_t g_k)(x) \right|\le \overline{V}(x,t) \sum_{k=1}^m |\phi_k(t)| \, \lip \, g_k (x)\]
by~\eqref{eq_VtLip1}. The next lemma improves this inequality and entails in particular the correctness of such a definition.

\begin{lemma}\label{lem:extension-tensor-like-space-time}
For $\pi \in \A \otimes \mathcal{B}^\infty([0,1])$, one has
\[|(V_t\pi(\cdot, t) (x)|\leq \overline{ V}(x,t)\,  \lip_E\, \pi(x,t), \quad \text{for a.e.\ $t\in [0,1]$, $\mu_t$-a.e.\ $x\in E$.}
\]
\end{lemma}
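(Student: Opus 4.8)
The plan is to reduce matters, for each fixed time $t$, to the inequality~\eqref{eq_VtLip1} applied to the single space-variable function $\pi(\cdot,t)=\sum_{k=1}^m\phi_k(t)g_k$, which lies in $\A$ because $\A$ is a linear space and the $\phi_k(t)$ are scalars, and whose asymptotic Lipschitz constant in the space variable at $x$ equals, by definition of $\lip_E$, exactly $\lip_E\,\pi(x,t)$. The only genuine difficulty is that $\pi(\cdot,t)$ varies with $t$ whereas~\eqref{eq_VtLip1} carries a $\mu_t\otimes dt$-null exceptional set that may depend on the test function, so one cannot invoke it simultaneously for the (possibly uncountably many) functions $\{\pi(\cdot,t)\}_{t\in[0,1]}$; I would circumvent this by a density argument in the coefficients.

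Concretely, I would fix the representation $\pi=\sum_{k=1}^m\phi_k(t)g_k(x)$ with $g_k\in\A$, $\phi_k\in\mathcal B^\infty([0,1])$, and set $\mathcal G:=\{\sum_{k=1}^m q_kg_k:(q_k)\in\Q^m\}$, a countable subset of $\A$. Since $V$ is linear as a map into $L^1(E\times[0,1],\mu_t\otimes dt)$, for every rational tuple $(q_k)\in\Q^m$ the identity $V(\sum_k q_kg_k)=\sum_k q_k Vg_k$ holds $\mu_t\otimes dt$-a.e.\ on $E\times[0,1]$, and by~\eqref{eq_VtLip1} for every $h\in\mathcal G$ one has $|(V_th)(x)|\le\overline{V}(x,t)\,\lip\, h(x)$ for a.e.\ $t$ and $\mu_t$-a.e.\ $x$. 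Collecting these countably many exceptional sets into a single $\mu_t\otimes dt$-null set $N\subset E\times[0,1]$, off $N$ all of the above identities and inequalities hold simultaneously.

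Then I would fix $(x,t)\notin N$ and choose rationals $q_k^{(j)}\to\phi_k(t)$ as $j\to\infty$, putting $h_j:=\sum_k q_k^{(j)}g_k\in\mathcal G$. On the one hand $(V_th_j)(x)=\sum_k q_k^{(j)}(V_tg_k)(x)\to\sum_k\phi_k(t)(V_tg_k)(x)=(V_t\pi(\cdot,t))(x)$, the last equality being precisely the definition of the quantity in the statement; on the other hand $|(V_th_j)(x)|\le\overline{V}(x,t)\,\lip\, h_j(x)$. Using $\lip(f+g)(x)\le\lip(f)(x)+\lip(g)(x)$, $\lip(\lambda f)(x)=|\lambda|\lip(f)(x)$ and $\lip\, g_k(x)\le\Lip\, g_k<\infty$ (as $g_k\in\Lip_b(E)$), one gets $\lip\, h_j(x)\le\lip(\pi(\cdot,t))(x)+\sum_k|q_k^{(j)}-\phi_k(t)|\,\lip\, g_k(x)$, hence $\limsup_j\lip\, h_j(x)\le\lip(\pi(\cdot,t))(x)=\lip_E\,\pi(x,t)$. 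Passing to the limit in $|(V_th_j)(x)|\le\overline{V}(x,t)\,\lip\, h_j(x)$ yields $|(V_t\pi(\cdot,t))(x)|\le\overline{V}(x,t)\,\lip_E\,\pi(x,t)$ for every $(x,t)\notin N$, that is, for a.e.\ $t$ and $\mu_t$-a.e.\ $x$, which is the claim.

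I expect the only delicate point to be the measure-theoretic bookkeeping of the second paragraph: one must gather the a.e.\ identities expressing linearity of $V$ on rational combinations of the $g_k$ together with the bounds~\eqref{eq_VtLip1} for all $h\in\mathcal G$ off one common $\mu_t\otimes dt$-null set \emph{before} letting the rational coefficients converge. Everything else is a routine use of the sublinearity and positive $1$-homogeneity of the asymptotic Lipschitz constant and of the boundedness of $\lip\, g_k$ on $\A\subset\Lip_b(E)$. Note that the Leibniz rule~\eqref{eq_VtLeib1} plays no role here: only the linearity of $V$ and~\eqref{eq_VtLip1} are used.
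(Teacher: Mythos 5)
Your argument is correct and is essentially the paper's own proof: the paper likewise applies \eqref{eq_VtLip1} to the countable family of rational linear combinations of the $g_k$ off one common $\mu_t\otimes dt$-null set, then lets rational coefficients tend to $\phi_k(t)$ and uses sublinearity and homogeneity of $\lip$ together with $\lip\,g_k(x)\le\Lip\,g_k<\infty$ to pass to the limit. The measure-theoretic bookkeeping you flag as the delicate point is handled in the paper exactly as you propose.
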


\begin{proof}
Let $\{\lambda_k\}_{k=1}^m \subset \Q$, and consider $\sum_{k=1}^m \lambda_k g_k \in \A$. Then, by \eqref{eq_VtLip1}, one has
\[ V_t\left( \sum_{k=1}^m \lambda_k g_k\right) (x) \le\overline{ V}(x,t)\,  \lip \left( \sum_{k=1}^m \lambda_k g_k\right)(x),  \quad \text{for a.e.\ $t\in [0,1]$, $\mu_t$-a.e.\ $x\in E$,}\]
where we may assume that the negligible set of $(x,t)$ above does not depend on $\{\lambda_k\}_{k=1}^m \subset \Q$. Thus for $(x,t)$ outside such set, we may consider sequences $\{\lambda_k^n\}_{n} \subset \Q$ with $\lim_n\lambda_k^n =\phi_k(t)$  for $k \in \{1, \ldots, m\}$, so that
\begin{equation*}
\begin{split} (V_t\pi(\cdot,t))(x) &= \lim_{n\to \infty} V_t\left( \sum_{k=1}^m \lambda_k^n g_k\right) (x)\\
& \le  \overline{ V}(x,t)\, \limsup_{n \to \infty}  \lip \left( \sum_{k=1}^m \lambda_k^n g_k\right)(x) \\
&\le\overline{ V}(x,t)\left( \lip \left( \sum_{k=1}^m \phi_k(t) g_k\right)(x)  +  \limsup_{n \to \infty}  \lip \left( \sum_{k=1}^m (\lambda_k^n-\phi_k(t))  g_k\right)(x)\right) \\
&\le \overline{ V}(x,t)\,  \lip_E\, \pi(x,t) +  \overline{ V}(x,t)\, \limsup_{n \to \infty} \sum_{k=1}^m |\lambda_k^n-\phi_k(t)|  \lip \left(  g_k\right)(x) \\
& \le  \overline{ V}(x,t)\,  \lip_E\, \pi(x,t),
\end{split}
\end{equation*}
concluding the proof.
\end{proof}

\subsection{Extension to a space-time current}

From now on until the end of Section~\ref{sec:general-sp} we assume that $\{\mu_t\}$ and $\{V_t\}$, where $t \in [0,1]$, be a narrowly continuous curve of finite positive Borel measures and a family of time-dependent vector fields respectively, satisfying the relationships~\eqref{eq_VtLeib1},~\eqref{eq_VtLip1},~\eqref{eq:V-bar-integrability} and~\eqref{eq_fl2cont1}.
For every $f\,d\pi\in D^1(E\times[0,1])$ with $\pi \in \A\otimes PC^1([0,1])$, let
\begin{equation}\label{eq_fl2defT'}
    T' (f\,d\pi):=
  \int_0^1 \int_E f(x,s) \left( \left(V_s\pi(\cdot,s)\right)(x)+\partial_t \pi(x,s)\right)\,d\mu_s(x) \,ds.
\end{equation}
The first step consists in extending such bilinear functional to a metric current on $E \times [0,1]$. In fact, the following assertions regarding $T'$ hold true.

\begin{proposition}\label{prop_fl2T'curr1}
Let $E\times [0,1]$ be equipped with any distance equivalent to $d_1$ and either assumption~($\A_1'$) (in particular, the stronger assumption~($\A_1$)) or the assumption~($\A_2'$) together with~($B'$) (in particular, the stronger assumptions~($\A_2$) and~($B$)) hold for $\A$.
Then the  functional $T'$ can be (uniquely) extended
to a metric current (still denoted by $T'$)
$T'\in \M_1(E\times [0,1])$. Furthermore, the extended current $T'$ is normal, with
\[
\partial T' = \mu_1\otimes \delta_1- \mu_0\otimes \delta_0,
\]
and, for $t \in [0,1]$, the subcurrent $T'_t:= T\res \1_{[0,t]}$, defined by
\[     T'_t (f\,d\pi) =
  \int_0^t  \int_E f(x,s) \left( (V_s\pi(\cdot,s))(x) + \partial_t \pi(x,s)\right) \,d\mu_s(x)ds\]
is also normal, with $\partial T'_t = \mu_t\otimes \delta_t- \mu_0\otimes \delta_0$ and
$\mu_{T'_t} \leq
  C(1+\overline{ V})\,d\mu_s(x)\, ds$ for some $C>0$.
\end{proposition}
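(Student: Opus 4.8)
\textbf{Proof proposal for Proposition~\ref{prop_fl2T'curr1}.}

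The plan is to verify the axioms of metric currents (multilinearity, continuity, locality, finite mass) for the functional $T'$ on the dense subspace $f\,d\pi$ with $\pi\in\A\otimes PC^1([0,1])$, obtain the mass bound, and then invoke the abstract extension Lemma~\ref{lm_extnormalcurr1} to pass from this subspace to all Lipschitz forms on $E\times[0,1]$. First I would record the pointwise estimate that does all the work: decompose $d\pi$ in the $t$-variable and the space variables, and use Lemma~\ref{lem:extension-tensor-like-space-time} for the spatial part together with the trivial bound $|\partial_t\pi|\le\Lip\,\pi$ for the time part, to get
\[
\left|(V_s\pi(\cdot,s))(x)+\partial_t\pi(x,s)\right|\le \bigl(1+\overline V(x,t)\bigr)\,\lip\,\pi(x,s)
\]
up to an equivalence constant coming from the choice of distance $d_1$ versus $d_p$. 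Integrating against $|f|\,d\mu_s\,ds$ and using \eqref{eq:V-bar-integrability} gives both finiteness of $T'(f\,d\pi)$ and the candidate mass measure $\mu_{T'}\le C(1+\overline V)\,d\mu_s(x)\,ds$, which is finite; the same estimate on $[0,t]$ gives the bound for $T'_t$. Multilinearity in $(f,\pi)$ is immediate from the definition once one knows (again by Lemma~\ref{lem:extension-tensor-like-space-time}) that $V_s\pi(\cdot,s)$ is well-defined independently of the representation of $\pi$.

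Next I would establish continuity in the metric-currents sense: if $f_m\to f$ pointwise boundedly and $\pi_m\to\pi$ pointwise with uniformly bounded Lipschitz constants, then $T'(f_m\,d\pi_m)\to T'(f\,d\pi)$. For the $f$-part this is dominated convergence against the finite measure $(1+\overline V)\,d\mu_s\,ds$. For the $\pi$-part the delicate point is convergence of $V_s\pi_m(\cdot,s)$; since $V$ is only given as a linear operator on $\A$ with the Leibniz rule and the slope bound \eqref{eq_VtLip1}, and the approximation hypotheses ($\A_1'$) or ($\A_2'$)+($B'$) are exactly tailored to this, I would use those hypotheses together with the weak$^*$-$L^\infty$ upper bound on $\limsup_m\lip\,\tilde\pi_m$ and uniform integrability to pass to the limit; the time-derivative part $\partial_t\pi_m$ needs $PC^1$ regularity, which is why one first reduces via Lemma~\ref{lm_approxtens1} (and Remark~\ref{rem_approxtens2}) from $\Lip_b(E\times[0,1])$ to $\A\otimes PC^1([0,1])$. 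Locality — $T'(f\,d\pi)=0$ when $f$ vanishes on a neighbourhood of $\{d\pi\ne 0\}$, more precisely the Ambrosio--Kirchheim locality axiom — follows from the corresponding locality of $\lip$ and of $V$ on products, using that $\lip\,\pi(x,s)=0$ forces $(V_s\pi(\cdot,s))(x)+\partial_t\pi(x,s)=0$ for $\mu_s\otimes ds$-a.e.\ $(x,s)$ by the pointwise estimate above.

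For normality and the boundary computation, I would take $\pi\in\A\otimes PC^1([0,1])$ and test $\partial T'(\pi):=T'(1\,d\pi)$; splitting $\pi=\sum_k\phi_k(t)g_k(x)$ and integrating the time part by parts on $[0,1]$,
\[
\int_0^1\!\!\int_E \partial_t\pi\,d\mu_s\,ds=\sum_k\Bigl[\phi_k(1)\!\int_E g_k\,d\mu_1-\phi_k(0)\!\int_E g_k\,d\mu_0-\int_0^1\!\phi_k'(s)\!\int_E g_k\,d\mu_s\,ds\Bigr],
\]
while \eqref{eq_fl2cont1} (in the distributional form, with $\phi_k\in PC^1$ as admissible test functions after the usual mollification argument) identifies $\int_0^1\phi_k(s)\frac{d}{ds}\!\int_E g_k\,d\mu_s\,ds=\int_0^1\phi_k(s)\!\int_E V_sg_k\,d\mu_s\,ds$, so the $V_s\pi$ term in $T'$ exactly cancels the $\int_0^1\phi_k'\int g_k\,d\mu_s$ term and one is left with $\partial T'(\pi)=\sum_k(\phi_k(1)\int g_k\,d\mu_1-\phi_k(0)\int g_k\,d\mu_0)$, i.e.\ $\partial T'=\mu_1\otimes\delta_1-\mu_0\otimes\delta_0$; since this boundary has finite mass, $T'$ is normal. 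The identical computation on $[0,t]$, noting that $s\mapsto\int_E g_k\,d\mu_s$ has a representative that is absolutely continuous (indeed, by \eqref{eq_fl2cont1} and \eqref{eq:V-bar-integrability} it has integrable distributional derivative) so the boundary term at $s=t$ makes sense for every $t$, gives $\partial T'_t=\mu_t\otimes\delta_t-\mu_0\otimes\delta_0$ and the stated mass bound for $T'_t$. Finally, since all hypotheses are stable under equivalent renorming of $E\times[0,1]$, the result for $d_1$ (or any equivalent distance) is immediate.

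\textbf{Main obstacle.} The genuinely hard step is the extension itself, i.e.\ showing that the pointwise-defined estimate $|(V_s\pi)(x)|\le\overline V\,\lip_E\pi$ on $\A\otimes PC^1$ is enough to define $T'$ as a metric current on \emph{all} Lipschitz forms on $E\times[0,1]$ and to pass to the limit in the continuity axiom — this is exactly where the abstract Lemma~\ref{lm_extnormalcurr1} on extending bilinear functionals to currents and the carefully weakened density assumptions ($\A_1'$) / ($\A_2'$)+($B'$) are indispensable, because $V$ is not a priori defined outside $\A$ and the naive extension $\partial_t\pi$ is not $\mu_s\otimes ds$-defined for general Lipschitz $\pi$; controlling $\limsup_m\lip\,\tilde\pi_m$ only in the weak$^*$-$L^\infty$ sense (rather than pointwise) is the subtle quantitative ingredient that makes the limit pass through while keeping the mass bound linear in $\overline V$.
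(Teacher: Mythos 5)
Your proposal follows the paper's own proof in all essentials: the pointwise bound $\left|(V_s\pi(\cdot,s))(x)+\partial_t\pi(x,s)\right|\le(1+\overline V)\,\lip\,\pi$ for $\pi\in\A\otimes PC^1([0,1])$ giving the mass estimate, the boundary identity $\partial T'_t=\mu_t\otimes\delta_t-\mu_0\otimes\delta_0$ obtained from the continuity equation~\eqref{eq_fl2cont1} and the (narrowly continuous, absolutely continuous) representative of $s\mapsto\int_E g\,d\mu_s$, and the extension to a normal current via Lemma~\ref{lm_extnormalcurr1} with the density hypotheses supplied through Lemma~\ref{lm_approxtens1} and Remark~\ref{rem_approxtens2}. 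The only hypothesis of Lemma~\ref{lm_extnormalcurr1} you do not spell out is the Leibniz identity $T'_t(f\,d\pi)+T'_t(\pi\,df)=T'_t(1\,d(f\pi))$, which the paper checks in one line from~\eqref{eq_VtLeib1} and the Leibniz rule for $\partial_t$, while your direct verification of the continuity and locality axioms is superfluous since the lemma delivers them.
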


\begin{proof}
We may assume without loss of generality $E\times [0,1]$ to be endowed with the distance $d_1$.

{\sc Locality and mass estimate}.
Let 
$f\in L^1(E\times [0,1], ( 1+ \overline V)\mu_t\otimes\, dt)$,
 $\pi\in \A\otimes PC^1([0,1])$. Then, for all $t\in [0,1]$, one has
\begin{equation*}\label{eq_massestTt'1a}
\begin{aligned}
|T'_t (f\,d\pi)| & \leq
  \int_{E\times (0,t)} |f(x,s)| \left( \overline V(x,s)\lip_E\pi(x,s) +
\left|\partial_t \pi(x,s)\right|\right)\,d\mu_s(x)\,ds\\
&\leq
\int_{E\times (0,t)} |f(x,s)| \left( 1+ \overline V(x,s)\right)\lip\,\pi(x,s)\, d\mu_s(x)\,ds.
\end{aligned}
\end{equation*}

{\sc Boundary mass estimate and Leibniz rule}. 
For $\pi(x,t):= \phi(t) g(x)$ with $g\in \A$ and $\phi\in PC^1([0,1])$, the continuity equation~\eqref{eq_fl2cont1} and  continuity of the map $s \mapsto \int_E g d\mu_s$ give
\begin{equation}\label{eq_boundTt'1}
\begin{aligned}
T'_t (1\,d\pi) & =
  \int_0^t \dot{\phi}(s) \,ds \int_E g\,d\mu_s +
  \int_0^t \phi(s) \,ds \int_E  V_s g\,d\mu_s(x)\\
  &=\phi(t)\int_E g\,d\mu_s- \phi(0)\int_E g\,d\mu_0 \\
  &\qquad\qquad +
  \left\langle \phi,  - \frac{d\,}{ds}\int_E g\,d\mu_s + \int_E V_s g\,d\mu_s \right\rangle\\
  &= \phi(t)\int_E g\,d\mu_s- \phi(0)\int_E g\,d\mu_0 =   \left\langle \pi,  \mu_t\otimes \delta_t- \mu_0\otimes \delta_0 \right\rangle.
\end{aligned}
\end{equation}
By linearity, the identity extends to $\pi\in \A\otimes PC^1([0,1])$. Finally, using Leibniz rule~\eqref{eq_VtLeib1} for $V_t$
and for $\partial_t$, we get that
\[
T'_t(f\,d\pi)+T'_t(\pi\,df)=T'_t(1\,d(f\,\pi))
\]
for every $(f,\pi)\in (\A\otimes PC^1([0,1]))\times (\A\otimes PC^1([0,1]))$.

We invoke now Lemma~\ref{lm_extnormalcurr1} below (with $E\times [0,t]$,
$\A\otimes PC^1([0,1])$,
$T_t'$, 
$\left( 1+ \overline V\right)\mu_s\otimes ds$
and $\mu_t\otimes \delta_t+ \mu_0\otimes \delta_0$ instead of
$E$,
$\A$,
$T$, $\mu$ and $\nu$
respectively), recalling that~($\A_1'$) implies assumption~(i) of this Lemma with these data in view of Lemma~\ref{lm_approxtens1},
while~($\A_2'$) together with~($B'$) similarly implies assumption~(ii) in view of Lemma~\ref{lm_approxtens1}
and Remark~\ref{rem_approxtens2}.
We conclude therefore that $T'_t$ may be uniquely extended by continuity to a current (denoted in the same way).
In particular,
for $t \in [0,1]$ one has $T'_t \leq T'_1$.
Finally,~\eqref{eq_boundTt'1}
implies
\[
\partial T'_t = \mu_t\otimes \delta_t- \mu_0\otimes \delta_0,
\]
that is, $T'_t$ are normal currents with boundary measures as claimed.
\end{proof}

The above proof uses the following result on extension of bilinear functionals to metric currents.

\begin{lemma}\label{lm_extnormalcurr1}
Let $E$ be a complete metric space, $\mu$ and $\nu$ be finite positive Borel measures on $E$ and
$\A\subset \Lip_b(E)$ be an algebra with $\1 \in \A$.
Let
$T\colon
\A \times\A\to \R$ be a bilinear functional for which we retain the notation of currents,
i.e.\ we write $T(f\,d\pi)$ for $T(f,\pi)$. Suppose that
\begin{eqnarray}\label{eq_Text_leib1}
T(f\,d\pi)+ T(\pi\,df)& = & T(1\, d(f\pi)),\\
\label{eq_Text_mass1}
  |T(f\,d\pi)| & \leq & \int_E |f| \, \lip\,\pi\, d\mu,\\
\label{eq_Text_bdmass1}
    |T(\1\,d\pi)| &\leq & \int_E |\pi|\, d\nu
\end{eqnarray}
for every $(f,\pi)\in
\A \times\A$. Assume also that
\begin{itemize}
\item[(i)] either there is a constant $C$ such that, for every  $\pi \in \Lip_b(E)$,  there is a sequence $\{\pi_n\} \subset \A$ of uniformly bounded functions with $\lim_n \pi_n=\pi$ in $L^1(E,\mu+\nu)$ and
\begin{equation}
\label{eq_Liplocconv2a}
\lim_n \lip\,\pi_n\leq C\lip\,\pi
\end{equation}
where the limit in~\eqref{eq_Liplocconv2a} is in the weak$^*$ sense of $L^\infty(E, \mu)$.
\item[(ii)] or there is a constant $C$ such that, for every  $\pi \in \Lip_b(E)$,  there is a sequence $\{\pi_n\} \subset \A$
of uniformly bounded functions with $\lim_n \pi_n=\pi$ in $L^1(E,\mu+\nu)$ and
\begin{equation}\label{eq_Liplocconv2}
\lim_n \Lip\,\pi_n\leq C\Lip\,\pi
\end{equation}
and for every open $U\subset E$ and $f\in \Lip_b(E)$ with $\supp\, f\subset U$ there is a sequence
$\{f_n\}\subset \A$ with $\supp\, f_n\subset U$ such that $\lim_n f_n=f$ in the sense of $L^1(E,\mu)$.
\end{itemize}
Then $T$ may be uniquely extended by continuity to
a normal current (still denoted by $T$) with $\mu_T \leq C\mu$ and $\mu_{\partial T} \le \nu$.
\end{lemma}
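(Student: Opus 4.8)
The plan is to extend $T$ from $\A\times\A$ to $\Lip_b(E)\times\Lip_b(E)$ by density, verify that the extension satisfies the Ambrosio--Kirchheim axioms of a metric $1$-current, and finally read off normality from the boundary mass bound. First I would fix $\pi\in\Lip_b(E)$ and an approximating sequence $\{\pi_n\}\subset\A$ as in (i) or (ii); to define the ``target'' $T(f\,d\pi)$ for $f\in\A$, I would pass to the limit in $n$ in $T(f\,d\pi_n)$. The estimate~\eqref{eq_Text_mass1} gives $|T(f\,d\pi_n)-T(f\,d\pi_m)|\le\int_E|f|\,\lip(\pi_n-\pi_m)\,d\mu$, but the right-hand side need not go to zero, so one cannot argue Cauchyness this way directly. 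Instead I would use~\eqref{eq_Text_leib1} to rewrite $T(f\,d\pi_n)=T(\1\,d(f\pi_n))-T(\pi_n\,df)$; the second term converges since $f$ is fixed in $\A$ and $\pi_n\to\pi$ in $L^1(E,\mu+\nu)$ with $T(\pi_n\,df)$ controlled by $\int_E|\pi_n-\pi_m|\,\lip f\,d\mu$; for the first term, $T(\1\,d(f\pi_n))$ is estimated by~\eqref{eq_Text_bdmass1} via $\int_E|f\pi_n - f\pi_m|\,d\nu\to 0$ (using uniform boundedness of $\{\pi_n\}$ and $f\in\Lip_b$, hence $f\pi_n\to f\pi$ in $L^1(\nu)$). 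This shows the limit exists and is independent of the approximating sequence — the usual diagonal/interlacing trick between two candidate sequences, again via Leibniz, handles independence. One then extends in the first slot: for fixed $\pi\in\Lip_b(E)$ the functional $f\mapsto T(f\,d\pi)$ is defined on $\A$ and bounded by $\int_E|f|\,\lip\pi\,d\mu$ (this passes to the limit by the weak$^*$ / $L^1$ convergence and lower semicontinuity~\eqref{eq_lsclip1}, picking up the constant $C$), so it extends to a bounded functional on $L^1(E,\lip\pi\cdot\mu)\supset\Lip_b(E)$, giving $T(f\,d\pi)$ for all $(f,\pi)\in\Lip_b(E)\times\Lip_b(E)$ with $\mu_T\le C\mu$.

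Next I would verify the three axioms of a metric $1$-current (multilinearity, continuity, locality) plus the finite mass condition. Multilinearity in both arguments and the mass bound $|T(f\,d\pi)|\le C\int_E|f|\,\lip\pi\,d\mu$ are immediate from the construction. Continuity — namely $T(f\,d\pi_k)\to T(f\,d\pi)$ whenever $\pi_k\to\pi$ pointwise with $\sup_k\Lip\pi_k<\infty$ — follows again by the Leibniz decomposition: $T(f\,d\pi_k)=T(\1\,d(f\pi_k))-T(\pi_k\,df)$, the second term converging by dominated convergence in $L^1(\lip f\cdot\mu)$ and the first by the boundary bound $|T(\1\,d(f(\pi_k-\pi)))|\le\int_E|f||\pi_k-\pi|\,d\nu\to 0$; note here I am using that the extended $T$ still satisfies~\eqref{eq_Text_leib1}, which is preserved under the limit since it is a linear identity in data that all converge appropriately. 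For locality — $T(f\,d\pi)=0$ when $\pi$ is constant on a neighborhood of $\{f\ne 0\}$ — I would use condition (ii)'s support-preserving approximation of $f$ (or, under (i), a cutoff argument) together with the Leibniz identity to reduce to $T(\1\,d(\text{const}))=0$, which holds since $d\pi=0$ for locally constant $\pi$ via the mass bound $\lip\pi=0$ there.

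Finally, for normality: $\partial T$ is the functional $g\mapsto T(\1\,dg)$ on $\Lip_b(E)$, and~\eqref{eq_Text_bdmass1} extended by continuity gives $|\partial T(g)|=|T(\1\,dg)|\le\int_E|g|\,d\nu$, so $\partial T$ has finite mass with $\mu_{\partial T}\le\nu$; hence $T$ is normal. I expect the main obstacle to be the very first step — showing the limit defining $T(f\,d\pi)$ exists and is well-defined — precisely because the naive mass estimate~\eqref{eq_Text_mass1} does \emph{not} certify Cauchyness of $\{T(f\,d\pi_n)\}$ (the asymptotic Lipschitz constants need not converge in any strong sense), so one is forced into the Leibniz-rule detour that trades a ``$d\pi_n$'' derivative onto the fixed function $f$ and onto a boundary term controlled by $\nu$. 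Keeping careful track that all the passages to the limit respect the uniform boundedness hypotheses (so that products $f\pi_n$ converge in $L^1(\nu)$ and $L^1(\mu)$) and that the choice between hypotheses (i) and (ii) only affects how one recovers locality and the support-constrained continuity, is the technical heart of the argument.
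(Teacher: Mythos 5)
Your core extension step is the same as the paper's: the trick of using the Leibniz rule to trade a difference in the $\pi$-slot onto a boundary term controlled by $\nu$ (via~\eqref{eq_Text_bdmass1}) and onto terms where the difference sits in the $f$-slot is exactly the paper's estimate~\eqref{eq_Text_mass2}, and your identification of this as the key difficulty is correct. The persistence of the Leibniz identity and of~\eqref{eq_Text_bdmass1} under the limit, the locality argument under~(ii) via the support-preserving approximation of $f$, and the conclusion $\mu_{\partial T}\le\nu$ also match the paper's proof.

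There is, however, a genuine gap in case~(ii). You claim that the pointwise bound $|T(f\,d\pi)|\le C\int_E|f|\,\lip\pi\,d\mu$ ``passes to the limit by the weak$^*$/$L^1$ convergence and lower semicontinuity~\eqref{eq_lsclip1}''. Under~(ii) the approximating sequence controls only the \emph{global} constants, $\limsup_n\Lip\,\pi_n\le C\Lip\,\pi$; nothing prevents $\lip\,\pi_n$ from concentrating where $\lip\,\pi$ is small, and~\eqref{eq_lsclip1} goes the wrong way (it is lower semicontinuity of the Lipschitz constant of the limit, not an upper bound for the limit of the estimates). So in case~(ii) the limiting procedure only yields $|T(f\,d\pi)|\le C\,\Lip\,\pi\int_E|f|\,d\mu$. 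Recovering the local ($\lip$) estimate is precisely the nontrivial part of the paper's argument: after proving locality (as you do), it observes that with the truncated distance $\tilde d=d\wedge 1$ one has $\Lip(\tilde E)=\Lip_b(E)$, so the extended functional is already a normal current over $\tilde E$, and then Remark~\ref{rem_equivalent_Text_mass1} (using the density in $L^1(U,\mu)$ of $\A$-functions supported in open sets $U$) converts the global bound plus locality into the estimate~\eqref{eq_Text_mass1} with $C\mu$, which is unaffected by replacing $d$ by $\tilde d$ since $\lip$ is a local quantity. This is not cosmetic: the local estimate is what makes possible the final extension from $\Lip_b(E)\times\Lip_b(E)$ to $\Lip_b(E)\times\Lip(E)$ via truncations $\pi^k=(-k)\wedge\pi\vee k$, using $\lip(\pi^m-\pi^n)\le\1_{\{m\le|\pi|\le n\}}\lip\,\pi\to 0$; your proposal omits this step entirely, although a metric current must act on forms $f\,d\pi$ with $\pi\in\Lip(E)$ unbounded. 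In short, under~(i) your plan goes through essentially as in the paper, but under~(ii) you are missing both the mechanism producing the $\lip$-estimate and the extension to unbounded $\pi$ that depends on it.
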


Note that~\eqref{eq_Liplocconv2} is weaker than Lip-convergence of $\pi_n$, and strictly stronger than pointwise convergence with bounded Lipschitz constants. The validity of~(i) or~(ii) imply the density of $\A$ both in $L^1(E,\mu)$ and in $L^1(E,\nu)$. It worth also emphasizing that it is for \eqref{eq_Text_bdmass1} to make sense that we require $\1 \in \A$, but it would be unnecessary if $T$ were initially defined over $L^1(E,\mu) \times \A$.

\begin{remark}\label{rem_equivalent_Text_mass1}
Condition~\eqref{eq_Text_mass1}, under the validity of~(ii), is equivalent to the fact that
\begin{equation}\label{eq_mutdpi1}
 |T(f\,d\pi)| \leq \Lip\,(\pi\res U) \int_U |f|\, d\mu \quad \text{for every $U \subset E$ open}
\end{equation}
whenever $f \in \A$, with $\supp f \subset U$.
In fact, if~\eqref{eq_mutdpi1} is valid for some function, then for fixed $\pi \in \A$, the functional $\A \ni f \mapsto T(f\, d\pi)$ is induced by a (unique) measure $\mu_{T\res d\pi} \le \Lip\, (\pi)\,  \mu$, because of \eqref{eq_mutdpi1} with $U=E$ and the density of $\A$ in $L^1(E,\mu)$. For each open $U\subset E$, by \eqref{eq_mutdpi1} and the density of functions in $\A$ with support in $U$ in $L^1(U,\mu)$ entail that the Radon-Nikodym derivative $\frac{d\mu_{T\res d\pi} }{d\mu}$ is $\mu$-a.e.\ smaller than $\Lip \, \pi\res U$ over $U$. Taking a countable family of balls $(U_j)_{j}$ such that, for every $x \in E$, $\varepsilon >0$, there are $U_j$, $U_k$ with $U_j \subset B_\varepsilon(x) \subset U_k$, it follows that $\frac{d\mu_{T\res d\pi} }{d\mu} \le \lip \, \pi$, $\mu$-a.e.\ on $E$. The converse implication is obvious.
\end{remark}

\begin{proof}
Given $(f_1,\pi_1)$, $(f_2,\pi_2) \subset\A\times \A$, the assumptions on $T$ entail
\begin{equation}\label{eq_Text_mass2}
\begin{aligned}
    |T(f_1\,d\pi_1) & -T(f_2\,d\pi_2)|
    \leq  |T(1\,d(f_1\pi_1-f_2\pi_2)|  + |T(\pi_1\,df_1)-T(\pi_2\,df_1)| +|T((f_1-f_2)\,d\pi_2)|\\
    &\leq  \int_E |f_1|\, |\pi_1-\pi_2|\,d\nu  +  \int_E \lip \,f_1|\pi_1-\pi_2|\,d\mu
    + \int_E\lip\, \pi_2 |f_1-f_2|\,d\mu
       \\ &\leq  \int_E |f_1|\, |\pi_1-\pi_2|\,d\nu  +  \Lip \,f_1\int_E |\pi_1-\pi_2|\,d\mu
    + \Lip\, \pi_2 \int_E |f_1-f_2|\,d\mu.
\end{aligned}
\end{equation}
Given $(f,\pi) \in \Lip_b (E)\times \Lip_b(E)$, and $\{(f_n,\pi_n)\}\subset\A\times \A$, $f_n$ and $\pi_n$ being uniformly bounded
with $\lim_n f_n=f$ and $\lim_n \pi_n=\pi$ in $L^1(E,\mu+\nu)$ and with $\lip\, f_n$ and $\lip\, \pi_n$ uniformly bounded in $L^\infty(E,\mu)$, from the above inequality above we get that $\{T(f_n\,d\pi_n)\}$ is convergent. Thus, by either~\eqref{eq_Liplocconv2a} or~\eqref{eq_Liplocconv2}, we obtain that $T$ can be uniquely extended by continuity to $\Lip_b(E)\times \Lip_b (E)$, for which we retain the same notation. The identity~\eqref{eq_Text_leib1} is still valid for $(f,\pi) \in \Lip_b(E)\times\Lip_b(E)$ because $\lip (f_n \pi_n)$ is also uniformly bounded in $L^\infty(E,\mu)$. Inequality~\eqref{eq_Text_bdmass1} also clearly remains valid for $\pi \in \Lip_b(E)$.  The validity of~\eqref{eq_Text_mass1}  with $C\mu$ instead of $\mu$
and $\Lip_b(E)$ instead of $\A$ follows in case~(i) from
\[
| T (f\, d\pi )| = | \lim_n T (f\, d\pi_n )| \le \lim_n  \int_E \lip\,\pi_n |f|\, d\mu \le C  \int_E \lip\, \pi |f|\, d\mu,
\]
for $(f,\pi) \in \Lip_b(E)\times \Lip_b(E)$.

In case~$\A$ satisfies~(ii), to show~\eqref{eq_Text_mass1} for $(f,\pi) \in \Lip_b(E) \times \Lip_b(E)$, we first
prove the following locality property: if $f \in \Lip_b(E)$, $\pi \in \Lip_b(E)$ are such that $\pi$ is constant on some open set $U \supset \supp f$, then $T(f \, d\pi) = 0$.
To this aim, suppose first that $f\in \A$ and $\pi\in \Lip_b(E)$. Since $T(f\,d \1)=0$ by~\eqref{eq_Text_mass1}, and $T$ is linear in each argument,
it is sufficient to assume that $\pi = 0$ on $U$, thus $f \pi = 0$.
Let $\{\pi_n\} \subset \A$ be a sequence of uniformly bounded functions with $\lim_n \pi_n=\pi$ in $L^1(E,\mu+\nu)$, so that
in particular $\lim_n \pi_n(x)=\pi(x)$ for $\mu$-a.e.\ $x\in U$.
One has then
\begin{align*}
|T(f\,d\pi)| &= |T(\pi\,df) | =\lim_n |T(\pi_n\,df) |  \le \lim_n \int_E \lip\, f \, |\pi_n|\,  d \mu =
 \lim_n\int _{U^c} \lip\, f \, |\pi^n|\,  d \mu =0.
\end{align*}
For a generic $f\in \Lip_b(E)$, we approximate $f$ by $f_n\in \A$ as in~(ii), i.e.\ in the sense of $L^1(E,\mu)$ and so that $\supp\, f_n \subset U$. Since $T(f\,d\pi)=\lim_n T(f_n\,d\pi)$ whenever $\lim_n f_n= f$ in $L^1(E,\mu)$ by~\eqref{eq_Text_mass2}, and $\lim_n T(f_n\,d\pi)=0$ as just proven, then $T(f\,d\pi)=0$.
This proves the claimed locality.
We also have for every $\pi\in \Lip_b(E)$ and the sequence $\{\pi_n\}$ as in~(ii) the inequality
\begin{align*}
| T (f\, d\pi )|  = | \lim_n T (f\, d\pi_n )| & \le  \lim_n  \int_E \lip\, \pi_n \, |f|\,  d \mu \\
& \leq
 \lim_n \Lip\,\pi_n \int_E |f|\,  d \mu \leq C \Lip\,\pi \int_E |f|\,  d \mu.
\end{align*}
Thus
at this point, we note that if we equip $E$ with the new distance $\tilde d(x,y):=d(x,y)\wedge 1$ (denoting the resulting metric space $\tilde E$ for further reference), so that $\Lip(\tilde E)=\Lip_b(\tilde E)=\Lip_b(E)$, we already got that the extended functional $T$ is a normal current over $\tilde E$ (with mass measure $\mu_T\leq C\mu$ and the boundary mass measure
$\mu_{\partial T}\leq \nu$). In particular, it satisfies~\eqref{eq_mutdpi1} and hence, by Remark~\ref{rem_equivalent_Text_mass1}, also~\eqref{eq_Text_mass1} (of course, with all Lipschitz constants, both global and local, relative to $\tilde d$). But since local Lipschitz constant $\lip\,\pi$ is the same when calculated with respect to $\tilde d$ or to the original distance $d$, we have that~\eqref{eq_Text_mass1} holds in $E$ with the original distance.

Summing up, we have shown up to now that in both cases~(i) and~(ii) $T$ is uniquely extendable to a bilinear functional over $\Lip_b(E)\times\Lip_b(E)$ satisfying the claimed properties with $C\mu$ instead of $\mu$. To conclude the proof, it remains to extend it to $\Lip_b(E)\times \Lip(E)$ and verify that the extension is a normal current.
To this aim, for an arbitrary $\pi \in \Lip (E)$, setting $\pi^k:= (-k)\wedge \pi \vee k \in \Lip_b(E)$, we get for $m < n$ the estimate
\[ \lip (\pi^m-\pi^n) \leq \1_{\{ m\leq |\pi|\leq n \} } \, \lip\,  \pi\]
so that $\lip\, (\pi^m-\pi^n) \to 0$ pointwise as $(m,n) \to \infty$ and is uniformly bounded by $\Lip \, \pi$. This gives
\begin{align*}
    |T(f\,d\pi^m)-T(f\,d\pi^n)| \leq  C\int_E |f|\, \lip (\pi^m-\pi^n)\,d\mu
\end{align*}
so that that the sequence $\{T(f\,d\pi^k)\}$ is convergent, and we denote its limit by
$T(f\,d\pi)$, defining the extension of $T$
to $\Lip_b(E)\times \Lip (E)$, still denoted by $T$.
One also has that $\lim_k \lip\, \pi^k(x)=\,\lip\, \pi(x)$, and therefore,
\[
|T(f\,d\pi)|=|\lim_k T(f\,d\pi^k)| \leq \lim_k C\int_E |f|\, \lip \pi^k\,d\mu = C\int_E |f|\, \lip \pi\,d\mu,
\]
which gives both the locality property of currents and the mass estimate
\[
|T(f\,d\pi)|\leq C\Lip\,\pi \int_E |f| \,d\mu.
\]
Since also~\eqref{eq_Text_bdmass1} passes to the limit, then the extended $T$ is a normal current as claimed.
\end{proof}

\subsection{Extension of the vector field}\label{subsec_extV1}

Once we know that $T'$ is a current, we may define the ``space-time velocity'' determining the movement of each particle in the flow.  Namely by~\cite[proposition~A.7]{PaoSte14-flow} there is a linear map
$W\colon \Lip(E\times [0,1])\to L^1(E\times [0,1],\mu_t\otimes dt)$
satisfying the Leibniz rule such that
\begin{align} \label{eq:defin-W}
    T'(f\,d\pi)&=\int_0^1\,dt \int_E f(x,t) (W\pi)(x,t)\, d\mu_t(x),\\
    \MM(T') & = \int_0^1\,dt \int_E |W|(x,t) d\mu_t(x),\label{eq:defin-W-abs}
\end{align}
where $|W|$ is defined as the Radon-Nikodym derivative
\[
|W|(x,t):= \frac{d\mu_{T'}}{d(\mu_t\otimes dt)} \le C (1+\overline V) 
\]
for some $C>0$,
so that the map $\pi \mapsto W(\pi)/|W|$ (with the convention $0/0:=0$) is well-defined as a $\mu_{T'}=|W|\,d\mu_t\otimes dt$-measurable vector field in the sense of Weaver.

To keep track of the $E$-component of $W$, mainly for aesthetic purpose we may
extend the vector field $V_t$ defined initially only over the algebra $\A$
to a vector field $\tilde V_t$ defined over some larger class of functions (to be more precise, one has to extend the vector fields over space-time $E\times [0,1]$ and not just pointwise for a.e. instant of time) by the formula
\begin{equation}\label{eq:defin-tilde-V}
\tilde V \pi:= W\pi-\partial_t \pi.
\end{equation}
Note however that $(\tilde V\pi)(x,t)$ cannot a priori be defined for $\mu_t\otimes dt$-a.e. $(x,t)$
for an arbitrary $\pi\in \Lip(E\times [0,1])$, because $\partial_t \pi$ may not exist $\mu_t\otimes dt$-a.e.\ (an example is again that of Remark~\ref{rem:not-weaver}). This means that $\tilde V$ cannot be defined over
the whole $\Lip_b(E\times [0,1])$ but only on some proper subclasses; a reasonable (but of course not the only possible) choice is to define it over
$\Lip(E)\otimes \Lip([0,1])$ or $\Lip(E)\otimes C^1([0,1])$.
In this way
$\tilde V$ satisfies the Leibniz rule and one can write
\[    T' (f\,d\pi)=
  \int_0^1\, dt \int_E f(x,t) \left(\tilde V_t\pi(\cdot,t)(x)+\partial_t \pi(x,t)\right)\,d\mu_t(x),
\]
where $\tilde V_t(x):= \tilde V(x,t)$. Note that $\tilde V_t$ may now be considered as defined on the algebra
$\Lip_b(E)$ for a.e.\ $t\in [0,1]$.
In what follows, in Theorem~\ref{th_fl2T'repr2} we define its ``modulus''  $|\tilde V_t|$
by setting $|\tilde V_t|(x) :=(|W|(x,t)^2-1)^{1/2}$, i.e.\ so that $|W|= (1+|\tilde V_t|^2)^{1/2}$.
A useful alternative 
characterization of
$|\tilde V_t|$ is provided by Proposition~\ref{prop_modVelest1a}.

\subsection{Acyclicity and direction along time}\label{ssec_acycl1}

Our next step is to study some properties of the current $T'$, in particular with respect to the ``time'' component $t \in [0,1]$. To state them, let us first notice a general decomposition for currents $S\in \M_1(E\times [0,1])$. If we let
$\tau\colon E\times[0,1]\to \R$,  $\tau(t):=t$, then one has
\[
|S(f\,d\tau)|\leq \int_{E\times [0,1]} |f|\, d\mu_S
\]
for every $f\in L^1(E\times [0,1], \mu_S)$,
since $\Lip\, \tau=1$, thus, for some $a_S\in L^1(E\times [0,1], \mu_S)$, $|a_S(x,t)|\leq 1$ for $\mu_S$-a.e.\ $(x,t)$, we have
\[
S(f\,d\tau)=\int_{E\times [0,1]} f a_S\, d\mu_S.
\]
Let now
\[
S^{\leq}:= S\res \{a_S\leq 0\}, \qquad S^{>}:= S\res \{a_S> 0\}.
\]

\begin{remark}\label{rem_leqcommres1}
Since $a_{S\res B} = a_S\cdot \1_B$, then
$S^{\leq}\res B = (S\res B)^\leq$ and analogously  $S^{>}\res B= (S\res B)^>$
for all Borel sets $B \subset E\times [0,1]$.
\end{remark}

In this section, we show the following result which is valid for generic currents over space-time
such that the underlying vector field has a strictly positive component in time direction; this condition is
clearly satisfied also by $T'$ defined by \eqref{eq_fl2defT'}.

\begin{proposition}\label{prop_fl2T'Sleq0}
Let $T\in \M_1(E\times [0,1])$
satisfy $T^\leq =0$ and let $S\in \M_1(E\times [0,1])$ be such that $S\leq T$. Then $S^\leq =0$.
\end{proposition}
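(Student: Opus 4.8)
The plan is to use the decomposition $S = S^{\le} + S^{>}$ coming from the function $a_S \in L^1(E\times[0,1],\mu_S)$ with $|a_S| \le 1$, and to show that the assumption $S \le T$ forces $\mu_{S^{\le}} = 0$. The key point is that $\mu_{S^{\le}}$ is absolutely continuous with respect to $\mu_T$ (since $\mu_S \le \mu_T$ and $S^{\le}$ is a restriction of $S$), so it suffices to compare the ``time densities'' $a_S$ and $a_T$ on the support of $S^{\le}$. First I would record that $\mu_S \le \mu_T$ (this is what $S \le T$ means), and that the linear functional $f \mapsto S(f\,d\tau)$ is dominated by $\mu_S \le \mu_T$, whence $a_S$ can be regarded as defined $\mu_T$-a.e.\ (extending by $0$ off a set carrying $\mu_S$); similarly $a_T$ is defined $\mu_T$-a.e.\ with $|a_T| \le 1$.

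The crucial step is to show that $S \le T$ implies $a_S = a_T$ holds $\mu_S$-a.e. Since $S \le T$ also gives $(T - S) \le T$, both $S(f\,d\tau) = \int f a_S\,d\mu_S$ and $(T-S)(f\,d\tau) = \int f a_{T-S}\,d\mu_{T-S}$ are controlled by the respective mass measures, and additivity $S(f\,d\tau) + (T-S)(f\,d\tau) = T(f\,d\tau)$ forces
\[
a_S \frac{d\mu_S}{d\mu_T} + a_{T-S}\frac{d\mu_{T-S}}{d\mu_T} = a_T \qquad \mu_T\text{-a.e.}
\]
Writing $\rho := d\mu_S/d\mu_T$ so that $d\mu_{T-S}/d\mu_T = 1-\rho$ with $0 \le \rho \le 1$, and using $|a_S|, |a_{T-S}|, |a_T| \le 1$, the identity $a_S\rho + a_{T-S}(1-\rho) = a_T$ combined with the hypothesis $a_T > 0$ $\mu_T$-a.e.\ (i.e.\ $T^{\le} = 0$) is not yet enough by itself; what we really need is that on the set $\{\rho > 0\}$ one actually has $a_S = a_T$. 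This in turn should follow from the fact that $S \le T$ is a much stronger relation than mere domination of masses — it is the order relation on currents, equivalent to $\mu_{S\res d\pi}$ being dominated by $\mu_{T\res d\pi}$ for every $\pi$, which pins down the vector field of $S$ to agree $\mu_S$-a.e.\ with that of $T$ (this is the standard fact that a subcurrent has the same underlying Weaver derivation, cf.\ the normal/metric current calculus used throughout the paper; applied with $\pi = \tau$ it gives precisely $a_S = a_T$ $\mu_S$-a.e.).

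Granting $a_S = a_T$ $\mu_S$-a.e., the conclusion is immediate: the set $\{a_S \le 0\}$ on which $S^{\le}$ is concentrated satisfies $\mu_S(\{a_S \le 0\}) = \mu_S(\{a_T \le 0\}) = 0$ because $a_T > 0$ $\mu_T$-a.e.\ and $\mu_S \ll \mu_T$. Hence $\mu_{S^{\le}} = \mu_S\res\{a_S \le 0\} = 0$, so $S^{\le} = 0$ as a current.

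The main obstacle I expect is justifying rigorously that $S \le T$ forces the time-components to agree, i.e.\ $a_S = a_T$ $\mu_S$-a.e., rather than merely $a_S\rho + a_{T-S}(1-\rho) = a_T$. One clean way to see it: $S \le T$ means $\mu_{S \res d\pi} \le \mu_{T \res d\pi}$ for all $\pi$ (equivalently $T - S$ is again a positive current), and applying this to $\pi = \tau$ gives $|a_S|\,\rho\,d\mu_T = \mu_{S\res d\tau} \le \mu_{T\res d\tau} = |a_T|\,d\mu_T$ together with the analogous bound for $T-S$; feeding these two inequalities back into the equality $a_S\rho + a_{T-S}(1-\rho) = a_T$ and using the triangle inequality $|a_S|\rho + |a_{T-S}|(1-\rho) \ge |a_T|$ forces equality in the triangle inequality, which (since $\rho, 1-\rho \ge 0$) means $a_S$, $a_{T-S}$ and $a_T$ all have the same sign wherever $\rho \in (0,1)$, and a short argument on the two extreme cases $\rho = 1$ and $\rho = 0$ completes it. In particular $a_S$ has the same sign as $a_T$ on $\{\rho > 0\}$, and since $a_T > 0$ there, $a_S > 0$ $\mu_S$-a.e., which is exactly $S^{\le} = 0$.
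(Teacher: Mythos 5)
Your proposal has a fundamental gap: nowhere does it use the fact that $E\times[0,1]$ is equipped with the distance $d_2$, yet the statement is genuinely \emph{false} for other choices of product distance. The paper points this out explicitly in Remark~\ref{rem:product-cycles}: if $E:=\R^2$, $\mu_t$ is uniform on $\mathbb{S}^1$ and $V:=\partial_\theta$, then under $d_1$ the ``cylindrical'' cycle $\ld\mathbb{S}^1\rd$ (with purely spatial vector field $(V,0)$, so $a_S\equiv 0$) is a subcurrent of $T'$, and for it $S^\leq=S\neq 0$. A distance-blind argument therefore cannot be correct.

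The specific step where this surfaces is your ``crucial step'': the claim that $S\leq T$ forces the Weaver derivations of $S$ and $T$ to agree $\mu_S$-a.e.\ (hence $a_S=a_T$). This is \emph{not} a standard fact in a general metric space --- it is the Euclidean/strictly-convex intuition. In a strictly convex Banach space, writing $\mu_S=\rho\mu_T$ with $0\le\rho\le 1$ and unit vector fields $V_S,V_T$, the identity $\mu_T=\mu_S+\mu_{T-S}$ gives $\|V_T-\rho V_S\|=1-\rho=\|V_T\|-\|\rho V_S\|$, and strict convexity then forces $V_S=V_T$ on $\{\rho>0\}$. But $\ell^\infty$ (into which the paper isometrically embeds $E$) is not strictly convex, and neither is $E\times[0,1]$ with $d_2$ when $E$ is not, so equality in the triangle inequality does not force colinearity. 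Your fallback --- that $S\leq T$ ``means $\mu_{S\res d\pi}\leq\mu_{T\res d\pi}$ for all $\pi$'' --- is not a known characterization of the subcurrent relation (the relation is $\mu_T=\mu_S+\mu_{T-S}$); and even granting that bound, the two inequalities $|a_S|\rho\leq a_T$ and $|a_{T-S}|(1-\rho)\leq a_T$ do not sum to $|a_S|\rho+|a_{T-S}|(1-\rho)\leq a_T$, so you do not actually get equality in the triangle inequality.

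What makes the paper's proof work, and what your sketch omits entirely, is that the $d_2$ structure supplies a \emph{partial} strict convexity in the two-dimensional plane spanned by the spatial direction and the time direction, provided the spatial speed is bounded. The paper exploits this quantitatively: it cuts by the sets $\{|W|\leq\lambda\}$ to bound the spatial component, pushes forward under finite-rank projections $P_n$ (Lemma~\ref{lm_muPnapprox1}) to reduce to finite dimensions where currents admit pointwise vector-field representations, writes $T_n=(V_n,1)\wedge\mu_n$ and $D_n=(\alpha_n,\beta_n)\wedge\mu_n$ with $\beta_n\le 0$, and then runs the estimate
\[
\bigl((1-\beta_n)^2+\|V_n-\alpha_n\|^2\bigr)^{1/2}\geq \bigl(1+\|V_n\|^2\bigr)^{1/2}-\frac{\beta_n+\|\alpha_n\|\,\|V_n\|}{\bigl(1+\|V_n\|^2\bigr)^{1/2}}
\]
(valid precisely because the $\R^2$-valued pair $(\|\cdot\|,|t|)$ carries the Euclidean norm under $d_2$), ending with a bound $\cos\gamma_n\leq\sqrt{\lambda^2-1}/\lambda<1$ on the relevant angle that forces $\MM(D_n)\to 0$. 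None of this geometry appears in your proposal, and without it the statement you are trying to prove is not even true.
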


\begin{remark}\label{rem_forward1}
In the language of derivations, the condition $T^\leq =0$ means that $(W\tau)(x,t)>0$ for $\mu$-a.e.\ $(x,t)\in E\times [0,1]$, where $W\colon \Lip(E\times [0,1])\to L^1(E\times [0,1], \mu)$ and the nonnegative Borel measure $\mu$ over $E\times [0,1]$ are such that
\[
T(f\,d\pi)= \int_{E\times [0,1]} f W\pi \, d\mu\quad \mbox{ for all $f\,d\pi\in D^1(E\times [0,1])$}.
\]
Intuitively, this is an intrinsic way of expressing the notion of a vector field (or a current) ``directed forward in time''.
\end{remark}

\begin{remark}\label{rm_T'acycl1}
Of course, the conditions of the above proposition are satisfied for $T'$ (with $\mu:=\mu_t\otimes dt$), because
in this case, recalling that $V_t\tau=0$ by~\eqref{eq_VtLip1}, we get
\[
T'(f\,d\tau)= \int_{E\times [0,1]} f W\tau \, d\mu_t\otimes dt = \int_{E\times [0,1]} f\, d\mu_t\otimes dt
\]
for all $f\in L^1(E\times [0,1], \mu_t\otimes dt)$, and hence $W\tau=1$ $\mu_t\otimes dt$-a.e.
Thus the conclusion holds for $T'$ and also for $T'_t$,
since $S\leq T'_t$ implies $S\leq T'$ in view of
$T'_t\leq T'$.
\end{remark}

\begin{remark}\label{rem:product-cycles}
The intuition behind Proposition~\ref{prop_fl2T'Sleq0} becomes clear, if we think that $S^\leq$ should correspond to the subcurrent represented as a superposition of curves which go backward in time (i.e., whose $t$-component is nonincreasing), while $T'_t$ by construction should be a superposition of curves whose  $t$-component is strictly increasing. Unfortunately, this intuition does not take into account that the notion of subcurrent depends on the choice of the distance and, if we endow the product space $E \times[0, 1]$ with a distance different from $d_2$, we may obtain subcurrents of $T'$ that are cycles. As an explicit example, let $E := \R^2$, $\mu_t$ be a uniform distribution on the circle $\mathbb{S}^1$, which we may regard as a solution of the continuity equation with respect to a constant unit vector field $V := \partial_\theta$. If we endow the space $E\times [0,1]$ with the distance $d_1$,
then the cycle given by the ``cylindrical'' current  $\ld \mathbb{S}^1\rd$ seen as a one-dimensional current
over $E \times [0,1]$ (i.e.\ corresponding to the vector field $(V,0)$) is a subcurrent of $T'$.
\end{remark}

In view of Remark~\ref{rem:product-cycles}  our proof must take into account the fact that we endow $E \times[0,1]$ with the distance $d_2$. This is done in two steps: first, we argue on finite dimensional approximations of $E$, induced by the isometric embedding of $E$ into $\ell^\infty$, and then pass to the limit.

The following lemmata will be useful.

\begin{lemma}\label{lm_PnSleq1}
Let $(E,d)$ and  $(F,\rho)$ be metric spaces, let $Q\in \M_1(E \times [0,1])$,
$\phi \colon E \to F$ be Lipschitz and define $\Phi \colon E \times [0,1] \to F \times [0,1]$ by $\Phi(x,t) := (\phi(x), t)$. Then, one has
\[(\Phi_{\#}(Q^\leq))^\leq = \Phi_{\#}(Q^\leq).
\]
\end{lemma}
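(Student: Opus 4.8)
\textbf{Proof plan for Lemma~\ref{lm_PnSleq1}.}

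The plan is to track how the ``time-direction coefficient'' $a_Q$ behaves under the pushforward by a map that does not touch the time variable. The key observation is that $\Phi$ leaves the function $\tau$ invariant in the sense that $\tau\circ\Phi=\tau$, so the pullback $\Phi^\#(d\tau)=d\tau$; hence testing a pushed-forward current against $d\tau$ reduces to testing the original current against $d\tau$. Concretely, for any current $R\in\M_1(E\times[0,1])$ and any Lipschitz $f$ on $F\times[0,1]$ one has $(\Phi_\#R)(f\,d\tau)=R((f\circ\Phi)\,d(\tau\circ\Phi))=R((f\circ\Phi)\,d\tau)$, which by the definition of $a_R$ equals $\int (f\circ\Phi)\,a_R\,d\mu_R$. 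On the other hand, the same quantity equals $\int f\,a_{\Phi_\#R}\,d\mu_{\Phi_\#R}$ by definition of $a_{\Phi_\#R}$. Since $\mu_{\Phi_\#R}\le\Phi_\#\mu_R$ (mass does not increase under pushforward), comparing these two representations shows that the set where $a_{\Phi_\#R}$ is nonpositive is (up to $\mu_{\Phi_\#R}$-null sets) contained in $\Phi(\{a_R\le 0\})$; more precisely, $a_{\Phi_\#R}\le 0$ forces $(f\circ\Phi)a_R$ to integrate to something $\le 0$ for all suitable test $f$, which via a disintegration of $\mu_R$ over the fibers of $\Phi$ pins down the sign of $a_{\Phi_\#R}$ on $\Phi$-images in terms of fiberwise averages of $a_R$.

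Applying this with $R:=Q^\leq$: by definition $a_{Q^\leq}=a_Q\cdot\1_{\{a_Q\le 0\}}\le 0$ everywhere on $E\times[0,1]$ (using Remark~\ref{rem_leqcommres1}, which gives $a_{Q\res B}=a_Q\cdot\1_B$). Therefore for every nonnegative test function $f$ on $F\times[0,1]$,
\[
(\Phi_\#(Q^\leq))(f\,d\tau)=\int_{E\times[0,1]} (f\circ\Phi)\,a_{Q^\leq}\,d\mu_{Q^\leq}\le 0.
\]
Writing the left-hand side as $\int f\,a_{\Phi_\#(Q^\leq)}\,d\mu_{\Phi_\#(Q^\leq)}$ and letting $f$ range over nonnegative functions concentrated on the Borel set $\{a_{\Phi_\#(Q^\leq)}>0\}$, we conclude that $\mu_{\Phi_\#(Q^\leq)}(\{a_{\Phi_\#(Q^\leq)}>0\})=0$, i.e.\ $a_{\Phi_\#(Q^\leq)}\le 0$ $\mu_{\Phi_\#(Q^\leq)}$-a.e. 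But $(\Phi_\#(Q^\leq))^\leq$ is by definition $\Phi_\#(Q^\leq)\res\{a_{\Phi_\#(Q^\leq)}\le 0\}$, and restricting a current to a set of full mass measure leaves it unchanged; hence $(\Phi_\#(Q^\leq))^\leq=\Phi_\#(Q^\leq)$, as claimed.

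The main technical point to handle carefully is the justification that $a_{\Phi_\#(Q^\leq)}\le 0$ a.e.\ follows from the sign of the integral against $d\tau$ tested by arbitrary nonnegative $f$: one must ensure that the measure $\mu_{\Phi_\#(Q^\leq)}$ is such that its restriction to the (Borel) set $\{a_{\Phi_\#(Q^\leq)}>0\}$ can itself be realized as the mass measure of a subcurrent, so that choosing $f$ adapted to this set is legitimate; this is exactly the content of the locality/restriction structure of metric $1$-currents used freely in this section (as in Remark~\ref{rem_leqcommres1}), and requires $F\times[0,1]$ to carry enough Lipschitz functions to separate the relevant sets — which holds, so no genuine obstruction arises. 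A secondary bookkeeping issue is that $f\circ\Phi$ is only Lipschitz (not bounded-below-by-a-fixed-constant), but since $a_{Q^\leq}\le 0$ pointwise the inequality direction is preserved for every nonnegative Lipschitz $f$ by monotone approximation, so no sign is lost.
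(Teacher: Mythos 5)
Your argument is correct and is essentially the paper's proof: both rest on $\tau\circ\Phi=\tau$, which yields the identity $a_{\Phi_\#R}\,\mu_{\Phi_\#R}=\Phi_\#(a_R\,\mu_R)$ (you obtain it by testing against nonnegative $f$, the paper states it directly), and then nonpositivity of $a_{Q^\leq}$ transfers to $a_{\Phi_\#(Q^\leq)}$. The ``technical point'' you flag is not an issue: a finite signed Borel measure on a metric space that integrates nonpositively against all nonnegative bounded Lipschitz functions is nonpositive, so no subcurrent realization or disintegration over the fibers of $\Phi$ is needed.
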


\begin{proof}
In fact
\begin{align*}
\int_{F\times [0,1]} f a_{\Phi_{\#}Q}\, d\mu_{\Phi_{\#}Q} & = \Phi_{\#}Q (f\,d\tau) = Q(f\circ \Phi\, d\tau\circ\Phi)\\
& = Q(f\circ \Phi\, d\tau)\qquad\mbox{ because $\tau\circ \Phi=\tau$}\\
&= \int_{E\times [0,1]} f\circ \Phi\, a_Q\, d\mu_Q = \int_{F\times [0,1]} f \, d \Phi_{\#} (a_Q\mu_Q)
\end{align*}
for every $f\in C_b(F\times [0,1])$,
which means
\begin{equation}\label{eq_PnSleq2}
a_{\Phi_{\#}Q}\mu_{\Phi_{\#}Q} = \Phi_{\#} (a_Q\mu_Q).
\end{equation}
Since~\eqref{eq_PnSleq2} is valid for every $Q\in \M_1(E\times [0,1])$, then we may apply it
with $Q^\leq$ instead of $Q$, thus getting
\[
a_{\Phi_{\#}(Q^\leq) }\mu_{\Phi_{\#}(Q^\leq)} = \Phi_{\#} (a_{Q^\leq}\mu_{Q^\leq}) \leq 0,
\]
because $a_{Q^\leq}(x)\leq 0$ for $\mu_Q$-a.e.\ $x$ and hence for $\mu_{Q^\leq}$-a.e.\ $x$.
This means
\[
a_{\Phi_{\#}(Q^\leq) }(x)\leq 0
\]
 for  $\mu_{\Phi_{\#}(Q^\leq)}$-a.e. $x$, thus showing the claim.
\end{proof}

\begin{lemma}\label{lm_muPnapprox1}
Let $C\subset \ell^\infty$ be the closure of some $\sigma$-compact set.
There is a sequence of finite rank linear projection operators $P_n\colon \ell^\infty\to \ell^\infty$
of unit norm for all $n\in\N$ and $\lim_n\|P_n x-x\|=0$
for all $x\in C$.
Therefore,
for every finite Borel measure $\mu$ over $\ell^\infty$ with $\supp\mu\subset C$ one has
$P_{n\#}\mu\rightharpoonup \mu$ in the narrow sense of measures as $n\to \infty$.
\end{lemma}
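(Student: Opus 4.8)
The plan is to construct the projections $P_n$ explicitly from the coordinate structure of $\ell^\infty$ and then deduce narrow convergence of pushforwards from a dominated-convergence argument. First I would recall that a $\sigma$-compact set is a countable union of compact sets $K_j$, so $C = \overline{\bigcup_j K_j}$; by a diagonal argument it suffices to produce, for each $j$ and each $\eps>0$, a finite-rank projection $P$ with $\|P\|\le 1$ and $\sup_{x\in K_j}\|Px-x\|<\eps$, and then to extract a single sequence $P_n$ whose approximation error tends to $0$ uniformly on each $K_j$ (hence pointwise on $\bigcup_j K_j$, hence on its closure $C$ by a standard $3\eps$ estimate using $\|P_n\|\le 1$). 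The natural candidate is the truncation operator $P_n x := (x_1,\dots,x_n,0,0,\dots)$, which is linear, has rank $n$, and satisfies $\|P_n\|=1$ in $\ell^\infty$. The point to check is that $\|P_n x - x\| = \sup_{k>n}|x_k| \to 0$ uniformly on a compact set $K\subset\ell^\infty$: this follows because a compact subset of $\ell^\infty$ has an equicontinuous/uniformly-small-tail characterization — more directly, compactness gives a finite $\eps$-net $\{y^{(1)},\dots,y^{(N)}\}$, each $y^{(i)}$ has $\sup_{k>n_i}|y^{(i)}_k|<\eps$ for some $n_i$, and for $n\ge\max_i n_i$ one has $\sup_{k>n}|x_k|\le \|x-y^{(i)}\|+\sup_{k>n}|y^{(i)}_k| < 2\eps$ choosing $y^{(i)}$ within $\eps$ of $x$. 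Thus the single sequence $P_n$ (plain truncation) already works for every $\sigma$-compact $C$, and a fortiori for its closure.

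For the second assertion, fix a finite Borel measure $\mu$ with $\supp\mu\subset C$. To show $P_{n\#}\mu\rightharpoonup\mu$ narrowly I would test against an arbitrary $g\in C_b(\ell^\infty)$ and write
\[
\int g\, d(P_{n\#}\mu) - \int g\, d\mu = \int_{\ell^\infty} \bigl(g(P_n x) - g(x)\bigr)\, d\mu(x).
\]
Since $\|P_n x - x\|\to 0$ for every $x\in C$ and $g$ is continuous, the integrand tends to $0$ $\mu$-a.e.; since it is bounded by $2\|g\|_\infty$ and $\mu$ is finite, dominated convergence gives the claim. (Here one should note that $P_n x\in\ell^\infty$ always, so $g(P_n x)$ is well-defined even though $P_n x$ need not lie in $C$; only $x\in\supp\mu$ is needed for the pointwise convergence of the error.)

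The only genuine subtlety — and the step I would flag as the main obstacle — is making the "diagonal extraction" and the uniform-tail estimate fully rigorous, i.e. verifying that plain truncation suffices uniformly on each compact piece without any separability or equicontinuity hypothesis beyond compactness in the norm topology of $\ell^\infty$; once one observes that norm-compact sets in $\ell^\infty$ have uniformly small tails (via the finite $\eps$-net argument above), everything else is routine. I would therefore organize the proof as: (1) reduce to compact sets and fix the sequence $P_n$ of coordinate truncations, proving $\|P_n\|=1$; (2) prove uniform tail decay on each compact $K_j$ and upgrade to pointwise convergence on $C$ using $\|P_n\|\le 1$ and a $3\eps$-argument; (3) conclude narrow convergence of $P_{n\#}\mu$ by dominated convergence as above.
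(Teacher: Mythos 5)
Your proposed projections fail: the coordinate truncations $P_n x := (x_1,\dots,x_n,0,0,\dots)$ do \emph{not} converge strongly to the identity on compact subsets of $\ell^\infty$, and the finite $\eps$-net argument you sketch contains a false step. You claim that each net element $y^{(i)} \in \ell^\infty$ satisfies $\sup_{k>n_i}|y^{(i)}_k| < \eps$ for some $n_i$, but this holds only for $y^{(i)} \in c_0$, not for an arbitrary element of $\ell^\infty$. The simplest counterexample to your whole construction is $C := \{(1,1,1,\dots)\}$, a compact singleton (hence $\sigma$-compact and closed): for this $x$ one has $\|P_n x - x\|_\infty = \sup_{k>n}|x_k| = 1$ for every $n$, so $P_n x \not\to x$. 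More generally, truncation recovers $x$ in the norm of $\ell^\infty$ if and only if $x \in c_0$, and norm-compact subsets of $\ell^\infty$ need not lie in $c_0$. The equicontinuity/uniform-small-tail characterization of compactness you are thinking of is a theorem about $c_0$ (or $\ell^p$, $p<\infty$), not about $\ell^\infty$.

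This is the one genuinely nontrivial point of the lemma, and the paper does not use truncation. It instead invokes \cite[lemma~5.7]{PaoSte11-acycl}, which produces, for each compact $K \subset \ell^\infty$ and each tolerance $1/n$, a finite-rank \emph{linear} projection $P_n$ with $\|P_n\| \le 1$ and $\sup_{x\in K}\|P_n x - x\| < 1/n$; this construction depends on $K$ (unlike truncation) and exploits special structure of $\ell^\infty$ (essentially its $1$-injectivity, or a coordinate-grouping/partition argument built from the $\eps$-net), rather than merely zeroing out tails. Once that black box is granted, the remainder of your argument is sound and matches the paper: the extraction of a single sequence working on an exhausting family $\{K_j\}$, the $3\eps$-upgrade from $\bigcup_j K_j$ to its closure $C$ using $\|P_n\|\le 1$, and the dominated-convergence deduction of $P_{n\#}\mu \rightharpoonup \mu$ are all correct and are indeed what the paper does (the last step is left implicit there under ``Therefore''). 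To repair your proof you would need to replace step (1) — construction and verification of the $P_n$ on compacta — with an appeal to (or a proof of) the cited lemma.
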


\begin{proof}
One has that $\bigcup_{\nu=1}^\infty K_\nu$ is dense in $C$,
where $\{K_\nu\}$ is an increasing sequence of compact subsets of $\ell^\infty$.
By lemma~5.7 from~\cite{PaoSte11-acycl} for every $\nu\in\N$ there is a
a finite rank linear projection operator $P_n$ with norm $| \| P_n \| | \leq 1$ such that $\|P_n x-x\| <1/n$
for all $x\in K_n$. Thus for every $x\in \cup_{\nu=1}^\infty K_\nu$ there is a $\nu=\nu(x)\in \N$ such that
$\|P_n x-x\| <1/n$ for all $n\geq \nu$.
Finally, if $x\in \overline{\cup_{\nu=1}^\infty K_\nu}$, then taking a sequence
$\{x_k\}\subset \cup_{\nu=1}^\infty K_\nu$ with $\lim_k x_k=x$, we get
\begin{align*}
    \|P_n x -x\| & \leq \|P_n x -P_n x_k \| + \|P_n x_k -x_k\| +\| x_k-x\|\leq  \|P_n x_k -x_k\| + 2\| x_k-x\|.
\end{align*}
For an arbitrary $\varepsilon>0$ let $k=k(\varepsilon)\in \N$ be such that
$\| x_k-x\|\leq \varepsilon/3$, and let then $\nu=\nu(k)\in \N$ such that $\|P_n x_k -x_k\|\leq \varepsilon/3$
for all $n\geq \nu$. Thus one has $\|P_n x -x\|\leq \varepsilon$ for all $n\geq \nu$,
which concludes the proof.
\end{proof}

\begin{proof}[Proof of Proposition~\ref{prop_fl2T'Sleq0}]
We may assume without loss of generality that $(W\tau)(x,t)=1$ for $\mu$-a.e. $(x,t)\in E\times [0,1]$
(otherwise, substitute the map $W$ with $\pi\in \Lip(E)\mapsto W\pi/W\tau$ and $\mu$ by $W\tau\,\mu$).
As usual, we define
$|W|$ as the Radon-Nikodym derivative
$|W|(x,t):= \frac{d\mu_{T}}{d\mu}$, and observe that
\[
\int_{E\times [0,1]} f\, d\mu = \int_{E\times [0,1]} f W\tau \, d\mu = T(f\,d\tau) \leq \int_{E\times [0,1]} f\, d\mu_T
= \int_{E\times [0,1]} f|W|\, d\mu
\]
for every $f\in \L^\infty(E\times [0,1],\mu)$, $f\geq 0$, and thus $\mu$-a.e.\ one has $|W|\geq 1$.

Up to embedding $E$ isometrically into $\ell^\infty$, we may assume without loss of generality that $E=\ell^\infty$. First, we notice that it is sufficient to show that
\begin{equation}\label{eq_resAlam1}
\MM((S\res \{|W|\leq\lambda  \})^\leq) =0, \quad \text{ for all $\lambda \geq 0$. }
\end{equation}
Indeed, by Remark~\ref{rem_leqcommres1}, we have $(S\res \{|W|\leq\lambda  \})^\leq = S^\leq \res \{|W|\leq\lambda  \}$
for every $\lambda \geq 0$, and hence
\[
\MM(S^\leq)=\sup_{\lambda > 0} \MM(S^\leq \res \{|W|\leq\lambda  \})
=\sup_{\lambda\geq 0} \MM((S\res \{|W|\leq\lambda  \})^\leq) =0.
\]

Lemma~\ref{lm_muPnapprox1} provides a sequence of linear operators $P_n\colon \ell^\infty\to \ell^\infty$ of unit norm and finite dimensional range $E_n\subset \ell^\infty$,
satisfying $P_n x\to x$ as $n\to \infty$, for every $x\in \supp T'$, which we naturally extend to operators on $\ell^\infty \times [0,1]$ by $P_n(x,t) := (P_n(x), t)$
(denoted by the same symbol). We notice that
\[
\MM(P_{n\#}(S\res \{|W|\leq\lambda  \})^\leq)\nearrow \MM(S\res \{|W|\leq\lambda  \})^\leq)
\]
as $n\to +\infty$, so \eqref{eq_resAlam1} follows once we show
\begin{equation}\label{eq_resAlam2}
\limsup_n \MM(D_n) =0, \mbox{ where } D_n :=P_{n\#}((S\res \{|W|\leq\lambda  \})^\leq)
\end{equation}

Similarly, let 
 $T_n:=P_{n\#}(T'\res \{|W|\leq\lambda  \})$.
 Clearly,
\begin{equation}\label{eq_SleqT'1}
(S\res \{|W|\leq\lambda  \})^\leq \leq S\res \{|W|\leq\lambda  \} \leq T'\res \{|W|\leq\lambda  \},
\end{equation}
and thus, although
 $D_n$ is not necessarily a subcurrent of $T_n$, still for $n \to \infty$
 we have
\begin{equation}\label{eq_quasisubS1}
\MM (D_n) +\MM(T_n-D_n)  = \MM(T_n) + o(1),
\end{equation}
because $\MM(P_{n\#}Q)\nearrow \MM(Q)$ for every $Q\in \M_1(E\times [0,1])$, $Q\leq T$.
Finally, we introduce the measure
\[
\mu_n := P_{n\#} (\mu \res \{|W|\leq\lambda \}).
\]
We claim now that we can represent
\begin{equation}\label{claim_T}  T_n= (V_n \circ P_n, 1)\wedge \mu_n, \, \text{ with $V_n\colon E_n\times [0,1]\to E_n$ Borel, $\| V_n\|\leq \lambda$ $\mu_n$-a.e., and}
\end{equation}
\begin{equation}\label{claim-D} D_n= (\alpha_n \circ P_n, \beta_n)\wedge \mu_n, \, \text{with $\alpha_n \colon E_n\times [0,1]\to E_n$ and $\beta_n\colon E_n\times [0,1]\to (-\infty, 0]$,}
\end{equation}
i.e., $\beta_n(x,t) \le 0$ for $\mu_n$-a.e. $(x,t)$
(note that throughout this proof we denote by $\|\cdot\|$ both the norm in $\ell^\infty$ and the induced norms in all its subspaces $E_n$).
Indeed, we have
\begin{equation}\label{eq_fmuTn1}
\begin{aligned}
\mu_{T_n} & \leq P_{n\#} (\mu_{T\res \{|W|\leq\lambda \}}) \leq
P_{n\#} \left(\1_{\{|W|\leq\lambda \}} |W| \mu \right)
\leq \lambda  P_{n\#} \left(\1_{\{|W|\leq\lambda \}} \mu \right) = \lambda \mu_n.
\end{aligned}
\end{equation}
Since $E_n$ (hence $E_n\times \R$) is finite-dimensional, by~\cite[lemma~A.3]{PaoSte14-flow} one has the representation
$T_n= (V_n, \sigma_n)\wedge \mu_n$ (where $T_n$ is seen as a current over $E_n\times [0,1]$)
for some $V_n\colon E_n\times [0,1]\to E_n$ and
 $\sigma_n\colon E_n\times [0,1]\to \R$ Borel maps.
Hence in particular,
\[
T_n(f\,d\tau)= \int_{E_n\times [0,1]} f\sigma_n\, d\mu_n
\]
for every $f\in C_b(E_n\times [0,1])$. On the other hand $\mu$-a.e.\ one has $W(\tau\circ P_n)=W\tau=1$,
and thus
\begin{align*}
T_n (f\,d\tau) & = (P_{n\#}(T\res \{|W|\leq\lambda \})) (f\,d\tau)
= \int_{E\times [0,1]} f \circ P_n W(\tau\circ P_n)\1_{\{|W|\leq\lambda \}}\, d\mu\\
& = \int_{E\times [0,1]} f \circ P_n \1_{\{|W|\leq\lambda \}}\, d\mu =
\int_{E_n\times [0,1]} f\, d\mu_n,
\end{align*}
so that
\[
\int_{E_n\times [0,1]} f\sigma_n\, d\mu_n =\int_{E_n\times [0,1]} f\, d\mu_n
\]
for every $f\in C_b(E_n\times [0,1])$,
i.e.\ $\sigma_n(x,t)=1$ for $\mu_n$-a.e.\ $(x,t)\in E_n\times [0,1]$. Hence we may write $T_n= (V_n, 1)\wedge \mu_n$ and \cite[lemma~A.1]{PaoSte14-flow} gives
\[
\mu_{T_n} = \sqrt{1+\|V_n\|^2}\mu_n \le \lambda \mu_n,
\]
where the latter inequality follows from~\eqref{eq_fmuTn1}, concluding the proof of \eqref{claim_T}.

To prove~\eqref{claim-D}, we notice that~\eqref{eq_SleqT'1}
gives the following inequality for mass measures
\[
\mu_{D_n} \leq P_{n\#}\mu_{(S\res \{|W|\leq\lambda  \})^\leq} \leq P_{n\#}\mu_{T'\res \{|W|\leq\lambda  \}}\leq
\lambda  \mu_n,
\]
where the last inequality of the above chain is prove in~\eqref{eq_fmuTn1}.
As before, since $E_n$ is finite-dimensional, by~\cite[lemma~A.3]{PaoSte14-flow} one has the representation
$D_n= (\alpha_n, \beta_n)\wedge \mu_n$ for some $\alpha_n\colon E_n\times [0,1]\to E_n$ and
 $\beta_n\colon E_n\times [0,1]\to \R$ Borel maps. In particular, for every $f\in C_b(E_n\times [0,1])$, one has
\[
 \int_{E_n\times [0,1]} f \beta_n d\mu_n = D_n(f\,d\tau)= \int_{E_n\times [0,1]} f a_{D_n}\, d\mu_{D_n}
\]
with $a_{D_n}\leq 0$, $\mu_{D_n}$-a.e.\ in $E_n\times [0,1]$, because $D_n^\leq = D_n$ in view of Lemma~\ref{lm_PnSleq1} (applied with $F:=E$, $\Phi:=P_n$).
Thus $\beta_n\mu_n = a_{D_n}\mu_{D_n}$ coincide, which gives $\beta_n(x,t) \le 0$ for $\mu_n$-a.e.\  $(x,t)\in E_n\times [0,1]$, proving~\eqref{claim-D}.

Now, plugging~\eqref{claim_T} and~\eqref{claim-D} into~\eqref{eq_quasisubS1}, we get
\begin{align*}
\int_{E_n\times [0,1]} (\beta_n^2 +\|\alpha_n\|^2)^{1/2}\,d\mu_n & +
\int_{E_n\times [0,1]} ((1-\beta_n)^2 +\|V_n-\alpha_n\|^2)^{1/2}\,d\mu_n \\
& \qquad \le
\int_{E_n\times [0,1]} (1 +\|V_n\|^2)^{1/2}\,d\mu_n + o(1)
\end{align*}
as $n\to \infty$. 
Since
\begin{align*}
((1-\beta_n)^2 +\|V_n-\alpha_n\|^2)^{1/2} & \geq ((1-\beta_n)^2 +(\|V_n\|-\|\alpha_n\|)^2)^{1/2}\\
&\geq  (1 +\|V_n\|^2)^{1/2} - \frac{\beta_n +\|\alpha_n\|\cdot\|V_n\|}{(1 +\|V_n\|^2)^{1/2}}
\end{align*}
(note that it is here that the choice of the distance $d_2$ is important),
we obtain
\begin{align*}
\int_{E_n\times [0,1]} \left( (\beta_n^2 +\|\alpha_n\|^2)^{1/2}
- \frac{\beta_n +\|\alpha_n\|\cdot\|V_n\|}{(1 +\|V_n\|^2)^{1/2}}
\right)
\,d\mu_n \le  o(1)
\end{align*}
as $n\to \infty$.
Denoting by $\gamma_n$ the angle between the vectors
$(\|V_n\|,1)\in \R^2$ and $(\|\alpha_n\|, \beta_n)\in \R^2$, it is immediate to observe that
\begin{align*}
(\beta_n^2 +\|\alpha_n\|^2)^{1/2}  &
- \frac{\beta_n +\|\alpha_n\|\cdot\|V_n\|}{(1 +\|V_n\|^2)^{1/2}} \\
& \qquad = \frac{(\beta_n^2 +\|\alpha_n\|^2)^{1/2}(1 +\|V_n\|^2)^{1/2} - (\|V_n\|,1)\cdot(\|\alpha_n\|, \beta_n)}{(1 +\|V_n\|^2)^{1/2}}\\
&\qquad
= (\beta_n^2 +\|\alpha_n\|^2)^{1/2} (1-\cos\gamma_n),
\end{align*}
and thus
\begin{align*}
\int_{E_n\times [0,1]} (\beta_n^2 +\|\alpha_n\|^2)^{1/2} (1-\cos\gamma_n)
\,d\mu_n \leq o(1)
\end{align*}
as $n\to \infty$.
But
\[
\cos \gamma_n \leq \frac{\|V_n\|}{(1 +\|V_n\|^2)^{1/2}} \leq \frac{\sqrt{\lambda^2-1}}{\lambda},
\]
(the first inequality being valid because $\beta_n \le 0$, the second one because $\mu_n$-a.e. one has $\|V_n\| \leq \lambda$)
Therefore the inequality
\begin{align*}
\frac{1}{\lambda } \MM(D_n) & =
\frac{1}{\lambda }\int_{E_n\times [0,1]} (\beta_n^2 +\|\alpha_n\|^2)^{1/2}\,d\mu_n\\
& \le \frac{\lambda  - \sqrt{\lambda^2-1}}{\lambda }\int_{E_n\times [0,1]} (\beta_n^2 +\|\alpha_n\|^2)^{1/2}
\,d\mu_n
\leq o(1).
\end{align*}
holds, and, for $\lambda>1$,
as $n \to \infty$, we deduce $\MM(D_n)\to 0$,  which shows~\eqref{eq_resAlam2}, hence concluding the proof.
\end{proof}

\begin{corollary}\label{co_fl2T'acycl}
Let $T\in \M_1(E\times [0,1])$ be  such that for $T^\leq =0$.
Then $T$ is acyclic. In particular,
$T'$ is acyclic (and hence so are $T_t'$ for all $t\in [0,1]$).
\end{corollary}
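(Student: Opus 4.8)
The plan is to reduce everything to Proposition~\ref{prop_fl2T'Sleq0}, which already contains all the substance. Recall that a normal current $T\in\M_1(E\times[0,1])$ is acyclic precisely when the only cycle subordinate to it vanishes, i.e.\ $S\le T$ together with $\partial S=0$ forces $S=0$. So I would fix an arbitrary such $S\le T$ with $\partial S=0$. First, applying Proposition~\ref{prop_fl2T'Sleq0} (with this $S$ and the given $T$, which satisfies $T^\leq=0$ by hypothesis) yields $S^\leq=0$; in the notation of Section~\ref{ssec_acycl1} this says exactly that the density $a_S$ in the representation $S(f\,d\tau)=\int_{E\times[0,1]}f\,a_S\,d\mu_S$ is strictly positive $\mu_S$-a.e. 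Second, since $\tau(x,t):=t$ lies in $\Lip_b(E\times[0,1])$ (indeed $\Lip\,\tau=1$), it is an admissible argument for the boundary operator, and $\partial S=0$ gives
\[
0=S(\1\,d\tau)=\int_{E\times[0,1]}a_S\,d\mu_S.
\]
As the integrand is $\mu_S$-a.e.\ positive, this forces $\mu_S=0$, hence $\MM(S)=0$ and $S=0$. This proves the first assertion.

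For the ``in particular'' part, I would invoke Remark~\ref{rm_T'acycl1}, which records that $T'$ (with underlying measure $\mu_t\otimes dt$) satisfies $W\tau=1$ and therefore $(T')^\leq=0$; thus $T'$ is acyclic by what has just been shown. Finally, each $T'_t$ is a subcurrent of $T'$, and acyclicity passes to subcurrents: if $S\le T'_t$ is a cycle, then $S\le T'$ by transitivity of the relation $\le$ (an immediate consequence of subadditivity of mass), so $S=0$. Hence all the $T'_t$ are acyclic as well.

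I do not expect a genuine obstacle here: the corollary is a short formal deduction, the whole difficulty having been absorbed into Proposition~\ref{prop_fl2T'Sleq0}. The only points that deserve a line of care are confirming that the notion of ``cycle subordinate to $T$'' matches the definition of acyclicity used in~\cite{PaoSte11-acycl}, and checking the transitivity of $\le$; both are routine.
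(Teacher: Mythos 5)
Your proposal is correct and follows essentially the same route as the paper: it reduces to Proposition~\ref{prop_fl2T'Sleq0} to get $S^\leq=0$ for any cycle $S\le T$, then pairs with $\tau$ via $\partial S(\tau)=S(\1\,d\tau)=\int a_S\,d\mu_S=0$ to force $S=0$, and handles $T'$ and $T'_t$ exactly as in the paper through Remark~\ref{rm_T'acycl1} and the fact that subcurrents of an acyclic current are acyclic. The minor repackaging (arguing $a_S>0$ $\mu_S$-a.e.\ instead of splitting $\mu_S$ on $\{a_S>0\}$ and $\{a_S\le 0\}$) does not change the argument.
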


\begin{remark}\label{rm__fl2T'acycl_dist1}
The above corollary relies on Proposition~\ref{prop_fl2T'Sleq0} and hence on the particular choice of the distance
in $E\times [0,1]$. The claim becomes however independent of the latter if one substitutes the requirement $T^\leq =0$
by the more general one $S^\leq =0$ for every $S\leq T$.
\end{remark}

\begin{proof}
If $C\in \M_1(E\times [0,1])$, $C\leq T$ and $\partial C=0$, then  $C^\leq =0$ by Proposition~\ref{prop_fl2T'Sleq0},
so that $C= C^>$.
Thus
\[
0 =\partial C(\tau)= C(1\,d\tau)= C^> (1\, d\tau)=\int_{\{a_C>0\}} a_C d\mu_C,
\]
which means that $\mu_C(\{a_C>0\})=0$. On the other hand,
$\mu_C(\{a_C\leq 0\})=0$, because $C^\leq =0$, and therefore $\mu_C=0$, i.e.\ $C=0$ showing acyclicity of $T$.
The acyclicity of $T'$ follows now by
Remark~\ref{rm_T'acycl1}. All $T'_t$ are also acyclic as subcurrents of the acyclic current $T'$.
\end{proof}

\subsection{Decomposition in curves}

We show now the following statement regarding currents in space-time, which is in particular valid for the current
$T'$ in view of Proposition~\ref{prop_fl2T'Sleq0}.

\begin{proposition}\label{prop_fl2T'repr1}
Let $T\in \M_1(E\times [0,1])$ be a normal current such that $T^\leq =0$.
Then there is a finite Borel measure $\eta$ on $C([0,1]; E)$ concentrated over absolutely continuous curves,
such that for $T_t:= T\res\mathbf{1}_{[0,t)}$, $f\,d\pi\in D^1(E\times [0,1])$, one has
\begin{eqnarray}
\label{eq_fl2reprT't1}
    T_t (f\,d\pi)=
  \int_{C([0,1]; E)}\, d\eta(\theta)\int_0^t f(\theta(s),s) \frac{d\,}{ds}\pi(\theta(s),s)\,ds ,\\
\label{eq:mass-T'}
\MM(T_t)=  \int_{C([0,1]; E)}\, d\eta(\theta)\int_0^t \sqrt{1+|\dot{\theta}|^2(s)}\, ds.
\end{eqnarray}
If in particular $T=T'$ (hence $T_t=T_t'$), then also $e_{t\#}\eta=\mu_t$ for every  $t \in [0,1]$.
\end{proposition}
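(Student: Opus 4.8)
The plan is to apply the Smirnov-type decomposition for acyclic normal $1$-currents in metric spaces from \cite{PaoSte11-acycl} to the current $T'$ over $E\times[0,1]$, and then massage the resulting measure over Lipschitz curves into a measure over absolutely continuous curves in $E$ parameterized by time. First I would invoke Corollary~\ref{co_fl2T'acycl}, which tells us that $T'$ (hence each $T'_t$) is acyclic, and Proposition~\ref{prop_fl2T'curr1}, which tells us $T'$ is normal with $\partial T' = \mu_1\otimes\delta_1 - \mu_0\otimes\delta_0$. By the decomposition theorem for acyclic normal currents, there is a finite Borel measure $\lambda$ over (non-constant, injective, Lipschitz) curves $\gamma$ in $E\times[0,1]$, each one a normal current $\ld\gamma\rd$ with mass equal to its length, such that $T' = \int \ld\gamma\rd\,d\lambda(\gamma)$ with no cancellation of mass, i.e.\ $\mu_{T'} = \int \mu_{\ld\gamma\rd}\,d\lambda(\gamma)$.

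The key structural point is that the curves $\gamma$ in the support of $\lambda$ are ``directed forward in time''. Concretely, writing $\gamma(r) = (\gamma_E(r),\gamma_\tau(r))$ and recalling $W\tau = 1$ $\mu_{T'}$-a.e.\ (Remark~\ref{rm_T'acycl1}), the $\tau$-component of the derivative of each $\gamma$ is nonnegative along $\gamma$; more precisely, since for a subcurrent $S\le T'$ one has $S^\le = 0$ by Proposition~\ref{prop_fl2T'Sleq0} applied through the decomposition (a cycle-free current built from curves whose $\tau$-component ever decreased would produce a nonzero $S^\le$), each $\gamma_\tau$ is nondecreasing $\lambda$-a.e. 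I would then reparameterize each such curve by its time component: define $\theta(t) := \gamma_E(\gamma_\tau^{-1}(t))$ — being careful on the (measure-zero-length) flat pieces where $\gamma_\tau$ is constant — obtaining a curve $\theta\in C([0,1];E)$, and push $\lambda$ forward under $\gamma\mapsto\theta$ to get a candidate $\eta$ on $C([0,1];E)$. Because $\mathrm{Lip}\,\gamma_\tau \le 1$ in the $d_2$-metric and, crucially, the metric $d_2$ makes the length of $\gamma$ on any arc equal to $\int \sqrt{|\dot\gamma_E|^2 + |\dot\gamma_\tau|^2}$, the fact that each $\gamma$ has finite length forces $\theta$ to be absolutely continuous with $|\dot\theta|(t) = |\dot\gamma_E|/|\dot\gamma_\tau|$ at the corresponding parameter; one checks $\eta$ is concentrated on absolutely continuous curves and that $\MM(T'_t) = \int d\eta(\theta)\int_0^t\sqrt{1+|\dot\theta|^2(s)}\,ds$, which is exactly \eqref{eq:mass-T'}, by the change of variables in the length integral. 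The representation \eqref{eq_fl2reprT't1} follows by writing each $\ld\gamma\rd(f\,d\pi)$ as a line integral, restricting to the portion with $\tau\le t$ (this is the subcurrent defining $T'_t$), and applying the same change of variables.

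Finally, to identify the marginals $e_{t\#}\eta = \mu_t$: from $\partial T'_t = \mu_t\otimes\delta_t - \mu_0\otimes\delta_0$ and the representation one computes, for any $g\in \A$, $\langle g\otimes \phi_\varepsilon(\tau), \partial T'_t\rangle$ with $\phi_\varepsilon$ approximating $\1_{\{\tau = t\}}$, that $\int g\,d(e_{t\#}\eta) = \int_{C([0,1];E)} g(\theta(t))\,d\eta(\theta)$ equals $\int g\,d\mu_t$; since $\A$ is dense enough (by ($\A_1$) or ($\A_2$)) to separate measures, $e_{t\#}\eta = \mu_t$. Alternatively, and perhaps more cleanly, I would track the boundaries of the individual pieces $\ld\gamma\rd$: in a no-cancellation decomposition of $T'$ the boundary masses add up, $\mu_{\partial T'} = \int \mu_{\partial\ld\gamma\rd}\,d\lambda$, so the initial (resp.\ terminal) points of the curves $\gamma$ are distributed according to $\mu_0\otimes\delta_0$ (resp.\ $\mu_1\otimes\delta_1$), and the same slicing argument applied to the normal currents $T'_t$ — each $T'_t$ is itself acyclic and normal — gives the intermediate marginals.

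\medskip

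The step I expect to be the main obstacle is the reparameterization by time together with the verification that it behaves well $\lambda$-almost everywhere: one must rule out curves that ``pause'' in time on a set of positive length in a way that would spoil either absolute continuity of $\theta$ or the mass identity, control the sets where $\gamma_\tau^{-1}$ is multivalued, and — as the text itself flags in Remark~\ref{rem:product-cycles} and in the parenthetical remarks inside the proof of Proposition~\ref{prop_fl2T'Sleq0} — use in an essential way that $E\times[0,1]$ carries the Euclidean-type distance $d_2$, since for other product distances the length functional does not split as $\sqrt{|\dot\gamma_E|^2 + |\dot\gamma_\tau|^2}$ and subcurves that are cycles can appear. Making the change of variables rigorous at the level of the measures $\lambda$ and $\eta$ (rather than for a single fixed curve) is where the real work lies.
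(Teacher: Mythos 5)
Your proposal follows essentially the same route as the paper's proof: acyclicity via Corollary~\ref{co_fl2T'acycl}, the decomposition theorem of~\cite{PaoSte11-acycl} applied to the normal current in space--time, forward-directedness in time of the selected curves deduced from Proposition~\ref{prop_fl2T'Sleq0}, reparameterization by the time variable, and identification of the marginals through $\partial T'_t=\mu_t\otimes\delta_t-\mu_0\otimes\delta_0$. The obstacle you flag at the end is closed in the paper by the very device you already invoke: from any positive-measure set of pairs (curve, parameter) where the time component fails to increase strictly one builds a subcurrent $S\le T$ with $a_S\le 0$ and $\MM(S)>0$, contradicting Proposition~\ref{prop_fl2T'Sleq0}, so the time component has strictly positive derivative almost everywhere and, the representation being invariant under reparameterization, one may assume the curves are of the form $(\theta(t),t)$ with no flat pieces to worry about.
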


\begin{proof}
The current $T$ is acyclic by Corollary~\ref{co_fl2T'acycl}, and hence
by the representation theorem for acyclic one-dimensional normal metric currents~\cite[theorem~5.1]{PaoSte11-acycl}, there is a measure $\eta'$ over $C([0,1]; E\times[0,1])$ concentrated over injective Lipschitz curves $\theta'\in \Lip([0,1];E\times [0,1])$
such that
\begin{equation}\label{eq_fl2reprT't2_0}
\begin{aligned}
    T (f\,d\pi) &=
  \int_{C([0,1]; E\times[0,1])}\, d\eta'(\theta')\int_0^1 f(\theta'(s)) \frac{d\,}{ds}\pi(\theta'(s))\,ds,\\
    \MM(T) &=
  \int_{C([0,1]; E\times[0,1])}\, \ell(\theta') d\eta'(\theta'),
\end{aligned}
\end{equation}
and $e_{s\#}\eta'=\mu_s\otimes\delta_s$ for $s=0$ and $s=1$.

Next, we obtain $\eta$ by a suitable reparametrization of the curves selected by $\eta'$. We claim first that for $\eta'\otimes \mathcal{L}^1$ a.e.\ $(\theta', t)\in C([0,1]; E\times[0,1])\times [0,1]$
one has $\theta'(t)=(\theta(t), \sigma(t))$, with $\dot{\sigma}(t) >0$.
In fact, suppose the contrary, i.e.\ the existence of an $\alpha <0$ and of a  Borel set $\Delta \subset C([0,1]; E\times[0,1])\times [0,1]$
such that $(\eta'\otimes \mathcal{L}^1) (\Delta)>0$  and for every
$(\theta', t)\in \Delta$
one has $\theta'(t)=(\theta(t), \sigma(t))$ with $\dot{\sigma}(t) \leq \alpha$. The current $S\in \M_1(E\times [0,1])$ defined by
\[
  S(f\,d\pi) :=
  \int_{\Delta} f(\pi(\theta'(t))) \frac{d\,}{dt}\pi(\theta'(t)) \, d\eta'(\theta')\otimes dt
\]
satisfies $a_S \leq 0$, and hence $S^\leq =S$. On the other hand, $S\leq T$, and hence $S=0$ by Proposition~\ref{prop_fl2T'Sleq0}.
But $\MM(S)\geq |\alpha| (\eta'\otimes \mathcal{L}^1)(\Delta)>0$ which is a contradiction.

Now, since the representation~\eqref{eq_fl2reprT't2_0} is independent on reparametrization of the curves,
in view of the established claim,
we may assume without loss of generality that $\eta'$ is concentrated over curves
of the form $\theta'(t):=(\theta(t),t)$, with $\theta\in C([0,1];E)$.
Let now $r_t\colon \theta\mapsto \theta\res [0,t]$, and define $\eta_t :=r_{t\#}\eta$.
Then, thanks to the reparametrization, one has $T_{\eta_t}=T_t$, for $t \in [0,1]$, and in particular
\begin{equation}\label{eq_fl2reprT't2}
\begin{aligned}
 T_t (f\,d\pi) &=
  \int_{C([0,1]; E\times[0,1])}\, d\eta'(\theta')\int_0^t f(\theta(s), s) \frac{d\,}{ds}\pi(\theta(s), s)\, ds\\
 \MM(T_t) &=
 \int_{C([0,1]; E\times[0,1])}\ell(\theta'\res [0,t))\, d\eta'(\theta')
  = \int_{C([0,1]; E\times[0,1])}\, d\eta'(\theta')\int_0^t \sqrt{1+|\dot{\theta}|^2(s)}\, ds.
\end{aligned}
\end{equation}
Now,~\eqref{eq_fl2reprT't1} and~\eqref{eq:mass-T'} follow from~\eqref{eq_fl2reprT't2}, if we consider the map
$\hat{P}_E(\theta,\sigma)(t):= \theta(t)$ for every $t\in [0,1]$, $(\theta, \sigma)\in C([0,1];E \times [0,1])$, and define $\eta:=\hat{P}_{E\#}\eta'$.

If $T=T'$ (hence $T_t=T_t'$), then
\[
\mu_t\otimes\delta_t - \mu_0\otimes\delta_0=\partial T'_t= e_{1\#}\eta_t'- e_{0\#}\eta_t' = e_{t\#}\eta'- e_{0\#}\eta' = e_{t\#}\eta'-\mu_0\otimes\delta_0,
\]
give $\mu_t\otimes\delta_t = e_{t\#}\eta'= e_{t\#}\eta\otimes\delta_t$, and thus $e_{t\#}\eta=\mu_t$ as claimed.
\end{proof}

\begin{remark}\label{rem:from-curves-to-equation}
From the above Proposition~\ref{prop_fl2T'repr1}, we deduce that in fact \emph{every} normal current in space-time $T \in \M_1(E\times [0,1])$ satisfying $T^\le = 0$ defines naturally a family of Borel measures $\{\mu_t\}_{t \in [0,1]}$ and a family of maps $\{V_t\}_{t \in [0,1]}$ satisfying~\eqref{eq_VtLeib1}, \eqref{eq_VtLip1} and~\eqref{eq:V-bar-integrability} such that the continuity equation~\eqref{eq_fl2cont1} holds in duality with $\A := \Lip_b(E)$. Indeed, with the notation of the proof above, we let $\mu := (\beta_0)_\#( \eta' \otimes \mathcal{L}^1)$, where $\beta_0( \theta, t) := (\theta(t), t)$, and $W$ be such that $T (f\, d\pi) = \int_{E\times [0,1]} f W \pi d\mu$. Let $\mu_t$ be measures on $E$ such that $(e_t)_\# \eta = \mu_t$, and define $(V_t \pi)(x) :=( W \pi )(t,x)$, for $\pi \in \Lip_b(E)$. Then, for $g \in C^1_0(0,1)$, $\pi \in \Lip_b(E)$, we have
\begin{align*}
- \int_0^1 g'(t)  dt \int_E \pi d\mu_t & = - \int_{C([0,1];E)} d\eta(\theta) \int_0^1 g'(t)  \pi(\theta(t)) dt \\
& = \int_{C([0,1];E )} d\eta(\theta) \int_0^1 g(t)  \frac{d}{dt} \pi(\theta(t)) dt \\
& = T (g \, d\pi) \quad \text{by \eqref{eq_fl2reprT't2}}\\
& = \int_{E \times [0,1]} g \, W\pi d\mu = \int_{C([0,1];E )} d\eta(\theta) \int_0^1 g(t)  (W \pi)(\theta(t), t) dt\\
&= \int_0^1 g(t)  dt \int_E V_t\pi d\mu_t,
\end{align*}
so that the continuity equation~\eqref{eq_fl2cont1} holds.
\end{remark}

\subsection{Characteristic ODE in observables}

In this section we conclude the proof of Theorem~\ref{thm:sp}, showing the following results.

\begin{theorem}\label{th_fl2T'repr2}
Let $\{\mu_t\}$ and $\{V_t\}$, where $t \in [0,1]$, be a narrowly continuous curve of finite positive Borel measures and a family of time-dependent vector fields respectively, satisfying~\eqref{eq_VtLeib1},~\eqref{eq_VtLip1},~\eqref{eq:V-bar-integrability} and~\eqref{eq_fl2cont1}.
If the algebra $\A$ is such that the functional $T'$  defined by~\eqref{eq_fl2defT'} is extendable to a metric current
over $E\times [0,1]$, then the following assertions hold.

\begin{itemize}
\item[(i)] There is a Borel measure $\eta$ over $C([0,1]; E)$ concentrated over absolutely continuous curves
such that $e_{t\#}\eta=\mu_t$, and
for
$\nu_x^t$ conditional probability measures over $e_t^{-1}(x) = \{\theta: \theta(t) = x\} \subset C([0,1]; E)$
defined by disintegration $\eta=\mu_t\otimes \nu_x^t$
one has that
$|\dot{\theta}|(t)$ is constant for $\nu_x^t$-a.e.\ $\theta$, $\mu_t$-a.e.\ $x\in E$ and a.e.\ $t\in [0,1]$, and
\begin{equation}\label{eq_fl2repreta1mod1}
|\dot{\theta}|(t)= |\tilde V_t|(\theta(t)) \quad \text{for $\eta$-a.e.\ $\theta$ and
a.e.\ $t\in [0,1]$,}
\end{equation}
where $|\tilde V_t|(x) :=(|W|(x,t)^2-1)^{1/2}$ (i.e.\ $|W|= (1+|\tilde V_t|^2)^{1/2}$).
The above assertion holds in fact for every $\eta$ representing $T'$ without cancellation of mass and satisfying $e_{t\#}\eta=\mu_t$ for every $t \in [0,1]$.
\item[(ii)] Further, there is a measure $\eta$ as above satisfying in addition to~\eqref{eq_fl2repreta1mod1}
\begin{equation}\label{eq_fl2repreta1}
\frac{d\,}{dt} \pi(\theta(t))= (\tilde V_t\pi)(\theta(t))\qquad \text{for $\eta$-a.e.\ $\theta$ and
a.e.\ $t\in [0,1]$.}
\end{equation}
for every $\pi\in \Lip(E)$.
\end{itemize}

In particular, all the above assertions hold when $\A$ satisfies either~($\A_1$) or~($\A_2$) together with~($B$), i.e.\ under conditions of Theorem~\ref{thm:sp} (or, alternatively, the weaker conditions either~($\A_1'$) or~($\A_2'$) together with~($B'$)).
\end{theorem}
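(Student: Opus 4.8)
The plan is to derive both assertions from the curve decomposition already established in Proposition~\ref{prop_fl2T'repr1}, by analyzing the equality case in a Jensen-type inequality for the mass functional. First I would invoke Proposition~\ref{prop_fl2T'repr1}, which, under the hypothesis that $T'$ extends to a metric current (hence, by Corollary~\ref{co_fl2T'acycl}, is acyclic), produces a finite Borel measure $\eta$ on $C([0,1];E)$ concentrated on absolutely continuous curves with $e_{t\#}\eta=\mu_t$, with the current and mass representations~\eqref{eq_fl2reprT't1}--\eqref{eq:mass-T'}. The key point is that $\MM(T'_t)$ is simultaneously computed by~\eqref{eq:mass-T'} as $\int d\eta(\theta)\int_0^t\sqrt{1+|\dot\theta|^2(s)}\,ds$ and, on the other hand, bounded below by $\int_0^t\!\int_E\sqrt{1+|\tilde V_s|^2}\,d\mu_s\,ds = \int_0^t\!\int_E|W|\,d\mu_s\,ds = \MM(T')\res[0,t]$ — here I would use~\eqref{eq:defin-W-abs}, the definition of $|\tilde V_t|$ via $|W|=(1+|\tilde V_t|^2)^{1/2}$, and the fact that no cancellation can lower the mass. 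Disintegrating $\eta=\mu_t\otimes\nu^t_x$ along $e_t$ and applying Jensen's inequality to the convex function $r\mapsto\sqrt{1+r^2}$ with the constraint $\int|\dot\theta|(t)\,d\nu^t_x(\theta)$ being controlled below by $|\tilde V_t|(x)$ (this last inequality coming from the fact that projecting the representing curves onto $E$ yields a "flow" whose space-velocity cannot exceed that of $\tilde V_t$, which is Proposition~\ref{prop_modVelest1a}), one gets that equality must hold $\mu_t\otimes dt$-a.e. Equality in Jensen for a strictly convex function forces $|\dot\theta|(t)$ to be $\nu^t_x$-a.e.\ constant, and matching the average pins the constant to $|\tilde V_t|(\theta(t))$, giving~\eqref{eq_fl2repreta1mod1}. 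The statement about \emph{any} $\eta$ representing $T'$ without cancellation follows because the argument used only the mass identity, not the particular construction.

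For part~(ii) I would upgrade the metric-velocity identity to the full ODE in observables by exploiting strict convexity of the norm, following the strategy of~\cite[proposition~5.1]{PaoSte14-flow}. The idea is that we may first reduce, via the isometric embedding $E\hookrightarrow\ell^\infty$ and renorming $\ell^\infty$ by an equivalent dual strictly convex norm (the claim of Theorem~\ref{thm:sp}, and of~\eqref{eq_fl2repreta1} for $\pi\in\A$, being insensitive to the equivalent choice of distance), to the case where $E$ is a Banach space with strictly convex norm. Then for $\pi\in\Lip(E)$, writing the current $T'_t$ both via the derivation $\tilde V$ (through $W=\tilde V+\partial_t$) and via the representing curves, one obtains
\begin{equation}\label{eq:plan-jensen-vector}
\int_{C([0,1];E)}d\eta(\theta)\int_0^t\frac{d}{ds}\pi(\theta(s))\,ds = \int_0^t\!\int_E(\tilde V_s\pi)(x)\,d\mu_s(x)\,ds,
\end{equation}
and a companion inequality between the "vector velocities": for a.e.\ $s$, the weak$^*$ integral (see Appendix~\ref{sec_wkstInt}) of the metric-derivative directions of the $\nu^s_x$-curves reproduces the vector field $\tilde V_s$, while by part~(i) the moduli already coincide. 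Strict convexity of the norm then forces the vectors themselves to align $\nu^s_x$-a.e., i.e.\ the representing curves are $\eta$-a.e.\ differentiable with $\dot\theta(s)=\hat V_s(\theta(s))$ in the appropriate weak sense, yielding $\frac{d}{ds}\pi(\theta(s))=(\tilde V_s\pi)(\theta(s))$ for $\eta$-a.e.\ $\theta$ and a.e.\ $s$, for each fixed $\pi$.

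The main obstacle I expect is the rigorous treatment of the equality case in Jensen's inequality \emph{for vector-valued integrals interpreted weak$^*$}, needed in part~(ii): one must show that if the norm is strictly convex and a weak$^*$-barycenter of a family of unit-speed directions has modulus equal to the average of the moduli, then the directions are a.e.\ constant. This requires care because the integral is only a weak$^*$ integral (the target space $\ell^\infty$ is not separable, lacks Radon--Nikodym property in general), so the usual Jensen-equality arguments via exposed points or via strict convexity of $\|\cdot\|^2$ do not transfer verbatim; this is precisely what Appendix~\ref{sec_wkstInt} is set up to handle, and the proof should reduce to it. A secondary technical point is justifying that passing through the finite-rank projections $P_n$ and then to the limit (as in the proof of Proposition~\ref{prop_fl2T'Sleq0}) is compatible with the renorming, and that measurability of the exceptional sets in $\pi$ is handled by taking a countable dense family and using continuity~\eqref{eq_lsclip1}; these are routine given the machinery already developed. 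Finally, the extension from $\pi\in\A$ to $\pi\in\Lip(E)$ uses~\eqref{eq:defin-tilde-V} together with the Leibniz rule for $\tilde V$ on $\Lip_b(E)$, exactly as recorded in Section~\ref{subsec_extV1}.
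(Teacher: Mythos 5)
Your treatment of part~(i) is essentially sound and close in spirit to the paper's Step~1--2: you recover the pointwise identity between $|W|$ and the $\nu^t_x$-average of $\sqrt{1+|\dot\theta|^2}$ from the two expressions for $\MM(T'_t)$, and then force constancy by an equality case in Jensen. Two caveats: the inequality $|\tilde V_t|(x)\le\int_{e_t^{-1}(x)}|\dot\theta|(t)\,d\nu^t_x(\theta)$ cannot be quoted from Proposition~\ref{prop_modVelest1a} as a black box, since the proof of that proposition uses~\eqref{eq_fl2repreta1mod1} itself; you may only use its non-circular half (any admissible $\beta$ in~\eqref{eq:defin-tilde-V-norm} dominates $\sqrt{|W|^2-1}$), and you must verify separately that $\beta(x,t):=\int|\dot\theta|(t)\,d\nu^t_x$ is admissible, which requires the disintegration identity $(W\pi)(x,t)=\int_{e_t^{-1}(x)}\frac{d}{dt}\pi(\theta(t),t)\,d\nu_x^t(\theta)$ and a mass-realizing countable family as in Lemma~\ref{lm_massestV1}(iii). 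With that repaired, your scalar Jensen route is a legitimate (slightly more elementary) substitute for the paper's Lemma~\ref{lm_charODE_ban1}(A).

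Part~(ii), however, has a genuine gap exactly where you write ``yielding $\frac{d}{ds}\pi(\theta(s))=(\tilde V_s\pi)(\theta(s))$ for each fixed $\pi$''. What strict convexity plus the equality case in the weak$^*$ Jensen inequality (Lemma~\ref{lm_revJensen_wkst1}) actually gives is the ODE for \emph{linear} observables: for each $\psi$ in the predual, $\langle\psi,\dot\theta_w(s)\rangle$ coincides $\nu^s_x$-a.e.\ with its barycenter, i.e.\ the weak$^*$ derivatives align. This does \emph{not} imply the identity for a general $\pi\in\Lip(E)$: after renorming, the ambient space is (a renormed) $\ell^\infty$, which lacks the Radon--Nikodym property, the curves are only weak$^*$-differentiable, and there is no chain rule expressing $\frac{d}{ds}\pi(\theta(s))$ through $\dot\theta_w(s)$ for a merely Lipschitz $\pi$; two curves through the same point with equal weak$^*$ derivatives can produce different values of $\frac{d}{ds}\pi(\theta(s))$. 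The passage from linear observables to all of $\Lip(E)$ is a substantive step in the paper: one first shows (Lemma~\ref{lm_piauxcurrT1}) that the disintegration defines a normal current in space represented by $\eta$ without cancellation, then propagates the ODE through lattice operations via the chain rule and locality of currents (Lemma~\ref{lm_piwedge1}) and through pointwise limits with bounded Lipschitz constants via weak$^*$ continuity of $\pi\mapsto W\pi$ in integrated form (Lemma~\ref{lm_pilim1}), reaching distance functions by Lemma~\ref{lm_mcurr_densd1b} and finally all $1$-Lipschitz functions by Lemma~\ref{lm_mcurr_densd1a}. Your ``main obstacle'' paragraph correctly locates the weak$^*$ Jensen equality (Appendix~\ref{sec_wkstInt}), but that handles only the earlier alignment step; the missing extension argument is where most of the remaining work lies, and without it the claim for arbitrary Lipschitz observables does not follow. (Your secondary worry about the projections $P_n$ is moot: they enter only in Proposition~\ref{prop_fl2T'Sleq0}, upstream of this theorem.)
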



When $E$ can be bi-Lipschitz embedded into a Banach space with Radon-Nikodym property, we may provide a more precise description of the velocity of integral curves of~\eqref{eq_fl2repreta1} as follows.

\begin{theorem}\label{thm:radon-nikodym}
If under conditions of Theorem~\ref{th_fl2T'repr2} one has additionally the existence of a bi-Lipschitz embedding $j\colon E\to \hat{E}$ of $E$ into a Banach space $\hat{E}$ with Radon-Nikodym property, then
there is a Borel function $\hat V\colon E\times [0,1]\to \hat{E}$ such that
\[
\frac{d}{dt} j(\theta (t))= \hat V_t(\theta(t)) \quad \text{for $\eta$-a.e.\ $\theta$ and
a.e.\ $t\in [0,1]$,}
\]
where $\hat V_t(x):= \hat V(x,t)$.
\end{theorem}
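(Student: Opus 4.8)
The plan is to upgrade the observable-level identity~\eqref{eq_fl2repreta1} to a genuine vector-valued ODE, exploiting the Radon--Nikodym property (RNP) of $\hat E$. First I would reduce to the case $E \subset \hat E$ by identifying $E$ with its bi-Lipschitz image $j(E)$; since the conclusion is invariant under bi-Lipschitz changes of metric (all the objects $\eta$, $\tilde V_t$, absolute continuity of curves are stable), there is no loss of generality. The key point is that an $\hat E$-valued absolutely continuous curve $\theta$ is, by the RNP, differentiable a.e.\ in the strong sense, with $\theta(t) = \theta(0) + \int_0^t \dot\theta(s)\,ds$ a Bochner integral; this is precisely where RNP enters. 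Thus, for $\eta$-a.e.\ $\theta$, the pointwise derivative $\dot\theta(t) \in \hat E$ exists for a.e.\ $t$, and $|\dot\theta|(t) = \|\dot\theta(t)\|$ by the standard identification of metric and strong derivatives.

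Next I would produce the candidate field $\hat V$. Embedding $\hat E$ isometrically (or just choosing a separable subspace containing $\overline{j(E)}$ and a norming sequence of functionals), fix a countable family $\{\ell_k\} \subset \hat E'$ that is norming and closed under the algebra operations needed. For each $k$ the coordinate function $\pi_k := \ell_k \res E$ lies in $\Lip(E)$ (indeed in $Q^1(E)$ with $d\pi_k(x) = \ell_k$), so~\eqref{eq_fl2repreta1} applies: for $\eta$-a.e.\ $\theta$ and a.e.\ $t$,
\[
\frac{d}{dt}\, \ell_k(\theta(t)) = (\tilde V_t \pi_k)(\theta(t)).
\]
On the other hand, strong differentiability gives $\frac{d}{dt}\ell_k(\theta(t)) = \ell_k(\dot\theta(t))$. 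Comparing, we get $\ell_k(\dot\theta(t)) = (\tilde V_t \pi_k)(\theta(t))$ for all $k$, simultaneously off an $\eta\otimes\mathcal L^1$-null set (countable intersection). Now I define $\hat V_t(x)$ by disintegrating: the right-hand side $(\tilde V_t\pi_k)(x)$ depends only on $(x,t)$, so the vector $\dot\theta(t)$ is $\nu_x^t$-a.e.\ determined by its coordinates $\ell_k(\dot\theta(t)) = (\tilde V_t\pi_k)(x)$; since $\{\ell_k\}$ is norming this pins down $\dot\theta(t)$ uniquely as a function of $(x,t)$, and I set $\hat V(x,t)$ equal to this common value (with $\hat V := 0$ on the exceptional set). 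Borel measurability of $\hat V$ follows because each coordinate $(x,t)\mapsto (\tilde V_t\pi_k)(x)$ is Borel (it is a representative of the $L^1$-class $W\pi_k - \partial_t\pi_k$) and a weakly measurable $\hat E$-valued map into a separable space is strongly (Bochner) measurable by Pettis.

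With $\hat V$ in hand, the final identity $\frac{d}{dt} j(\theta(t)) = \hat V_t(\theta(t))$ for $\eta$-a.e.\ $\theta$ and a.e.\ $t$ is then immediate: for such $(\theta,t)$ and every $k$, $\ell_k(\dot\theta(t)) = (\tilde V_t\pi_k)(\theta(t)) = \ell_k(\hat V_t(\theta(t)))$, and since $\{\ell_k\}$ separates points of $\hat E$, $\dot\theta(t) = \hat V_t(\theta(t))$. The main obstacle is the measurable-selection / consistency issue: one must check that the exceptional null sets from~\eqref{eq_fl2repreta1} (which a priori depend on $\pi$, here on each $\ell_k$) can be absorbed into a single $\eta\otimes\mathcal L^1$-null set — this is fine because there are only countably many $\ell_k$ — and, more delicately, that $\hat V$ is well-defined $\mu_t\otimes dt$-a.e.\ as a single-valued Borel map, for which one invokes the disintegration $\eta = \mu_t\otimes\nu_x^t$ together with the fact that the coordinates of $\dot\theta(t)$ are constant along fibers of $e_t$. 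A secondary technical point is verifying that a norming countable family $\{\ell_k\}$ exists and that each restriction $\ell_k\res E$ lies in the class $\Lip(E)$ to which~\eqref{eq_fl2repreta1} applies — both are standard given separability of $\overline{j(E)}$.
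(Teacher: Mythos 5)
Your argument is correct in substance, but it follows a genuinely different route from the paper's. The paper proves Theorem~\ref{thm:radon-nikodym} by re-running the last step of the proof of Theorem~\ref{th_fl2T'repr2} inside $\hat E$: it restricts $\hat E$ to the (separable) image of $\supp \mu_{T'}$ under $j$, renorms it with an equivalent strictly convex norm, invokes proposition~5.1 of~\cite{PaoSte14-flow} to represent the space-time current $j'_{\#}T'$ by a pointwise-defined Borel vector field $\hat W$ acting on quasi-differentiable observables, and then uses Lemma~\ref{lm_charODE_ban1}(C) (whose proof rests on the equality case in Jensen's inequality for Bochner integrals, Remark~\ref{rem_bochn_conv1}) to identify $\frac{d}{dt}j(\theta(t))$ with the space component of $\hat W$, with a null set independent of the observable. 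You instead take the already-proven ODE in observables~\eqref{eq_fl2repreta1} as a black box, use the classical fact that absolutely continuous curves into an RNP space are a.e.\ strongly differentiable, test against a countable separating/norming family of functionals $\ell_k$, and observe that the coordinates $\ell_k(\dot\theta(t))=(\tilde V_t(\ell_k\circ j))(\theta(t))$ depend only on $(\theta(t),t)$, which both defines $\hat V$ and gives the ODE; measurability then comes from the countable family plus separable range (strictly speaking a Lusin--Souslin or norming-family argument rather than Pettis, but this is standard). Your bootstrap is more elementary — it avoids re-invoking the current-representation result and the Jensen machinery at this stage — and it even recovers, as a byproduct, the universal null set of Remark~\ref{rem_rn_null1}, since once $\dot\theta(t)=\hat V_t(\theta(t))$ a.e.\ the chain rule handles all $\hat\pi\circ j$ with $\hat\pi\in Q^1(\hat E)$ at once; what the paper's route buys is that $\hat V$ is produced directly from the underlying vector field $\hat W$ of the current, together with the fiberwise constancy statements of Lemma~\ref{lm_charODE_ban1}(C). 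One caveat: your reduction ``choose a separable subspace containing $\overline{j(E)}$'' tacitly assumes $E$ separable; in general you should, exactly as the paper does, restrict to the separable support of $\mu_{T'}$ (equivalently, to the closed linear span of the values of the curves charged by $\eta$), which suffices because $\eta$ is concentrated on such curves and the derivatives lie in that subspace.
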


\begin{remark}\label{rem_rn_null1}
Theorem~\ref{th_fl2T'repr2} affirms only that~\eqref{eq_fl2repreta1} holds for $\theta$ outside of some $\eta$-nullset of curves, possibly depending on the observable $\pi$.
On the contrary, in the situation of Theorem~\ref{thm:radon-nikodym} one has that~\eqref{eq_fl2repreta1}
holds for $\pi := \hat{\pi}\circ j$ with $\hat{\pi} \in Q^1(\hat{E})$ outside of some universal (i.e.\ independent of
of $\pi$) $\eta$-nullset of curves.
\end{remark}

Theorem~\ref{th_fl2T'repr2} implies the following corollary, particularly useful in case of autonomous
(i.e.\ time-independent) vector fields.

\begin{corollary}\label{co_currconteq0}
Under the conditions of Theorem~\ref{th_fl2T'repr2}, let $T$ be the functional on $D^1(E)$, defined by
\[
\begin{split}
T(f\,d\pi) &=  \int_0^1\,dt\int_E f(x)  (\tilde V_t\pi)(x)\,d\mu_t(x)
\end{split}
\]
Then, $T\in \M_1(E)$ is a normal current (in space only) with
\begin{equation}\label{ineq_massVt1}
\MM(T)\leq \int_0^1 \,dt \int_E |\tilde V_t|(x) \, d\mu_t(x).
\end{equation}
If $\eta$ is as in Theorem~\ref{th_fl2T'repr2}(ii), then $T=T_\eta$.
Moreover,
if $\tilde V_t/|\tilde V_t|$ is independent of $t$ in the sense that the exists a $\check{V}\colon \Lip_b(E)\to L^\infty(E,\mu_t)$
such that $(\tilde V_t\pi)(x,t)=(\check{V}\pi) (x,t)|\tilde V_t|(x,t)$ for all $\pi\in \Lip_b(E)$, the equality being intended
for $\mu_t\otimes dt$-a.e.\ $(x,t)\in E\times [0,1]$, then~\eqref{ineq_massVt1} becomes equality and $\eta$ represents $T$ without cancelation of mass.
\end{corollary}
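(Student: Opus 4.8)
The plan is to derive every assertion from the identity $T=T_\eta$, where $\eta$ is the measure on absolutely continuous curves furnished by Theorem~\ref{th_fl2T'repr2}(ii) and $T_\eta$ denotes the metric $1$-current canonically induced by such a measure (in the sense of~\cite{PaoSte11-acycl}). Note first that $T(f\,d\pi)$ is meaningful for every $f\,d\pi\in D^1(E)$, since $\tilde V$ has been defined on $\Lip(E)\otimes\Lip([0,1])\supset\Lip(E)$.

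\emph{Normality and the identity $T=T_\eta$.}
For $f\in\Lip_b(E)$ and $\pi\in\Lip(E)$, combining the relation~\eqref{eq_fl2repreta1} with $e_{t\#}\eta=\mu_t$ and Fubini's theorem (legitimate since $|\tilde V_t\pi|=|W\pi|\le|W|\,\lip\pi\le C(1+\overline{V})\,\Lip\pi$, which is $\mu_t\otimes dt$-integrable by~\eqref{eq:V-bar-integrability}), one gets
\begin{align*}
T_\eta(f\,d\pi)
&= \int_{C([0,1];E)} d\eta(\theta)\int_0^1 f(\theta(t))\,\frac{d}{dt}\pi(\theta(t))\,dt
 = \int_{C([0,1];E)} d\eta(\theta)\int_0^1 f(\theta(t))\,(\tilde V_t\pi)(\theta(t))\,dt\\
&= \int_0^1 dt\int_E f(x)\,(\tilde V_t\pi)(x)\,d\mu_t(x)
 = T(f\,d\pi).
\end{align*}
Since $T_\eta$ is a normal metric current on $E$, its boundary being the finite measure $e_{1\#}\eta-e_{0\#}\eta=\mu_1-\mu_0$, so is $T$. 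For the mass estimate, the chain rule bound $|(\pi\circ\theta)'|\le\lip\pi(\theta(\cdot))\,|\dot\theta|$ gives $\mu_T\le e_{\#}\big(|\dot\theta|\,d\eta\otimes dt\big)$ with $e(\theta,t):=\theta(t)$; hence, using~\eqref{eq_fl2repreta1mod1}, $e_{t\#}\eta=\mu_t$ and Fubini once more,
\[
\MM(T)\le\int_{C([0,1];E)} d\eta(\theta)\int_0^1|\dot\theta|(t)\,dt
=\int_{C([0,1];E)} d\eta(\theta)\int_0^1|\tilde V_t|(\theta(t))\,dt
=\int_0^1 dt\int_E|\tilde V_t|(x)\,d\mu_t(x),
\]
which is~\eqref{ineq_massVt1}.

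\emph{The case of $t$-independent direction.}
Assume now $(\tilde V_t\pi)(x,t)=(\check V\pi)(x)\,|\tilde V_t|(x,t)$ for $\mu_t\otimes dt$-a.e.\ $(x,t)$ and every $\pi\in\Lip_b(E)$, with $\check V$ not depending on $t$. Set $\mu(A):=\int_0^1 dt\int_A|\tilde V_t|\,d\mu_t$, a finite Borel measure on $E$; then $T(f\,d\pi)=\int_E f\,(\check V\pi)\,d\mu$, i.e.\ $T$ is the normal current induced over $(E,\mu)$ by the derivation $\check V$ (which inherits the Leibniz rule $\mu$-a.e.\ from $\tilde V_t$). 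By Proposition~\ref{prop_modVelest1a}, $|\tilde V_t|$ is the pointwise norm of $\tilde V_t$, so $|(\tilde V_t\pi)(x)|\le|\tilde V_t|(x)\,\lip\pi(x)$ and, cancelling $|\tilde V_t|$, $|\check V\pi|\le\lip\pi$ $\mu$-a.e.; hence $\mu_T\le\mu$. For the converse, Proposition~\ref{prop_modVelest1a} also gives $\sup\{|(\check V\pi)(x)|:\pi\in\A,\ \lip\pi(x)\le1\}=1$ for $\mu$-a.e.\ $x$. By a routine measurable selection together with a partition-of-unity argument, for each $\varepsilon>0$ there are finitely many $\pi_i\in\A$ with $\lip\pi_i\le1$ and a Borel partition $E=\bigcup_i A_i$ with $\check V\pi_i\ge1-\varepsilon$ $\mu$-a.e.\ on $A_i$; testing $T$ against $\sum_i\1_{A_i}\,d\pi_i$ yields
\[
\MM(T)\ge\sum_i\int_{A_i}(\check V\pi_i)\,d\mu\ge(1-\varepsilon)\,\mu(E),
\]
so $\MM(T)=\mu(E)=\int_0^1 dt\int_E|\tilde V_t|\,d\mu_t$ and~\eqref{ineq_massVt1} is an equality. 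Finally, this equality together with $T=T_\eta$, \eqref{eq_fl2repreta1mod1} and $e_{t\#}\eta=\mu_t$ gives
\[
\MM(T)=\int_0^1 dt\int_E|\tilde V_t|\,d\mu_t=\int_{C([0,1];E)} d\eta(\theta)\int_0^1|\dot\theta|(t)\,dt,
\]
which is precisely the assertion that $\eta$ represents $T$ without cancellation of mass.

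\emph{Expected main difficulty.}
The hard part will be the sharpness of the mass estimate in the second part, i.e.\ extracting the identity $\mu_T=\mu$ from the bare factorization $\tilde V_t=\check V\,|\tilde V_t|$. This will rest both on the variational characterization of the modulus $|\tilde V_t|$ provided by Proposition~\ref{prop_modVelest1a} and on the $t$-independence of $\check V$, which is what makes it possible to test $T$ against $\mu$ as a whole rather than time-slice by time-slice; without it, cancellation among different slices cannot be excluded, cf.\ Remark~\ref{rem:product-cycles}.
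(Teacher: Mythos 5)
Your first three assertions are proved essentially as in the paper: the identity $T=T_\eta$ is obtained by the same computation (insert~\eqref{eq_fl2repreta1} into $T_\eta$, use $e_{t\#}\eta=\mu_t$ and Fubini), and normality together with~\eqref{ineq_massVt1} then follow — the paper gets normality from $T=P_{E\#}T'$ and the mass bound directly from $|\tilde V_t\pi|\le|\tilde V_t|\,\Lip\pi$ via Proposition~\ref{prop_modVelest1a}, while you route both through $T_\eta$ and~\eqref{eq_fl2repreta1mod1}; either way is fine. The final deduction (equality in~\eqref{ineq_massVt1} plus $T=T_\eta$ and~\eqref{eq_fl2repreta1mod1} gives representation without cancellation) also matches the paper.

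The gap is in the sharpness of the mass bound in the $t$-independent case. You assert that Proposition~\ref{prop_modVelest1a} ``gives'' $\sup\{|(\check V\pi)(x)|\colon \pi\in\A,\ \lip\pi(x)\le1\}=1$ for $\mu$-a.e.\ $x$, and then defer everything else to a ``routine measurable selection together with a partition-of-unity argument''. Neither step is routine as stated. Proposition~\ref{prop_modVelest1a} identifies $|\tilde V_t|$ as the \emph{minimal} function $\beta$ in an a.e.\ inequality quantified over all $\pi\in\Lip(E)$; converting minimality into a pointwise supremum of an (uncountable) family of a.e.-defined functions requires essential suprema or a countable mass-realizing family — this is exactly the content of Lemma~\ref{lm_massestV1}(iii) — and the further restriction to $\pi\in\A$ is an unproved strengthening (nothing guarantees the supremum over a possibly small algebra $\A$ is still $1$; $\check V$ is defined on all of $\Lip_b(E)$, and that is where one should work, or with a countable realizing family as in Corollary~\ref{co_mcurr_densd1c}). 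In addition, in your testing step the inequality $\sum_i T(\1_{A_i}\,d\pi_i)\le\MM(T)$ is immediate only when $\Lip\pi_i\le1$ globally; with the pointwise constraint $\lip\pi_i\le1$ on $A_i$ it needs the locality refinement $\mu_{T\res d\pi}\le\lip\pi\,\mu_T$, which again is supplied by Lemma~\ref{lm_massestV1} and not by the bare axioms of metric currents. The paper avoids all of this by applying Lemma~\ref{lm_massestV1} with $\mu:=\int_0^1|\tilde V_t|\mu_t\,dt$ and $\check V$ in place of $V$: it gives $\mu_T=|\check V|\,\mu$ with $|\check V|$ the minimal density, and combining this with the minimality of $|\tilde V_t|$ from Proposition~\ref{prop_modVelest1a} applied to the factorization $\tilde V_t=\check V\,|\tilde V_t|$ forces $|\check V|=1$ $\mu$-a.e., whence $\MM(T)=\mu(E)$. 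Your scheme can be completed along these lines (most economically by invoking Lemma~\ref{lm_massestV1} itself), but as written the crucial lower bound is asserted rather than proved.
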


\begin{proof}
Clearly $T = P_{E\#} T'$, where $P_E(x,t) := x$,
and hence $T\in \M_1(E)$ is a normal current, since so it $T'\in \M_1(E\times [0,1])$.
Then~\eqref{ineq_massVt1} is immediate from the definition of mass, since $|\tilde V_t\pi|\leq |\tilde V_t|\Lip\,\pi$
for all $\pi\in \Lip(E)$
as follows from Proposition~\ref{prop_modVelest1a}.
Further, for $\eta$ as in Theorem~\ref{th_fl2T'repr2}(ii) we get
\begin{align*}
T_\eta(f\,d\pi) &= \int_{C([0,1];E)} \ld\theta\rd(f\,d\pi)\, d\eta(\theta) =
\int_{C([0,1];E)} \, d\eta(\theta)\int_0^1 f(\theta(t)) \frac{d\,}{dt}\pi(\theta(t))\,dt \\
& = \int_{C([0,1];E)} \, d\eta(\theta)\int_0^1 f(\theta(t)) (\tilde V_t\pi)(\theta(t))\,dt \quad\text{by~\eqref{eq_fl2repreta1}}\\
&=\int_0^1 \, dt \int_{C([0,1];E)} f(\theta(t)) (\tilde V_t\pi)(\theta(t))\, d\eta(\theta)
 = \int_0^1\,dt\int_E f(x)  (\tilde V_t\pi)(x)\,d\mu_t(x),
\end{align*}
so that $T=T_\eta$.
If $\tilde V_t/|\tilde V_t|=:\check{V}$ is independent of $t$, the claimed equality in~\eqref{ineq_massVt1}
follows applying Lemma~\ref{lm_massestV1}, with
$\mu:=\int_0^1 |\tilde V_t|\mu_t\, dt$ and $\check{V}$ instead of $V$. Finally, in this case
\begin{align*}
\MM(T)&\leq \int_{C([0,1];E)} \ell(\theta)\, d\eta(\theta) =
\int_{C([0,1];E)} \, d\eta(\theta)\int_0^1 |\dot{\theta}|(t)) \,dt \\
& = \int_{C([0,1];E)} \, d\eta(\theta)\int_0^1 |\tilde V_t|(\theta(t))\,dt \quad\text{by~\eqref{eq_fl2repreta1mod1}}\\
&=\int_0^1\,dt\int_E |\tilde V_t|(x)\,d\mu_t(x) = \MM(T),
\end{align*}
so that $\eta$ represents $T$ without cancelation of mass as claimed.
\end{proof}

\begin{remark}\label{rem_currconteq1}
In general, inequality in \eqref{ineq_massVt1} can be strict, e.g.\
when $E:=\R$ and
\[
\begin{array}{ll}
V_t(x) =\left\{
  \begin{array}{ll}
     e_1, & t\in [0, 1/2]\\
     -e_1, & t\in (1/2, 1]
   \end{array}
\right.
&
\mu_t(x) =\left\{
  \begin{array}{ll}
     \delta_{t}, & t\in [0, 1/2]\\
     \delta_{1-t}, & t\in (1/2, 1]
   \end{array}
\right.
\end{array}
\]
one has $T=0$, while $\int_0^1 \,dt \int_E |V_t|(x) \, d\mu_t(x)=1$.
\end{remark}

The rest of the section contains the proofs of Theorems~\ref{th_fl2T'repr2} and~\ref{thm:radon-nikodym}.

\begin{proof}[Proof of Theorem~\ref{th_fl2T'repr2}.]
We prove both statements at once, in several steps.

{\em Step 1}.
Let $\eta$ be as in Proposition~\ref{prop_fl2T'repr1} (with $T:=T'$). Then
\begin{align*}
 \int_{E\times[0,1]}  f W\pi\,d\mu_t\,dt   & = T'(f\,d\pi) =
  \int_{C([0,1]; E)}\, d\eta(\theta)\int_0^1 f(\theta(t),t) \frac{d\,}{dt}\pi(\theta(t),t)\, dt\\
& =
\int_0^1 \,dt \int_E f(x,t) \,d\mu_t(x) \int_{e_t^{-1}(x)} \frac{d\,}{dt}\pi(\theta(t),t)\, d\nu_x^t(\theta).
\end{align*}
Since the above identities hold for every  bounded Borel $f: E\times [0,1] \to \R$, we deduce
\begin{equation}\label{eq:Wpi-disint1}
(W \pi)(x,t)= \int_{e_t^{-1}(x)}
\frac{d\,}{dt}\pi(\theta(t),t) \, d\nu_x^t(\theta).
\end{equation}
for $\mu_t\otimes dt$-a.e.\ $(x,t)\in E\times [0,1]$.
By Lemma~\ref{lm_massestV1}(iii) with $W$ instead of $V$, $E\times [0,1]$ instead of $E$, $T'$ instead of $T$ and $\mu_t\otimes dt$ instead of $\mu$, one has
\[
|W|(x,t) =
\sup_{\pi\in \mathcal{F}} (W\pi)(x,t) \quad \text{$\mu_t\otimes dt$-a.e.\ in $E \times [0,1]$,}
\]
whenever $\mathcal{F}\subset \Lip_1(E\times [0,1])$ is a countable set of functions with
$\mu_{T'}=\sup\,\{\mu_{T'\res d\pi}\colon \pi \in \mathcal{F}\}$.
This gives
\begin{equation}\label{eq_fl2pinuxt2a1}
\begin{aligned}
\MM(T') & =  \int_0^1\, dt \int_E |W|(x,t)\,d\mu_t(x)= \int_0^1\, dt \int_E \,d\mu_t(x) \sup_{\pi\in \mathcal{F}} (W\pi)(x,t) \\
&=
 \int_0^1\, dt \int_E \,d\mu_t(x) \sup_{\pi\in \mathcal{F}} \int_{e_t^{-1}(x)}
\frac{d\,}{dt}\pi(\theta(t),t) \, d\nu_x^t(\theta)\quad \text{by~\eqref{eq:Wpi-disint1}}\\
&\leq
 \int_0^1\, dt \int_E \,d\mu_t(x) \int_{e_t^{-1}(x)}
\sup_{\pi\in \mathcal{F}} \frac{d\,}{dt}\pi(\theta(t),t) \, d\nu_x^t(\theta)\\
& \leq
 \int_0^1\, dt \int_E \,d\mu_t(x) \int_{e_t^{-1}(x)}
 \sqrt{1+|\dot{\theta}|(t)^2} \, d\nu_x^t(\theta)\\
& =
\int_0^1\, dt \int_{C([0,1];E)} \,d\eta(\theta)
\sqrt{1+|\dot{\theta}|(t)^2}
= \MM(T')\quad \text{by  \eqref{eq:mass-T'}},
\end{aligned}
\end{equation}
so that in fact all the above inequalities are equalities. Thus
\begin{equation}\label{eq_fl2pinuxt3c}
\sup_{\pi\in \mathcal{F}}
\int_{e_t^{-1}(x)} \frac{d\,}{dt}\pi(\theta(t),t)\, d\nu_x^t(\theta)
 =
\int_{e_t^{-1}(x)} \sup_{\pi\in \mathcal{F}}
 \frac{d\,}{dt}\pi(\theta(t),t)\, d\nu_x^t(\theta)
=\int_{e_t^{-1}(x)}
\sqrt{1+|\dot{\theta}|(t)^2} \, d\nu_x^t(\theta).
\end{equation}

{\em Step 2}.
Suppose now that $E=X'$ is a Banach space dual to some separable Banach space $X$.
By Corollary~\ref{co_mcurr_densd1c} one may assume that $\mathcal{F}$ is a family of linear functionals of the
form $\pi(x)=\langle a, x \rangle$, for $a\in D$ where  $D$ is a countable dense subset of a unit ball $B_1(0)\subset X\times \R$.
By Lemma~\ref{lm_charODE_ban1}(A) with 
$X\times \R$ instead of $X$, $\R$ instead of $U$ and $E$ instead of $V$,
$\phi_{\#}\eta$ instead of $\eta$, $\phi_{\#}\nu_x^t$ instead of $\nu_x^t$, where
$(\phi(\theta))(t):= (\theta(t),t)$, one gets from~\eqref{eq_fl2pinuxt3c} for $\mu_t\otimes dt$-a.e.\ $(x,t)\in E\times [0,1]$
that $\sqrt{1+|\dot{\theta}|(t)^2}$
is constant
for $\phi_{\#}\nu_x^t$-a.e.\ $\theta'$ with $\theta'(t):=(\theta(t),t)$, hence
for $\nu_x^t$-a.e.\ $\theta$.
In particular, from~\eqref{eq_fl2pinuxt2a1} this gives
\[
\sqrt{1+|\dot{\theta}|(t)^2} = |W|(\theta(t),t)
\]
for $\eta$-a.e.\ $\theta\in C([0,1];E)$, which is~\eqref{eq_fl2repreta1mod1}.
Note that this is true whenever $\eta$ represents $T'$ without cancellation of mass
(and satisfies $e_{t\#}\eta=\mu_t$).

Further,~\eqref{eq_fl2pinuxt3c} becomes
\begin{equation}\label{eq_fl2pinuxt3d}
\sup_{\pi\in \mathcal{F}}
\int_{e_t^{-1}(x)} \frac{d\,}{dt}\pi(\theta(t),t)\, d\nu_x^t(\theta)
 =
\int_{e_t^{-1}(x)} \sup_{\pi\in \mathcal{F}}
 \frac{d\,}{dt}\pi(\theta(t),t)\, d\nu_x^t(\theta)
=
\sqrt{1+|\dot{\theta}|(t)^2},
\end{equation}
and therefore, if the norm in $E$ is strictly convex, then
by Lemma~\ref{lm_charODE_ban1}(B) applied again with 
$X\times \R$ instead of $X$ and
$\phi_{\#}\eta$ instead of $\eta$ (so that one has $\phi_{\#}\nu_x^t$ instead of $\nu_x^t$),
we get 
\begin{equation}\label{eq_fl2repreta2c}
\frac{d\,}{dt} \pi(\theta(t),t)= (W\pi)(\theta(t),t)
\end{equation}
for $\phi_{\#}\eta$-a.e.\ $\theta'$ with $\theta'(t):=(\theta(t),t)$ and
a.e.\ $t\in [0,1]$, hence
for $\eta$-a.e.\ $\theta$ and
a.e.\ $t\in [0,1]$.

{\em Step 3}.
To address the general case, we let $\hat{E}:=\ell^\infty$ 
and denote by
$j\colon E\to \ell^\infty$ the standard isometric embedding of $E$ into $\ell^\infty$.
Let $\eta$ be an arbitrary Borel measure over absolutely continuous
curves representing $T'$ without cancellation of mass and satisfying $e_{t\#}\eta=\mu_t$ for every $t \in [0,1]$. Then
$\psi_{\#}\eta$ with $\psi(\theta)(t):=j(\theta(t))$ represents in the same way $j'_{\#}T'$, where
$j'\colon E\times [0,1]\to \hat{E}\times [0,1]$, $j'(x,t):=(j(x),t)$, and $e_{t\#}\eta=j_{\#}\mu_t$,
so that, by Step~2
applied with $j'_{\#}T'$ instead of $T'$
we get
\begin{equation}\label{eq_metrvel1a}
|\dot{\hat \theta}|(t)= (|\hat W|^2(\hat \theta(t),t)-1)^{1/2} \quad \text{for $\psi_{\#}\eta$-a.e.\ $\hat\theta\in C([0,1];\hat{E})$ and
a.e.\ $t\in [0,1]$,}
\end{equation}
where $\hat W$ is defined by
\[
j'_{\#}T' (f\,d\pi)=\int_{\hat E\times [0,1]} f \hat W\pi\, d(j_{\#}\mu_t)\otimes dt.
\]
Since $j$ (hence also $j'$) is an isometry, then 
\begin{align*}
|\hat W|(j(x),t) & = |\hat W|(j'(x,t))= \frac{d\mu_{j'_{\#}T'}}{dj'_{\#}(\mu_t\otimes dt)} (j'(x,t))= \frac{dj'_{\#}\mu_{T'}}{dj'_{\#}(\mu_t\otimes dt)} \\
& = \frac{d\mu_{T'}}{d\mu_t\otimes dt} (x,t)= |W|(x,t),
\end{align*}
hence
one gets~\eqref{eq_fl2repreta1mod1}
holds for $\eta$ as claimed.

Further, we may equip $\ell^\infty$ with some strictly convex dual norm
equivalent to the standard one $\|\cdot\|_\infty$, 
so that
$j$ becomes bi-Lipschitz as a map into $\hat{E}:=\ell^\infty$.
Step~2 then gives the existence of some Borel measure $\hat \eta$  concentrated over absolutely continuous curves from $C([0,1];\hat{E})$ and
representing $j'_{\#}T'$ without cancellation of mass, with
$e_{t\#}\hat\eta=j_{\#}\mu_t$ (in fact, any measure with such properties will suit), such that
for every $\pi\in \Lip(\hat{E}\times [0,1])$ the equality
\begin{equation}\label{eq_fl2repreta3c}
\begin{aligned}
\frac{d\,}{dt} & \pi (\hat \theta(t),t) = (\hat W \pi)(\hat \theta(t),t) \quad \text{for $\hat\eta$-a.e.\ $\hat\theta\in C([0,1];\hat{E})$ and
a.e.\ $t\in [0,1]$}
\end{aligned}
\end{equation}
holds.
 Further, from the definition of $W$ and $\hat W$
one has
\begin{align*}
(\hat W \pi)(\hat x,t) & = (W (\pi\circ j'))(j^{-1}(\hat{x}),t) \quad \mbox{for $(j_{\#}\mu_t )\otimes dt$-a.e.\ $(\hat x,t)$}.
\end{align*}
Thus one gets~\eqref{eq_fl2repreta2c} with
$\eta:=\psi^{-1}_{\#}\hat\eta$ from~\eqref{eq_fl2repreta3c}.
At last,
the equality~\eqref{eq_fl2repreta1} follows now from~\eqref{eq_fl2repreta2c} with $\pi=\pi(x)$ (i.e.\ independent of $t$). \end{proof}

\begin{proof}[Proof of Theorem~\ref{thm:radon-nikodym}.] In this case, it is sufficient to modify the above proof
only in the last step, which we do without changing notation and letting $\hat{E}$ and $j$ be as in the statement being proven.
Since $\eta'$-a.e.\ curve $t\in[0,1]\mapsto (\theta(t),t)$ belongs to the support of $\mu_{T'}$, which is separable,
we may further restrict $\hat{E}$ to the image of the latter under the embedding $j$, so that $\hat{E}$ becomes separable, and, as such, may be endowed with the equivalent strictly convex norm (note that with this renorming $j$ remains bi-Lipschitz).
Now, since $\hat{E}$ has Radon-Nikodym property, by~\cite[proposition~5.1]{PaoSte14-flow} there is a Borel map
$\hat W\colon \hat{E}\times\R\to \hat{E} \times \R$ such that
\begin{align*}
    \hat T'(f\,d h)&=\int_0^1\,dt \int_{\hat{E}} f(\hat x,t) \langle \hat W(\hat x,t),  dh( \hat x,t)\rangle\, d\hat \mu_t(\hat x)
\end{align*}
whenever $h\in Q^1(\hat{E}\times \R)$ (in other words, with some abuse of the notation,
$(\hat Wh)(x,t)=\langle \hat W(\hat x,t),  d\hat \pi( \hat x,t)\rangle$),
$\langle\cdot,\cdot\rangle$ standing for the duality between $\hat{E}\times R$ and $\hat{E}'\times R$.  Moreover, we may write
\[
dh = \left(d_x h (\hat x,t), \partial_t h(\hat x,t)\right),
\]
so that the relationship
\[
(\hat V_t h)(\cdot,t)(\hat x)+\partial_t h(\hat x,t)= \langle \hat W(\hat x,t),  dh(\hat x,t)\rangle
\]
valid for $\hat\mu_t\otimes dt$-a.e.\ $(\hat x, t)$ provides the existence of a Borel function
$\hat V \colon \hat{E}\times [0,1]\to \R$ satisfying
$(\hat V_t h)(\cdot,t)(\hat x)= \langle \hat V(\hat x,t),  d_x h(\hat x,t)\rangle$
$\hat\mu_t\otimes dt$-a.e.,
now $\langle\cdot,\cdot\rangle$ standing for the duality between $\hat{E}$ and $\hat{E}'$.
Thus in particular by Lemma~\ref{lm_charODE_ban1}(C) for an arbitrary $h$ from 
the unit ball of $\hat{E}'$, from
\begin{align*}
\left\langle \frac{d\,}{dt} j(\theta(t)), h\right\rangle &=
\frac{d\,}{dt} \langle j(\theta(t)), h\rangle  = (\hat V_t h)(j(\theta(t)))= \langle \hat V_t (j(\theta(t))), h\rangle,
\end{align*}
which holds for $\eta\otimes dt$-a.e.\ $(\theta,t)\in C([0,1];E)\times [0,1]$ (independent of $h$),
we get the thesis for $\hat V_t\circ j$ instead of $\hat V_t$, so to conclude it is enough
to rename $\hat V_t$.
\end{proof}

Below the lemmata used in the above proofs are collected. 
%
%
%
The key auxiliary assertion for our construction is Lemma~\ref{lm_charODE_ban1} below
which is also of some independent interest. In fact, as a side corollary of this Lemma (and in particular of Lemma~\ref{lm_piauxcurrT1} used in the proof of the latter), we have that the properties of the measure $\eta$
we used in the proofs of our main results, namely, the relationship~\eqref{eq_fl2pinuxt3c}, and the fact that metric velocities $|\dot{\theta}|(t)$ of the curves are $\nu_x^t$-a.e.\ constant over $e_t^{-1}(x)$,  not only come from the fact that $\eta$ represents $T'$ without cancellation of mass, but in fact, are sufficient to define without cancellation of mass some normal $1$-current.

\begin{lemma}\label{lm_charODE_ban1}
Let $E$ be a Banach space.
Assume also that there is a measure $\eta$ over $C([0,1]; E)$ concentrated over absolutely continuous curves
satisfying 
$\eta=(e_{t\#}\eta)\otimes \nu_x^t$, $\nu_x^t$ being probability measures concentrated over
$e_t^{-1}(x)$ such that for $(e_{t\#}\eta)\otimes dt$-a.e. $(x,t)\in E\times [0,1]$
one has
\begin{equation}\label{eq_fl2pinuxt3c1}
\sup_{\pi\in \mathcal{F}}
\int_{e_t^{-1}(x)} \frac{d\,}{dt}\pi(\theta(t))\, d\nu_x^t(\theta)
 =\int_{e_t^{-1}(x)}|\dot{\theta}|(t)\, d\nu_x^t(\theta)
\end{equation}
for some countable family of functions $\mathcal{F}\subset \Lip_1(E)$.
The following statements hold.
\begin{itemize}
\item[(A)] If $E=X'$ is a dual of some separable Banach space $X$ and $\mathcal{F}$
is a family of linear functionals over $X'$ from $X$,
dense in the unit ball of $X$, and, moreover,
$E=U\times V$, with $U$ and $V$ Banach spaces with norms $\|\cdot\|_U$, $\|\cdot\|_V$ respectively, and the norm $\|\cdot\|$
in $E$ satisfies $\|(x_U,x_V)\|=\Phi(\|x_U\|_U,\|x_V\|_V)$ for some nondecreasing strictly convex function
$\Phi\colon \R^2\to \R^+$, then
for $e_{t\#}\eta$-a.e.\ $x\in E$ and a.e.\ $t\in [0,1]$ one has that
\begin{align*}
|\dot{\theta}_U|(t) & =\int_{e_t^{-1}(x)}|\dot{\sigma}_U|(t)\, d\nu_x^t(\sigma), \quad
|\dot{\theta}_V|(t)=\int_{e_t^{-1}(x)}|\dot{\sigma}_V|(t)\, d\nu_x^t(\sigma)
\end{align*}
for $\nu_x^t$-a.e.\ $\theta$, and in particular,
$|\dot{\theta}_U|(t)$, $|\dot{\theta}_V|(t)$ and hence $|\dot{\theta}|(t)$ are $\nu_x^t$-a.e.\ constant over $e_t^{-1}(x)$.
\item[(B)] If $E=X'$ is still a dual of some separable Banach space $X$ and $\mathcal{F}$
is a family of linear functionals over $X'$ from $X$,
dense in the unit ball of $X$, but now
$E$ is equipped with a strictly convex norm,
and $|\dot{\theta}|(t)$ is $\nu_x^t$-a.e.\ constant over $e_t^{-1}(x)$,
then
\begin{equation}\label{eq_fl2repreta1c}
\frac{d\,}{dt} \pi(\theta(t))= (W\pi)(\theta(t))
\quad \text{where }
(W\pi)(x,t):=
\int_{e_t^{-1}(x)}
\frac{d\,}{dt}\pi(\sigma(t)) \, d\nu_x^t(\sigma)
\end{equation}
for $\eta\otimes dt$-a.e.\ $(\theta,t)\in C([0,1];E)\times [0,1]$ (possibly depending on $\pi$)
and for every $\pi\in \Lip(E)$.
\item[(C)] Finally, if $E$ is a separable space with Radon-Nikodym property and $\mathcal{F}\subset Q^1(E)$, and
$|\dot{\theta}|(t)$ is $\nu_x^t$-a.e.\ constant over $e_t^{-1}(x)$, then~\eqref{eq_fl2repreta1c} holds for every $\pi\in Q^1(E)$. Moreover, in this case $\dot{\theta}(t)$ is $\nu_x^t$-a.e.\ constant over $e_t^{-1}(x)$, and the set of $(\theta,t)\in C([0,1];E)\times [0,1]$ of full measure $\eta\otimes dt$ such that~\eqref{eq_fl2repreta1c} holds may be chosen independent of $\pi\in Q^1(E)$.
\end{itemize}
\end{lemma}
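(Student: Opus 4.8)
The plan is to deduce all three statements from the equality case in a chain of Jensen-type inequalities forced by hypothesis~\eqref{eq_fl2pinuxt3c1}. First I record the pointwise input: for $\pi\in\Lip_1(E)$ and $\theta$ absolutely continuous, $t\mapsto\pi(\theta(t))$ is absolutely continuous with $\frac{d}{dt}\pi(\theta(t))\le|\dot\theta|(t)$ a.e., and — because $\mathcal F$ either consists of linear functionals from a norming predual $X$ of $E=X'$, dense in the unit ball of $X$, or consists of $Q^1(E)$ functions whose differentials are correspondingly dense in the unit ball of $E'$ — one has $\sup_{\pi\in\mathcal F}\frac{d}{dt}\pi(\theta(t))=|\dot\theta|(t)$ for $\eta$-a.e.\ $\theta$ and a.e.\ $t$; this uses weak-$*$ a.e.\ differentiability of absolutely continuous curves into a dual space (respectively, Fréchet a.e.\ differentiability in a space with the Radon--Nikodym property), and is where Lemma~\ref{lm_piauxcurrT1} enters. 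Combined with~\eqref{eq_fl2pinuxt3c1} this gives, for $(e_{t\#}\eta)\otimes dt$-a.e.\ $(x,t)$, the exchange of supremum and integral $\sup_{\pi\in\mathcal F}\int_{e_t^{-1}(x)}\frac{d}{dt}\pi(\theta(t))\,d\nu_x^t=\int_{e_t^{-1}(x)}\sup_{\pi\in\mathcal F}\frac{d}{dt}\pi(\theta(t))\,d\nu_x^t=\int_{e_t^{-1}(x)}|\dot\theta|(t)\,d\nu_x^t$.

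For (A) I would disintegrate a test functional $a=(a_U,a_V)$ and write $\frac{d}{dt}\langle a,\theta(t)\rangle=\langle a_U,\dot{\theta}_U^{*}(t)\rangle+\langle a_V,\dot{\theta}_V^{*}(t)\rangle$. Introducing the weak-$*$ barycenters $m_U(x,t):=\int\dot{\theta}_U^{*}(t)\,d\nu_x^t$, $m_V(x,t):=\int\dot{\theta}_V^{*}(t)\,d\nu_x^t$, density of $\mathcal F$ turns the exchanged equality into $\int_{e_t^{-1}(x)}\Phi(|\dot\theta_U|(t),|\dot\theta_V|(t))\,d\nu_x^t=\|(m_U,m_V)\|=\Phi(\|m_U\|_U,\|m_V\|_V)$. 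Since $\|m_U\|_U\le\bar u:=\int|\dot\theta_U|(t)\,d\nu_x^t$ and $\|m_V\|_V\le\bar v$ while $\Phi$ is nondecreasing, and since by Jensen $\int\Phi(|\dot\theta_U|,|\dot\theta_V|)\,d\nu_x^t\ge\Phi(\bar u,\bar v)$, all inequalities in the chain $\Phi(\bar u,\bar v)\le\int\Phi(|\dot\theta_U|,|\dot\theta_V|)\,d\nu_x^t=\Phi(\|m_U\|,\|m_V\|)\le\Phi(\bar u,\bar v)$ are equalities. Strict convexity of $\Phi$ in the equality case of Jensen then forces $(|\dot\theta_U|(t),|\dot\theta_V|(t))$ to be $\nu_x^t$-a.e.\ proportional to $(\bar u,\bar v)$, and triviality of the proportionality constant — guaranteed because in the situations of interest one component (the time component) has metric derivative $1$ — yields the two displayed identities and the $\nu_x^t$-a.e.\ constancy of $|\dot\theta_U|(t)$, $|\dot\theta_V|(t)$, hence of $|\dot\theta|(t)=\Phi(\bar u,\bar v)$.

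For (B) and (C), with $c(x,t):=|\dot\theta|(t)$ now assumed $\nu_x^t$-a.e.\ constant, the exchanged equality has constant right-hand side, and an elementary $L^1$-approximation ($\sup_k\int g_{\pi_k}=\int\sup_k g_{\pi_k}=c$ with $g_{\pi_k}\le c$ forces $\int g_{\pi_k}\to c$ along a subsequence) produces $\pi_k\in\mathcal F$ with $\frac{d}{dt}\pi_k(\theta(t))\to c(\theta(t),t)$ for $\nu_x^t$-a.e.\ $\theta$; the associated functionals $a_k$ (resp.\ differentials $d\pi_k(x)$) then norm simultaneously every $\dot{\theta}^{*}(t)$ (resp.\ $\dot\theta(t)$) and the barycenter $m(x,t):=\int\dot{\theta}^{*}(t)\,d\nu_x^t$. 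From $\|\tfrac{1}{2}(\dot{\theta}^{*}(t)+m(x,t))\|\ge\limsup_k\langle a_k,\tfrac{1}{2}(\dot{\theta}^{*}(t)+m(x,t))\rangle=c=\|\dot{\theta}^{*}(t)\|=\|m(x,t)\|$ and strict convexity of the (possibly renormed) $E$ one concludes $\dot{\theta}^{*}(t)=m(x,t)$ for $\nu_x^t$-a.e.\ $\theta$. For a linear $\pi$ this directly gives $\frac{d}{dt}\pi(\theta(t))=\langle a,m(\theta(t),t)\rangle=(W\pi)(\theta(t),t)$. In case (C), since $E$ has the Radon--Nikodym property the weak-$*$ derivative is the norm derivative, so $\dot\theta(t)=m(\theta(t),t)$ is itself $\nu_x^t$-a.e.\ constant and $\frac{d}{dt}\pi(\theta(t))=\langle\dot\theta(t),d\pi(\theta(t))\rangle=\langle m(\theta(t),t),d\pi(\theta(t))\rangle=(W\pi)(\theta(t),t)$ for every $\pi\in Q^1(E)$, with the exceptional set $\{(\theta,t):\dot\theta(t)\ne m(\theta(t),t)\ \text{or}\ \theta\ \text{not differentiable at}\ t\}$ independent of $\pi$. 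For a general Lipschitz $\pi$ in (B) one first upgrades ``$\dot{\theta}^{*}(t)=m(x,t)$ and $|\dot\theta|(t)=\|m(x,t)\|$'' to genuine norm differentiability $\dot\theta(t)=m(\theta(t),t)$ via a Kadec--Klee-type rotundity of the space, after which $\frac{d}{dt}\pi(\theta(t))$ is the directional derivative of $\pi$ at $\theta(t)$ along $m(\theta(t),t)$, the same for $\nu_x^t$-a.e.\ $\theta$, hence equal to $(W\pi)(\theta(t),t)$.

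The step I expect to be the main obstacle is this last passage — from linear, respectively $Q^1$, test functions to arbitrary Lipschitz ones in part (B) — since a merely strictly convex dual space need neither enjoy the Radon--Nikodym property (so curves need not be strongly differentiable) nor the weak-$*$ Kadec--Klee property, so one must either build the extra rotundity into the equivalent norm used in the reductions of Section~\ref{sec:general-sp} or otherwise extract strong differentiability of the representing curves; it is precisely here that the concrete choice of distance on the ambient space plays a role. A secondary technical point is to arrange that all the $\nu_x^t$-null exceptional sets depend measurably on $(x,t)$, so that they assemble into a single $\eta$-null set of curves; this is a routine measurable-selection argument in the disintegration $\eta=(e_{t\#}\eta)\otimes\nu_x^t$.
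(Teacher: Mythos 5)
Your treatment of~(A), of the linear-test-function part of~(B), and of~(C) is essentially the paper's: rewrite the left side of~\eqref{eq_fl2pinuxt3c1} as the dual norm of the weak$^*$ barycenter of $\dot\theta_w(t)$, obtain the equality $\bigl\| \int \dot\theta_w(t)\,d\nu_x^t \bigr\|' = \int \|\dot\theta_w(t)\|'\,d\nu_x^t$, and exploit the equality case (the paper does this via Lemma~\ref{lm_revJensen_wkst1} and Remark~\ref{rem_bochn_conv1}; your ``averaging the norming functionals against the midpoint'' argument is an equivalent use of strict convexity). Two small inaccuracies: in~(A), equality in Jensen for a \emph{strictly convex} $\Phi$ forces the pair $(|\dot\theta_U|(t),|\dot\theta_V|(t))$ to be $\nu_x^t$-a.e.\ equal to its mean, so your detour through ``proportionality'' plus the assertion that ``the time component has metric derivative $1$'' is both unnecessary and illegitimate here, since the latter fact belongs to the application of the lemma, not to its hypotheses; and in~(C) the paper never needs the differentials of $\mathcal{F}$ to be dense in the unit ball of $E'$ (an assumption the statement does not grant you): it only uses $\mathcal{F}\subset Q^1(E)\cap\Lip_1(E)$ to bound the supremum in~\eqref{eq_fl2pinuxt3c1} by $\bigl\|\int\dot\theta\,d\nu_x^t\bigr\|$ and then invokes the Bochner converse Jensen with strict convexity.

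The genuine gap is exactly the step you flag: the extension of~\eqref{eq_fl2repreta1c} in part~(B) from linear $\pi$ to arbitrary $\pi\in\Lip(E)$. Your proposed route (upgrade $\dot\theta_w(t)=m(\theta(t),t)$ to norm differentiability via a Kadec--Klee-type property, then identify $\frac{d}{dt}\pi(\theta(t))$ with a directional derivative of $\pi$ along $m$) fails on two counts: a strictly convex dual renorming of $\ell^\infty$ gives you neither the Radon--Nikodym property nor weak$^*$ Kadec--Klee rotundity, so strong differentiability of the curves is simply unavailable; and even if $\dot\theta(t)$ existed in norm, a merely Lipschitz $\pi$ need not possess directional derivatives, and the a.e.\ existing derivative $\frac{d}{dt}\pi(\theta(t))$ is not a function of the point $\theta(t)$ and the velocity alone, so it cannot be declared ``the same for $\nu_x^t$-a.e.\ $\theta$'' on these grounds. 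The paper's mechanism is different and avoids pointwise differentiation of $\pi$ altogether: the data define a normal current $T_t$ represented without cancellation of mass (Lemma~\ref{lm_piauxcurrT1}); the chain rule and locality for this current show that the class of $\pi$ for which~\eqref{eq_fl2repreta1c} holds is stable under finite maxima/minima (Lemma~\ref{lm_piwedge1}), and the weak$^*$ $L^\infty$-continuity of $\pi\mapsto W\pi$, applied to the integrated form of~\eqref{eq_fl2repreta1c}, shows stability under pointwise limits with uniformly bounded Lipschitz constants (Lemma~\ref{lm_pilim1}). Combining these with the density results (Lemma~\ref{lm_mcurr_densd1b}: distance functions $\|\cdot-y\|'$ are pointwise limits of finite maxima of affine functions; Lemma~\ref{lm_mcurr_densd1a}: every $\pi\in\Lip_1(E)$ is a pointwise limit of finite minima of $c_i+d(\cdot,y_i)$) yields the claim for all Lipschitz $\pi$, with the exceptional set allowed to depend on $\pi$ --- which is precisely why the limit argument closes and why no extra rotundity of the norm is ever needed. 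Without some substitute for this lattice-and-limit bootstrap, your proof of~(B) is incomplete.
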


\begin{proof}
Let
$E=X'$ be a dual of some separable Banach space $X$ and $\mathcal{F}$
be a family of linear functionals of the
form $\pi(x)=\langle a, x \rangle$
with $a\in D$ a countable dense subset of a unit ball $B_1(0)\subset X$.  Then
\begin{align*}
\sup_{\pi\in \mathcal{F}}  \int_{e_t^{-1}(x)} \frac{d\,}{dt}\pi(\theta(t))\, d\nu_x^t(\theta) & =
\sup_{a\in D} \int_{e_t^{-1}(x)} \left\langle
a, \dot{\theta}_w(t)\right\rangle\, d\nu_x^t(\theta) \\
&=
\sup_{a\in D} \left\langle a, \int_{e_t^{-1}(x)}
\dot{\theta}_w(t)\, d\nu_x^t(\theta) \right\rangle = \left\| \int_{e_t^{-1}(x)} \dot{\theta}_w(t)\, d\nu_x^t(\theta) \right\|^\prime,
\end{align*}
where $\dot{\theta}_w$ stands for the weak$^*$ derivative of $\theta$, the integrals
of $\dot{\theta}_w$ being intended in the weak$^*$ sense (see section~\ref{sec_wkstInt}).
Thus, from~\eqref{eq_fl2pinuxt3c1}, recalling that $|\dot{\theta}|(t)=\|\dot{\theta}_w(t)\|^\prime$ for
a.e.\ $t\in [0,1]$,
 one has
\begin{equation}\label{eq_normint1wk1}
\left\| \int_{e_t^{-1}(x)}\dot{\theta}_w(t)\, d\nu_x^t(\theta) \right\|^\prime =  \int_{e_t^{-1}(x)}\left\|\dot{\theta}_w(t)\right\|^\prime\, d\nu_x^t(\theta).
\end{equation}

In case~(A) we get by monotonicity and convexity of $\Phi$ the chain of estimates
\begin{align*}
\left\| \int_{e_t^{-1}(x)}\dot{\theta}_w(t)\, d\nu_x^t(\theta) \right\|^\prime & =
\Phi\left( \left\| \int_{e_t^{-1}(x)}(\dot{\theta}_w(t))_U\, d\nu_x^t(\theta) \right\|_U, \left\| \int_{e_t^{-1}(x)}(\dot{\theta}_w(t))_V\, d\nu_x^t(\theta) \right\|_V \right)\\
&\leq
\Phi\left(  \int_{e_t^{-1}(x)}\left\|(\dot{\theta}_w(t))_U\right\|_U\, d\nu_x^t(\theta) , \int_{e_t^{-1}(x)}\left\|(\dot{\theta}_w(t))_V\right\|_V\, d\nu_x^t(\theta)\right)\\
&\leq
\int_{e_t^{-1}(x)} \Phi\left( \left\|(\dot{\theta}_w(t))_U\right\|_U,
\left\|(\dot{\theta}_w(t))_V\right\|_V\right)\, d\nu_x^t(\theta)\\
&= \int_{e_t^{-1}(x)}\left\|\dot{\theta}_w(t)\right\|^\prime\, d\nu_x^t(\theta),
\end{align*}
where the second estimate follows from Jensen inequality.
In view of~\eqref{eq_normint1wk1} all the above inequalities are equalities, and minding strict convexity of $\Phi$, from the fact that Jensen inequality is an equality we get
\begin{equation}\label{eq_thetUV1}
\|(\dot{\theta}_w)_U\|_U(t)=\int_{e_t^{-1}(x)}\|(\dot{\sigma}_w(t))_U\|_U\, d\nu_x^t(\sigma), \quad
\|(\dot{\theta}_w)_V\|_V(t)=\int_{e_t^{-1}(x)}\|(\dot{\sigma}_w(t))_V\|_V\, d\nu_x^t(\sigma),
\end{equation}
and hence, minding that metric derivative of a curve is
a.e.\ equal to the norm of its weak derivative,
we get the claim~(A).

In case~(B), recalling the strict convexity of the norm in $E$, from~\eqref{eq_normint1wk1} with the help of Lemma~\ref{lm_revJensen_wkst1} (with $e_t^{-1}(x)$, $\nu_x^t$ and $\theta\mapsto \dot{\theta}_w(t)$ instead of $\Omega$,
$\mu$, and $f$ respectively), together
with the fact that $\|\dot{\theta}_w(t)\|^\prime = |\dot{\theta}|(t)$
is constant over $e_t^{-1}(x)$, we obtain
\[
\frac{d\,}{dt}\pi(\theta(t)) = \int_{e_t^{-1}(x)} \frac{d\,}{dt}\pi(\sigma(t))\, d\nu_x^t(\sigma)
\]
for $\nu_x^t$-a.e.\ $\theta$ and all
$\pi(x)= \langle a,x \rangle +b$, $a\in D$, $b\in \R$. 
This gives~\eqref{eq_fl2repreta1c}
for all such $\pi$.
For $\pi(x)=\|x-y\|^\prime$
for every $y\in E$,
recalling Lemma~\ref{lm_mcurr_densd1b}, we apply Lemmata~\ref{lm_piwedge1} and~\ref{lm_pilim1}
to obtain~\eqref{eq_fl2repreta1c} for such $\pi$ ad hence also for $\pi=a+\|x-y\|^\prime$, $a\in \R$.
Now recalling Lemma~\ref{lm_mcurr_densd1a}(i) (with $g_j(x,y):=\|x-y\|$) we apply again Lemmata~\ref{lm_piwedge1} and~\ref{lm_pilim1}
to obtain~\eqref{eq_fl2repreta1c} for a generic $\pi\in \Lip_1(E)$.

Finally, in case~(C) recalling that $\mathcal{F}\subset Q^1(E)\cap\Lip_1(E)$, one has
\begin{align*}
\sup_{\pi\in \mathcal{F}}
\int_{e_t^{-1}(x)} \frac{d\,}{dt}\pi(\theta(t))\, d\nu_x^t(\theta) & = \sup_{\pi\in \mathcal{F}} \int_{e_t^{-1}(x)} \left\langle
\dot{\theta}(t), d\pi(\theta(t))\right\rangle\, d\nu_x^t(\theta) \\
&=
\sup_{\pi\in \mathcal{F}} \left\langle \int_{e_t^{-1}(x)}
\dot{\theta}(t)\, d\nu_x^t(\theta), d\pi(x) \right\rangle \leq \left\| \int_{e_t^{-1}(x)} \dot{\theta}(t)\, d\nu_x^t(\theta) \right\|,
\end{align*}
where $\dot{\theta}$ stands for the (strong) derivative of $\theta$,
which exists because of the Radon-Nikodym property,
the integrals
of $\dot{\theta}$ being now intended in the sense of Bochner (it is to see that $\dot{\theta}$ are Bochner integrable that we use separability of $E$).
On the other hand, from~\eqref{eq_fl2pinuxt3c1}, recalling that $\|\dot{\theta}(t)\| = |\dot{\theta}|(t)$ for
a.e.\ $t\in [0,1]$, and that the latter is constant for $\nu_x^t$-a.e.\ $\theta$,
we get
\begin{align*}
\int_{e_t^{-1}(x)}\left\|\dot{\theta}(t)\right\|\, d\nu_x^t(\theta) &\geq
\left\| \int_{e_t^{-1}(x)}\dot{\theta}(t)\, d\nu_x^t(\theta) \right\| \\
& \geq
\sup_{\pi\in \mathcal{F}} \int_{e_t^{-1}(x)} \frac{d\,}{dt}\pi(\theta(t))\, d\nu_x^t(\theta)  =
\int_{e_t^{-1}(x)}\left\|\dot{\theta}(t)\right\|\, d\nu_x^t(\theta),
\end{align*}
so that the above inequalities are in fact equalities and therefore we get~\eqref{eq_normint1wk1} with
$\dot{\theta}$ instead of $\dot{\theta}_w$.
By Bochner integrability of $\dot{\theta}$ and by strict convexity of the norm in $E$ one has from Remark~\ref{rem_bochn_conv1} that
$\dot{\theta}(t)$ is constant for $\nu_x^t$-a.e.\ $\theta$, which also shows~\eqref{eq_fl2repreta1c} for all $\pi\in Q^1(E)$
(the latter equality becomes in fact tautological here).
\end{proof}

The lemmata below have been employed in the proof of Lemma~\ref{lm_charODE_ban1}.

\begin{lemma}\label{lm_piauxcurrT1}
Under
condition~\eqref{eq_fl2pinuxt3c1}
for an arbitrary $t\in (0,1)$ the functional $T_t$ over $D^1(E)$ defined by
\begin{align*}
T_t(f\,d\pi) &:= \int_0^t\,ds \int_{E} f(x) (W\pi)(x,s)\, d (e_{s\#}\eta)(x),
\end{align*}
where $W$ is as in~\eqref{eq_fl2repreta1c},
is a normal current, represented by the measure $r_{t\#}\eta$
(where  $r_t\colon \theta\in C([0,1];E)\mapsto \theta\res [0,t]$)
without cancelation of mass.
\end{lemma}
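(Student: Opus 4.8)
The plan is to identify $T_t$ with the current $T_\eta^{(t)}:=\int_{C([0,1];E)}\ld\theta\res[0,t]\rd\,d\eta(\theta)=\int\ld\gamma\rd\,d(r_{t\#}\eta)(\gamma)$ canonically associated with the measure $r_{t\#}\eta$ on curves, and then to read off normality and the absence of cancellation of mass from the structure of $\eta$ together with hypothesis~\eqref{eq_fl2pinuxt3c1}. First I would record the relevant facts about $T_\eta^{(t)}$: since $\eta$ is concentrated on absolutely continuous curves and the restriction operator $r_t$ preserves absolute continuity, each $\ld\theta\res[0,t]\rd$ is a normal $1$-current with $\partial\ld\theta\res[0,t]\rd=\delta_{\theta(t)}-\delta_{\theta(0)}$ and $\MM(\ld\theta\res[0,t]\rd)=\ell(\theta\res[0,t])=\int_0^t|\dot\theta|(s)\,ds$; moreover, writing $\mu_s:=e_{s\#}\eta$ and disintegrating $\eta=\mu_s\otimes\nu_x^s$, one has $\int\ell(\theta\res[0,t])\,d\eta(\theta)=\int_0^t\!ds\int_E\big(\int_{e_s^{-1}(x)}|\dot\theta|(s)\,d\nu_x^s(\theta)\big)d\mu_s(x)$, which by~\eqref{eq_fl2pinuxt3c1} equals $\int_0^t\!ds\int_E\sup_{\pi\in\mathcal F}(W\pi)(x,s)\,d\mu_s(x)<+\infty$. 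Hence $T_\eta^{(t)}$ is a well-defined normal $1$-current with $\partial T_\eta^{(t)}=e_{t\#}\eta-e_{0\#}\eta$ and $\MM(T_\eta^{(t)})\le\int\ell(\theta\res[0,t])\,d\eta(\theta)$ (see e.g.~\cite[theorem~5.1]{PaoSte11-acycl} and the constructions preceding it).

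Next I would verify that $T_t=T_\eta^{(t)}$. Unfolding definitions, for $f\,d\pi\in D^1(E)$ one has $T_\eta^{(t)}(f\,d\pi)=\int_{C([0,1];E)}d\eta(\theta)\int_0^t f(\theta(s))\frac{d}{ds}\pi(\theta(s))\,ds$; since $|f(\theta(s))\frac{d}{ds}\pi(\theta(s))|\le\|f\|_\infty\Lip\,\pi\,|\dot\theta|(s)$ is $\eta\otimes\mathcal L^1$-integrable, Fubini's theorem applies, and for each fixed $s$ the disintegration $\eta=\mu_s\otimes\nu_x^s$, together with the fact that $\theta(s)=x$ for $\theta\in e_s^{-1}(x)$, gives $\int f(\theta(s))\frac{d}{ds}\pi(\theta(s))\,d\eta(\theta)=\int_E f(x)(W\pi)(x,s)\,d\mu_s(x)$ with $W$ as in~\eqref{eq_fl2repreta1c}; integrating back over $s\in(0,t)$ yields $T_\eta^{(t)}(f\,d\pi)=T_t(f\,d\pi)$. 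Thus $T_t\in\M_1(E)$ is normal with boundary $e_{t\#}\eta-e_{0\#}\eta$ (alternatively one verifies directly that $T_t$ is bilinear, obeys the Leibniz rule inherited from $\frac{d}{ds}(\pi_1\pi_2)(\theta(s))=\pi_1(\theta(s))\frac{d}{ds}\pi_2(\theta(s))+\pi_2(\theta(s))\frac{d}{ds}\pi_1(\theta(s))$, satisfies the locality/mass bound $|T_t(f\,d\pi)|\le\int_E|f|\,\lip\,\pi\,d\mathbf m$ with $\mathbf m$ as below — using again $\lip\,\pi(\theta(s))=\lip\,\pi(x)$ on $e_s^{-1}(x)$ — and has $T_t(\1\,d\pi)=\int(\pi(\theta(t))-\pi(\theta(0)))\,d\eta=\langle\pi,e_{t\#}\eta-e_{0\#}\eta\rangle$).

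It then remains to prove that $r_{t\#}\eta$ represents $T_t$ without cancellation of mass, i.e.\ that the estimate $\MM(T_t)\le\int\ell(\theta\res[0,t])\,d\eta(\theta)$ is in fact an equality; this is the heart of the matter. I would introduce the finite Borel measure $\mathbf m$ on $E$ given by $\mathbf m(B):=\int_0^t\!ds\int_B\big(\int_{e_s^{-1}(x)}|\dot\theta|(s)\,d\nu_x^s(\theta)\big)d\mu_s(x)$, so that $\mathbf m(E)=\int\ell(\theta\res[0,t])\,d\eta$, note the estimate $|(W\pi)(x,s)|\le\lip\,\pi(x)\int_{e_s^{-1}(x)}|\dot\theta|(s)\,d\nu_x^s(\theta)$ (whence $|\mu_{T_t\res d\pi}|\le\Lip\,\pi\cdot\mathbf m$), and hence set $\mathbf W\pi:=d\mu_{T_t\res d\pi}/d\mathbf m\in L^\infty(E,\mathbf m)$, a derivation with $T_t(f\,d\pi)=\int_E f\,\mathbf W\pi\,d\mathbf m$ and $|\mathbf W\pi|\le\lip\,\pi$ $\mathbf m$-a.e. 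By the correspondence between metric currents and Weaver derivations (\cite[Appendix~A]{PaoSte14-flow}; cf.\ Lemma~\ref{lm_massestV1}) one has $\mu_{T_t}=|\mathbf W|\,\mathbf m$, where $|\mathbf W|=\sup_{\pi\in\mathcal G}\mathbf W\pi$ for a suitable countable $\mathcal G\subset\Lip_1(E)$ that we may take to contain $\mathcal F$; therefore $\MM(T_t)=\int_E|\mathbf W|\,d\mathbf m$ and, since $|\mathbf W|\le1$ is immediate, everything reduces to showing $|\mathbf W|=1$ $\mathbf m$-a.e. I expect this to be the main obstacle: $\mathbf W\pi$ is obtained from $(W\pi)(x,\cdot)$ by averaging out the time variable against the disintegration of $\mathbf m$ over the projection $E\times(0,t)\to E$, so one must argue that the pointwise-in-$(x,s)$ maximization~\eqref{eq_fl2pinuxt3c1} — which says $\sup_{\pi\in\mathcal F}(W\pi)(x,s)=\int_{e_s^{-1}(x)}|\dot\theta|(s)\,d\nu_x^s(\theta)$ for $\mu_s\otimes ds$-a.e.\ $(x,s)$ — survives this averaging, producing a single observable that is (nearly) maximal on a fixed fibre; this is exactly the type of argument — equality case in the Jensen/triangle inequality for (weak${}^*$) vector integrals, combined with the $\nu_x^t$-a.e.\ constancy of $|\dot\theta|(t)$ over $e_t^{-1}(x)$ — that is carried out in the proofs of Theorem~\ref{th_fl2T'repr2} and Lemma~\ref{lm_charODE_ban1}, and I would invoke it here through a measurable selection over $\mathcal F$. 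Once $|\mathbf W|=1$ $\mathbf m$-a.e.\ is established, one gets $\MM(T_t)=\mathbf m(E)=\int_{C([0,1];E)}\ell(\theta\res[0,t])\,d\eta(\theta)$, i.e.\ $r_{t\#}\eta$ represents $T_t$ without cancellation of mass, which completes the argument.
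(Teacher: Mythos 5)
Your first half is fine and coincides with the paper's: unfolding $W$ through the disintegration $\eta=(e_{s\#}\eta)\otimes\nu_x^s$ and Fubini identifies $T_t$ with $\int_{C([0,1];E)}\ld\theta\res[0,t]\rd\,d\eta(\theta)$, whence $T_t\in\M_1(E)$ is normal with $\partial T_t=e_{t\#}\eta-e_{0\#}\eta$ (only note that $\MM(\ld\theta\res[0,t]\rd)=\ell(\theta\res[0,t])$ should be ``$\leq$'' for non\mbox{-}injective curves; you use only the inequality, so no harm). The genuine gap is the second half, which is the whole content of the lemma: you reduce ``no cancellation'' to $|\mathbf W|=1$ $\mathbf m$-a.e.\ and then only say you \emph{expect} the pointwise maximization~\eqref{eq_fl2pinuxt3c1} to survive the averaging in $s$, to be handled by a Jensen-equality/measurable-selection argument. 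That step is not just unproved in your write-up; from~\eqref{eq_fl2pinuxt3c1} alone it is false, i.e.\ the averaging you worry about really can destroy maximality. Take $E=\R$, $\eta=\delta_{\theta_0}$ with $\theta_0(s)=s\wedge(1-s)$, and $\mathcal F=\{x\mapsto qx\colon q\in\Q\cap[-1,1]\}$: then~\eqref{eq_fl2pinuxt3c1} holds (the supremum equals $|\dot\theta_0|(s)=1$ for a.e.\ $s$), $(W\pi)(\theta_0(s),s)=\frac{d}{ds}\pi(\theta_0(s))$, and for $t>1/2$ one computes $T_t=\ld[0,1-t]\rd$, so $\MM(T_t)=1-t<t=\int\ell(\theta\res[0,t])\,d(r_{t\#}\eta)$: the curve cancels itself after it turns around, even though at every fixed $(x,s)$ some $\pi\in\mathcal F$ is optimal. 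Hence no selection over $\mathcal F$ can give $|\mathbf W|=1$ $\mathbf m$-a.e.\ at this level of generality.

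What closes the argument is extra structure coming from the context in which the lemma is used (inside Lemma~\ref{lm_charODE_ban1}, which Theorem~\ref{th_fl2T'repr2} applies to the space-time curves $t\mapsto(\theta(t),t)$ with $\mathcal F$ a family of linear functionals): there the measures $e_{s\#}\eta$ are mutually singular in $s$ (each lives on a copy of $E\times\{s\}$), so a point of the support of $\mathbf m=\int_0^t e_{s\#}\eta\,ds$ determines its time and no averaging over $s$ occurs. Then $\mu_{T_t\res d\pi}=|W\pi|\,\mathbf m$ exactly, and since $\mathcal F$ realizes the mass of $T_t$ (Corollary~\ref{co_mcurr_densd1c}), Lemma~\ref{lm_massestV1}(iii) gives $\mu_{T_t}=\bigl(\sup_{\pi\in\mathcal F}W\pi\bigr)\,\mathbf m$, which together with~\eqref{eq_fl2pinuxt3c1} yields $\MM(T_t)=\int_0^t ds\int_E\int_{e_s^{-1}(x)}|\dot\sigma|(s)\,d\nu_x^s(\sigma)\,d(e_{s\#}\eta)(x)=\int\ell(\theta\res[0,t])\,d(r_{t\#}\eta)$, i.e.\ no cancellation. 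The paper compresses exactly this into its first, unexplained mass identity, so you have correctly located the delicate point; but to have a proof you must use such a hypothesis guaranteeing that the fibres of the projection are not averaged over time (no retracing, equivalently mutual singularity of the $e_{s\#}\eta$, or an a priori choice of a near-optimal observable independent of $s$ on each fibre), rather than hope the averaging is harmless.
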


\begin{proof}
One has
\begin{align*}
T_t(f\,d\pi) &:= \int_0^t\,ds \int_{E} f(x) (W\pi)(x,s)\, d (e_{s\#}\eta)(x) \\
& = \int_0^t\,ds \int_{E} f(x) \, d (e_{s\#}\eta)(x)
\int_{e_s^{-1}(x)}
\frac{d\,}{ds}\pi(\sigma(s)) \, d\nu_x^s(\sigma)\\
&=  \int_0^t\,ds \int_{C([0,1]; E)}\, d\eta(\theta) f(\theta(s)) \frac{d\,}{ds}\pi(\theta(s)) =
\int_{C([0,1]; E)}\ld\theta\rd(f\,d\pi)\, d\eta(\theta),
\end{align*}
so that $T_t\in \M_1(E)$. Since $\eta$ is finite, the $T_t$ is normal.
Further,
\begin{align*}
    \MM(T_t) & = \int_0^t\,ds \int_{E} \, d (e_{s\#}\eta)(x)\sup_{\pi\in \mathcal{F}}
\int_{e_s^{-1}(x)}
\frac{d\,}{ds}\pi(\sigma(s)) \, d\nu_x^s(\sigma)\\
& = \int_0^t\,ds \int_{E}  \, d (e_{s\#}\eta)(x)
\int_{e_s^{-1}(x)} |\dot{\sigma}|(s) \, d\nu_x^s(\sigma)\\
& = \int_0^t\,ds \int_{C([0,1];E)}  |\dot{\theta}|(s) \, d \eta(\theta)= \int_{C([0,1];E)}  \ell(\theta) \, d (r_{t\#}\eta)(\theta),
\end{align*}
concluding the proof.
\end{proof}

\begin{lemma}\label{lm_piwedge1}
Under conditions of Lemma~\ref{lm_charODE_ban1}(B),
if~\eqref{eq_fl2repreta2c} holds for $\pi\in \{\pi_1,\ldots, \pi_m\} \subseteq \Lip(E)$,
then it holds also for $\pi:=\bigvee_{i=1}^m \pi_i$.
\end{lemma}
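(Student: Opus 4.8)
The plan is to use that the ``active index set'' of the maximum at time $t$ depends only on the position $\theta(t)$, not on the whole curve, so that it is constant along each fibre $e_t^{-1}(x)$, and then feed this into the disintegration $\eta=(e_{t\#}\eta)\otimes\nu_x^t$. Recall that in both \eqref{eq_fl2repreta2c} and \eqref{eq_fl2repreta1c} the operator $W$ acts by the disintegration formula $(W\pi)(x,t)=\int_{e_t^{-1}(x)}\frac{d}{dt}\pi(\sigma(t))\,d\nu_x^t(\sigma)$ (see also \eqref{eq:Wpi-disint1}). The starting point is the elementary pointwise fact: if $f_1,\dots,f_m\colon[0,1]\to\R$ are differentiable at an interior point $t$ and $f:=\bigvee_i f_i$ is differentiable at $t$, then $f'(t)=f_i'(t)$ for every $i$ with $f_i(t)=f(t)$, since $f-f_i\ge 0$ vanishes at $t$ and so $t$ is a minimum of the differentiable function $f-f_i$. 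Applying this with $f_i=\pi_i\circ\theta$, and noting that $\theta$ is absolutely continuous while each $\pi_i$ is Lipschitz, so $\pi\circ\theta=\bigvee_i(\pi_i\circ\theta)$ is absolutely continuous (hence differentiable a.e.), we obtain that for $\eta$-a.e.\ $\theta$ and a.e.\ $t$,
\[
\frac{d}{dt}\pi(\theta(t))=\frac{d}{dt}\pi_i(\theta(t))\quad\text{for every }i\in A(\theta,t):=\{\,i:\pi_i(\theta(t))=\pi(\theta(t))\,\}.
\]

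\emph{Fibrewise identity for $W$.} Collect into a single set $N\subset C([0,1];E)\times[0,1]$ all pairs $(\theta,t)$ for which some $\pi_i\circ\theta$ or $\pi\circ\theta$ fails to be differentiable at $t$, or for which \eqref{eq_fl2repreta2c} (equivalently \eqref{eq_fl2repreta1c}) fails for some $\pi_i$; by the hypothesis on the $\pi_i$ and the previous paragraph, $(\eta\otimes\mathcal L^1)(N)=0$. Writing $\mu_t:=e_{t\#}\eta$ and using $\eta=\mu_t\otimes\nu_x^t$ for each $t$, it follows that for a.e.\ $t$ and $\mu_t$-a.e.\ $x$ one has $\nu_x^t(N_t)=0$, where $N_t:=\{\theta:(\theta,t)\in N\}$. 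Fix such $(t,x)$. Since $\sigma(t)=x$ for all $\sigma\in e_t^{-1}(x)$, the active set $A(\sigma,t)$ equals the common value $A_x(t):=\{i:\pi_i(x)=\pi(x)\}$; hence for $\nu_x^t$-a.e.\ $\sigma$ and every $i\in A_x(t)$,
\[
\frac{d}{dt}\pi(\sigma(t))=\frac{d}{dt}\pi_i(\sigma(t))=(W\pi_i)(\sigma(t),t)=(W\pi_i)(x,t),
\]
by the pointwise fact and \eqref{eq_fl2repreta2c} for $\pi_i$. In particular $(W\pi_i)(x,t)$ does not depend on $i\in A_x(t)$; calling this common value $c_x(t)$ and integrating $\frac{d}{dt}\pi(\sigma(t))=c_x(t)$ against $\nu_x^t$ gives $(W\pi)(x,t)=c_x(t)=(W\pi_i)(x,t)$ for every $i\in A_x(t)$.

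\emph{Conclusion.} The identity just proved holds for a.e.\ $t$ and $\mu_t$-a.e.\ $x$; pulling it back along $x\leftrightarrow\theta(t)$ — i.e.\ the set of $(\theta,t)$ with $(\theta(t),t)$ in the exceptional set is $\eta\otimes\mathcal L^1$-null, again by the disintegration of $\eta$ — and intersecting with the complement of $N$, we get that for $\eta$-a.e.\ $\theta$ and a.e.\ $t$, picking any $i\in A(\theta,t)=A_{\theta(t)}(t)$,
\[
\frac{d}{dt}\pi(\theta(t))=(W\pi_i)(\theta(t),t)=(W\pi)(\theta(t),t),
\]
the first equality from the pointwise fact and the second from the fibrewise identity. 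This is exactly \eqref{eq_fl2repreta2c} for $\pi=\bigvee_{i=1}^m\pi_i$. (Note that only the hypothesis \eqref{eq_fl2repreta2c} for the $\pi_i$ is used, not the full conclusion of Lemma~\ref{lm_charODE_ban1}(B).)

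The only delicate point is the measure-theoretic bookkeeping — the passage back and forth between ``$\eta$-a.e.\ curve, a.e.\ time'' statements and ``a.e.\ $t$, $\mu_t$-a.e.\ $x$, $\nu_x^t$-a.e.\ $\sigma$'' statements — which is routine once one observes that each hypothesis \eqref{eq_fl2repreta2c} for $\pi_i$ amounts to the complement of a single $\eta\otimes\mathcal L^1$-null subset of $C([0,1];E)\times[0,1]$, and that the map $(\theta,t)\mapsto A(\theta,t)$ factors through the evaluation $(\theta,t)\mapsto(\theta(t),t)$, so that $A$ is constant on the fibres $e_t^{-1}(x)$.
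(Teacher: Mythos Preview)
Your proof is correct and takes a genuinely different route from the paper's. The paper invokes Lemma~\ref{lm_piauxcurrT1} to know that the functional $T_t(f\,d\pi)=\int_0^t\int_E f\,(W\pi)\,d(e_{s\#}\eta)\,ds$ is a normal metric current, then applies the chain rule / locality for currents to deduce the identity $W(\pi_1\vee\pi_2)=\1_{\{\pi_1\ge\pi_2\}}W\pi_1+\1_{\{\pi_1<\pi_2\}}W\pi_2$ holding $(e_{t\#}\eta)\otimes dt$-a.e., and finally transfers this to the curve level via the map $\beta_0(\theta,s)=\theta(s)$ and the fact that $\eta$ represents $T_1$ without cancellation of mass. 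You bypass all of this by the direct observation that the active index set $A(\theta,t)$ depends only on $\theta(t)$, hence is constant on each fibre $e_t^{-1}(x)$; combined with the elementary one-variable differentiation fact for a pointwise maximum and the disintegration definition of $W$, this yields $(W\pi)(x,t)=(W\pi_i)(x,t)$ for each active $i$ with no appeal to currents at all. Your argument is strictly more elementary and, as you note, uses only the hypothesis~\eqref{eq_fl2repreta2c} for the $\pi_i$ together with the disintegration structure --- in particular it does not need~\eqref{eq_fl2pinuxt3c1}, which the paper's route implicitly relies on through Lemma~\ref{lm_piauxcurrT1}. The paper's approach does yield, as a byproduct, the pointwise-in-$(x,t)$ decomposition of $W(\pi_1\vee\pi_2)$ via the current chain rule, which could be of independent use; yours gives the same identity along the way but derived fibrewise rather than from current locality.
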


\begin{proof}
It suffices to give the proof in case $m=2$ (the generale case following by induction). Let us assume that, for $i \in \cur{1,2}$ one has
\begin{equation}\label{eq_Vtpi12a0}
\frac{d\,}{dt} \pi_i(\theta(t))= (W\pi_i)(\theta(t),t), \quad \text{for
$\eta$-a.e.\ $\theta$ and
a.e.\ $t\in [0,1]$.}
\end{equation}
 To prove that~\eqref{eq_fl2repreta2c} holds
for $\pi:=\pi_1\vee \pi_2$, 
we use the chain rule for currents $T_t$ defined by Lemma~\ref{lm_piauxcurrT1}
\begin{align*}
    T_t(f\,d\pi) & = T_t(f\1_{\pi_1\geq\pi_2}\,d\pi_1) + T_t(f\1_{\pi_1<\pi_2}\,d\pi_2)\\
                &=  \int_0^t\, ds \int_{E} f(x) \left(\1_{\pi_1\geq\pi_2} (x)(W\pi_1)(x,s)
                + \1_{\pi_1 <\pi_2} (x)(W\pi_2)(x,s)\right)\,d(e_{s\#}\eta)(x),
\end{align*}
for every bounded Borel function $f$ over $E$ so that $e_{t\#}\eta\otimes dt$-a.e.\ in $E\times [0,1]$
one has
\begin{equation}\label{eq_Vtpi12a}
W\pi=\1_{\pi_1\geq\pi_2} (W\pi_1)
                + \1_{\pi_1 <\pi_2} (W\pi_2). 
\end{equation}
Since clearly $\mu_{T_1}\ll \int_0^1\mu_s\, ds$, then~\eqref{eq_Vtpi12a} holds
also $\mu_{T_1}\otimes dt$-a.e.
Hence,  for $\eta\otimes dt$-a.e.\ $(\theta,t)$ one has
\begin{equation}\label{eq_Vtpi12b}
(W\pi)(\theta(t),t)=\1_{\pi_1\geq\pi_2}(\theta(t)) (W\pi_1)(\theta(t),t)
                + \1_{\pi_1 <\pi_2}(\theta(t)) (W\pi_2)(\theta(t),t). 
\end{equation}
In fact, denoting by $\beta_0\colon C([0,1];E\times [0,1])\to E$ the map
$\beta_0(\theta,s):=\theta(s)$, we get
$\mu_{T_1}=\beta_{0\#} ((\eta\otimes dt))$
 by~\cite[example~4.5]{PaoSte14-flow}, because $\eta$ represents $T_1$ without cancellation of mass by Lemma~\ref{lm_piauxcurrT1},
 and hence
$\mu_{T_1}\otimes dt=(\beta_{0},\mbox{id})_{\#} (\eta\otimes dt)$.
Thus the set of pairs $(\theta,t)$ such that~\eqref{eq_Vtpi12b} fails is a subset of
$(\beta_{0},\mbox{id})^{-1}(e)$, where
$e\subset E\times [0,1]$ is the set of pairs $(x,t)$ such that~\eqref{eq_Vtpi12a} fails, the latter having
$(\mu_{T_1}\otimes dt)(e)=0$, hence
showing that~\eqref{eq_Vtpi12b} holds  for
$\eta\otimes\, dt$-a.e.\ $(\theta,t)\in C([0,1];E)\times [0,1]$.

On the other hand, for every absolutely continuous curve $\theta$ (in particular for
$\eta$-a.e.\ $\theta\in C([0,1];E)$),  one has
\begin{equation*}
\frac{d\,}{dt} \pi(\theta(t)) =\1_{\pi_1\geq\pi_2}(\theta(t)) \frac{d\,}{dt} \pi_1(\theta(t))
                + \1_{\pi_1 <\pi_2}(\theta(t)) \frac{d\,}{dt} \pi_2(\theta(t)) \quad \text{for a.e.\ $t\in [0,1]$.}
\end{equation*}
and to conclude the proof it suffices to compare the latter equality with~\eqref{eq_Vtpi12b} using~\eqref{eq_Vtpi12a0}.
\end{proof}

\begin{lemma}\label{lm_pilim1}
Under conditions of Lemma~\ref{lm_charODE_ban1}(B),
if~\eqref{eq_fl2repreta2c} holds for $\pi\in \{\pi_k\}_{k\ge 1} \subseteq \Lip(E)$,
and $\pi_k(x)\to \pi_\infty(x)$ for all $x\in E$ as $k\to +\infty$, with
$\Lip\,\pi_k$ uniformly bounded, then
then it holds also for $\pi_\infty$.
\end{lemma}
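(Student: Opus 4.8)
The plan is to recast both the hypothesis and the conclusion as the assertion that a certain function lies in a \emph{weakly closed} subspace of $L^1$, and then pass to the limit by weak $L^1$-compactness. Write $\gamma(\theta,t):=(\theta(t),t)$ and, for $\pi\in\Lip(E)$, put $G_\pi(\theta,t):=\frac{d}{dt}\pi(\theta(t))$; this is a Borel function on $C([0,1];E)\times[0,1]$, defined for $\eta\otimes\mathcal{L}^1$-a.e.\ $(\theta,t)$, with $|G_\pi(\theta,t)|\le(\Lip\,\pi)\,|\dot\theta|(t)$. By Lemma~\ref{lm_piauxcurrT1} (with $T_1$ the normal current there) one has $\int_{C([0,1];E)\times[0,1]}|\dot\theta|(t)\,d(\eta\otimes\mathcal{L}^1)=\MM(T_1)<\infty$, so each such $G_\pi$ lies in $L^1(\eta\otimes\mathcal{L}^1)$. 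Using the disintegration $\eta=(e_{t\#}\eta)\otimes\nu_x^t$ one checks $\gamma_{\#}(\eta\otimes\mathcal{L}^1)=(e_{t\#}\eta)\otimes\mathcal{L}^1=:\rho$ and that the conditional expectation with respect to $\gamma$ is
\[
\E[\,G_\pi \mid \gamma\,](\theta,t)=\int_{e_t^{-1}(\theta(t))}\frac{d}{dt}\pi(\sigma(t))\,d\nu_{\theta(t)}^t(\sigma)=(W\pi)(\theta(t),t),
\]
with $W$ as in~\eqref{eq_fl2repreta1c}. Let $P_\gamma\colon G\mapsto\E[\,G\mid\gamma\,]$ be the associated (linear, contractive, idempotent) operator and $V:=P_\gamma\big(L^1(\eta\otimes\mathcal{L}^1)\big)=\ker(I-P_\gamma)$ its range. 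Then the validity of~\eqref{eq_fl2repreta2c} for a Lipschitz $\pi$ is \emph{exactly} the statement $G_\pi\in V$; in particular $G_{\pi_k}\in V$ for every $k$ (union over $k$ the corresponding $\eta\otimes\mathcal{L}^1$-null sets), and the goal is reduced to showing $G_{\pi_\infty}\in V$.

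The next step is to prove $G_{\pi_k}\weakto G_{\pi_\infty}$ weakly in $L^1(\eta\otimes\mathcal{L}^1)$. Set $L:=\sup_k\Lip\,\pi_k<\infty$, so $\pi_\infty$ is $L$-Lipschitz; for every absolutely continuous curve $\theta$ the functions $s\mapsto\pi_k(\theta(s))$ are absolutely continuous, converge pointwise to $s\mapsto\pi_\infty(\theta(s))$, and have derivatives dominated by $L|\dot\theta|\in L^1(0,1)$, so by Dunford--Pettis and the fundamental theorem of calculus (testing the weak limit against $\1_{[0,s]}$) one gets $G_{\pi_k}(\theta,\cdot)\weakto G_{\pi_\infty}(\theta,\cdot)$ weakly in $L^1(0,1)$. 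Hence, for bounded Borel $a$ on $C([0,1];E)$ and $b\in L^\infty(0,1)$, dominated convergence in $\theta$ (dominant $\|a\|_\infty\|b\|_\infty L\,\ell(\theta)$, which is $\eta$-integrable since $\int\ell(\theta)\,d\eta=\MM(T_1)<\infty$) and Fubini give $\int a(\theta)b(t)G_{\pi_k}\,d(\eta\otimes\mathcal{L}^1)\to\int a(\theta)b(t)G_{\pi_\infty}\,d(\eta\otimes\mathcal{L}^1)$. Since $\{G_{\pi_k}\}_k$ is uniformly integrable (dominated by the fixed function $L|\dot\theta|(t)\in L^1(\eta\otimes\mathcal{L}^1)$), it is relatively weakly compact in $L^1(\eta\otimes\mathcal{L}^1)$; every weak limit point agrees with $G_{\pi_\infty}$ against all such product functions, and these separate elements of $L^1(\eta\otimes\mathcal{L}^1)$ (it suffices to take $b=\1_{[0,s]}$, $s$ rational, and $a$ bounded Borel), so the whole sequence converges weakly to $G_{\pi_\infty}$.

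To conclude, I would use that $V$ is a closed subspace of $L^1(\eta\otimes\mathcal{L}^1)$ — being $\ker(I-P_\gamma)$ for the bounded operator $P_\gamma$ — hence weakly closed by Mazur's theorem; as $G_{\pi_k}\in V$ and $G_{\pi_k}\weakto G_{\pi_\infty}$, we obtain $G_{\pi_\infty}\in V$, i.e.\ $G_{\pi_\infty}=\E[\,G_{\pi_\infty}\mid\gamma\,]$ in $L^1(\eta\otimes\mathcal{L}^1)$, which by the displayed identity is precisely $\frac{d}{dt}\pi_\infty(\theta(t))=(W\pi_\infty)(\theta(t),t)$ for $\eta\otimes\mathcal{L}^1$-a.e.\ $(\theta,t)$ — that is,~\eqref{eq_fl2repreta2c} for $\pi_\infty$. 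I expect the delicate point to be exactly this limiting argument rather than any computation: one cannot hope for \emph{strong} $L^1$-convergence of the derivatives $G_{\pi_k}$ (already $\pi_k(\cdot)=k^{-1}\sin(k\,\cdot)$ on $\mathbb{R}$ has $\pi_k\to0$ uniformly with bounded Lipschitz constants while $\pi_k'$ does not converge in $L^1$), so the proof genuinely rests on combining weak $L^1$-compactness with the weak closedness of the subspace of functions that depend on $(\theta,t)$ only through $(\theta(t),t)$.
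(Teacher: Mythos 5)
Your argument is correct, and it reaches the conclusion by a genuinely different mechanism than the paper, even though both are limit passages in an integrated form of~\eqref{eq_fl2repreta2c}. The paper rewrites the equation as $\int_0^t (W\pi)(\theta(s),s)\,ds=\pi(\theta(t))-\pi(\theta(0))$, integrates over an arbitrary Borel set $\Delta\subset C([0,1];E)$, rewrites the right-hand side by disintegration, and passes to the limit using two ingredients: dominated convergence on the left, and the fact that $T_t$ from Lemma~\ref{lm_piauxcurrT1} is a metric current, whose continuity axiom gives $W\pi_k\rightharpoonup W\pi_\infty$ in the weak$^*$ sense of $L^\infty(E\times[0,1],\mu_t\otimes dt)$; arbitrariness of $\Delta$ and $t$ then concludes. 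You instead encode~\eqref{eq_fl2repreta2c} as membership of $G_\pi(\theta,t)=\frac{d}{dt}\pi(\theta(t))$ in the range $V$ of the conditional expectation onto functions of $(\theta(t),t)$ --- which, by the standing disintegration, acts on $G_\pi$ exactly as $(W\pi)(\theta(t),t)$ --- prove weak $L^1(\eta\otimes\mathcal{L}^1)$ convergence $G_{\pi_k}\rightharpoonup G_{\pi_\infty}$ (Dunford--Pettis with the dominating function $L|\dot\theta|$, identification of the limit curvewise through primitives and against products $a(\theta)\1_{[0,s]}(t)$), and finish by Mazur, $V$ being a norm-closed subspace. What your route buys is that no convergence statement about $W\pi_k$ and no appeal to the continuity axiom of the current $T_t$ are needed: only the finiteness $\int \ell(\theta)\,d\eta<\infty$ (which, like the paper, you extract from Lemma~\ref{lm_piauxcurrT1}) and the disintegration enter, and the key structural fact --- that the class of $L^1$ functions depending on $(\theta,t)$ only through $(\theta(t),t)$ is weakly closed --- is isolated cleanly; the implicit measurability of $(x,t)\mapsto(W\pi)(x,t)$ is used at the same level of informality as in the paper. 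The paper's version is shorter and stays inside the current formalism already in place, its test family $\1_\Delta(\theta)\1_{[0,t]}(s)$ being a sufficient subfamily of yours; and your closing remark is apt: strong $L^1$ convergence of the derivatives is indeed unavailable, so some weak-limit device is unavoidable in either approach.
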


\begin{proof}
For a generic $\pi\in \Lip(E)$, the real function $t \mapsto\pi(\theta(t))$ is absolutely continuous, thus we may equivalently rewrite~\eqref{eq_fl2repreta2c} in the integral form
\begin{equation}\label{eq_fl2repreta1int1}
\int_0^t (W \pi)(\theta(s),s)\,ds = \pi(\theta(t))- \pi(\theta(0)), \quad \text{for every $t \in [0,1]$.}
\end{equation}
Letting $\Delta$ stand for an arbitrary Borel subset of $C([0,1];E)$, and integrating the above
equality in $\eta\res\Delta$, we get
\begin{align*}
    \int_\Delta (\pi(\theta(t))- \pi(\theta(0)))\, d\eta(\theta) & =
    \int_0^t \, ds\int_\Delta  (W\pi)(\theta(s),s)d\eta(\theta) \\
    & = \int_0^t \, ds\int_{E} (W\pi)(x,s) \nu_x^s(\Delta)\,d\mu_s(x),
\end{align*}
the latter equality being obtained by disintegration.
Recalling that $T_t$
defined in Lemma~\ref{lm_piauxcurrT1} is a current,
we get that $W\pi_k\rightharpoonup W\pi_\infty$ in the weak$^*$ sense of $L^\infty(E\times [0,1],\mu_t\otimes dt)$,
hence, since
$0\leq \nu_x^s(\Delta)\leq 1$,
both sides of the above relationship are continuous with respect
to pointwise convergence of Lipschitz functions with uniformly bounded Lipschitz constants. Since $\Delta$ and $t \in [0,1]$ are arbitrary, then also~\eqref{eq_fl2repreta1int1} holds in the limit for $\pi_\infty$.
\end{proof}

\subsection{Estimate of the modulus of velocity}\label{sec_modVelest1}

In this section we provide the following useful alternative characterization of the ``modulus''  $|\tilde V_t|$ (defined in
Theorem~\ref{th_fl2T'repr2})
of the velocity field $\tilde V_t$.

\begin{proposition}\label{prop_modVelest1a}
Under the conditions of Theorem~\ref{th_fl2T'repr2}, letting
$|\tilde V|$ stand for the minimum function $\beta \in L^1(E\times[0,1],\mu_t\otimes dt)$ satisfying
the estimate
\begin{equation}\label{eq:defin-tilde-V-norm}
 |(\tilde V \pi)(x,t)|
\leq   \beta(x,t) \lip_{\tilde E}\pi(x,t)\quad \text{for all $\pi\in \Lip(E)$},
\end{equation}
we have that
$|\tilde V_t|(x)= |\tilde V|(x,t)$ for $\mu_t\otimes dt$-a.e.\ $(x,t)\in E\times [0,1]$.
\end{proposition}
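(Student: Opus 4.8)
The plan is to establish the two inequalities $|\tilde V|(x,t)\le|\tilde V_t|(x)$ and $|\tilde V_t|(x)\le|\tilde V|(x,t)$ for $\mu_t\otimes dt$-a.e.\ $(x,t)\in E\times[0,1]$, where throughout $|\tilde V_t|(x):=(|W|(x,t)^2-1)^{1/2}$ as in Theorem~\ref{th_fl2T'repr2}. (First one checks that $|\tilde V|$ is well defined: the set of $\beta\in L^1(E\times[0,1],\mu_t\otimes dt)$ satisfying~\eqref{eq:defin-tilde-V-norm} for every $\pi\in\Lip(E)$ is nonempty --- this is the content of the first inequality --- and is stable under pointwise minima of countably many elements and under monotone limits, so it has a least element $|\tilde V|$, itself admissible.) The engine for both inequalities is the measure $\eta$ given by Theorem~\ref{th_fl2T'repr2}(ii): it is concentrated on absolutely continuous curves, satisfies $(e_t)_\#\eta=\mu_t$ for all $t$, and for every $\pi\in\Lip(E)$ one has $\frac{d}{dt}\pi(\theta(t))=(\tilde V_t\pi)(\theta(t))$ together with $|\dot\theta|(t)=|\tilde V_t|(\theta(t))$ for $\eta\otimes dt$-a.e.\ $(\theta,t)$. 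Since the map $(\theta,t)\mapsto(\theta(t),t)$ pushes $\eta\otimes dt$ forward exactly to $\mu_t\otimes dt$ (Fubini, using $(e_t)_\#\eta=\mu_t$), Borel a.e.\ relations between functions on $E\times[0,1]$ may be transferred freely to and from $C([0,1];E)\times[0,1]$.

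For $|\tilde V|\le|\tilde V_t|$ it suffices to show that $(x,t)\mapsto|\tilde V_t|(x)$ is admissible. I fix $\pi\in\Lip(E)$ and recall the standard estimate $\bigl|\frac{d}{dt}\pi(\theta(t))\bigr|\le(\lip\,\pi)(\theta(t))\,|\dot\theta|(t)$ for a.e.\ $t$ along any absolutely continuous curve, see~\cite{AmbrTilli00}. Combining it with the two identities of Theorem~\ref{th_fl2T'repr2}(ii) gives $|(\tilde V_t\pi)(\theta(t))|\le(\lip\,\pi)(\theta(t))\,|\tilde V_t|(\theta(t))$ for $\eta\otimes dt$-a.e.\ $(\theta,t)$; transferring this back to $E\times[0,1]$, and using that $\lip_{\tilde E}\pi=\lip\,\pi$ for a space-only $\pi$ (truncating the metric does not change local Lipschitz constants), I obtain~\eqref{eq:defin-tilde-V-norm} with $\beta(x,t)=|\tilde V_t|(x)$. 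Since $|\tilde V_t|\le|W|\le C(1+\overline V)\in L^1(E\times[0,1],\mu_t\otimes dt)$ by~\eqref{eq:V-bar-integrability}, this $\beta$ is admissible, so $|\tilde V|\le|\tilde V_t|$. (The same inequality also follows directly from the current $T'$: testing it against $c\pi+\tau$ with $c\in\Q$ and using the mass estimate gives $|c\,(W\pi)(x,t)+1|\le|W|(x,t)\sqrt{c^2(\lip\,\pi(x))^2+1}$ a.e., and --- since $W\tau=1$ and $|W|\ge1$ a.e.\ --- optimizing this quadratic inequality over $c\in\R$ forces $|(W\pi)(x,t)|\le(|W|(x,t)^2-1)^{1/2}\lip\,\pi(x)$, which is the claim because $W\pi=\tilde V\pi$ for $\pi$ independent of $t$.)

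For the reverse inequality $|\tilde V_t|\le|\tilde V|$ I must use that $\tilde V$ is, conversely, large enough to recover the full metric speed. I fix a countable $\{\pi_k\}\subset\Lip_1(E)$ for which $|\dot\theta|(t)=\sup_k\bigl|\frac{d}{dt}\pi_k(\theta(t))\bigr|$ for a.e.\ $t$ along every absolutely continuous curve with values in a fixed separable subset of $E$ on which $\eta$ is concentrated (e.g.\ the truncated distance functions $x\mapsto d(x,y_k)\wedge n$ and their negatives, with $\{y_k\}$ dense); this is the usual characterization of the metric derivative, see~\cite{AmbrTilli00}. Using~\eqref{eq_fl2repreta1} for each $\pi_k$, the admissibility of $|\tilde V|$ transferred along curves, and $\lip\,\pi_k\le\Lip\,\pi_k\le1$, I obtain for $\eta\otimes dt$-a.e.\ $(\theta,t)$ that $|\tilde V_t|(\theta(t))=|\dot\theta|(t)=\sup_k|(\tilde V_t\pi_k)(\theta(t))|\le\sup_k(\lip\,\pi_k)(\theta(t))\,|\tilde V|(\theta(t),t)\le|\tilde V|(\theta(t),t)$, and transferring back yields $|\tilde V_t|(x)\le|\tilde V|(x,t)$ for $\mu_t\otimes dt$-a.e.\ $(x,t)$. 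Together with the previous step this gives $|\tilde V|(x,t)=|\tilde V_t|(x)=(|W|(x,t)^2-1)^{1/2}$ a.e., as claimed.

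I expect the substantive point to be this last inequality $|\tilde V_t|\le|\tilde V|$: it is exactly where one needs to know that $T'$ \emph{realizes} the whole modulus $|W|$, and the argument above gets this essentially for free only because Theorem~\ref{th_fl2T'repr2} has already packaged it into the identities $|\dot\theta|=|\tilde V_t|$ and~\eqref{eq_fl2repreta1}. If one wanted a self-contained proof of $|W|^2\le1+|\tilde V|^2$ one would instead write $|W|(x,t)=\sup_g(Wg)(x,t)$ over a suitable countable family and force the $g$'s to have the mixed form $a\,\pi(x)+\sqrt{1-a^2}\,t$ with $\pi$ from a countable subfamily of $\Lip_1(E)$; showing that such a family still detects all the mass of $T'$ is the delicate part, and it is precisely here that the choice of the distance $d_2$ on $E\times[0,1]$ matters, as the cycles of Remark~\ref{rem:product-cycles} warn. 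Everything else is bookkeeping with $\mu_t\otimes dt$- versus $\eta\otimes dt$-null sets.
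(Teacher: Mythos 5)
Your argument is correct, and its two halves split interestingly against the paper's own proof. The half showing that $(x,t)\mapsto|\tilde V_t|(x)$ is itself an admissible $\beta$ is essentially the paper's: the paper bounds $(W\pi)(x,t)=\int_{e_t^{-1}(x)}\frac{d}{dt}\pi(\theta(t))\,d\nu^t_x(\theta)$ by $\int_{e_t^{-1}(x)}|\dot\theta|(t)\,d\nu^t_x(\theta)$ and then uses~\eqref{eq_fl2repreta1mod1}; you reach the same conclusion curvewise from~\eqref{eq_fl2repreta1} and the chain-rule estimate $|\frac{d}{dt}\pi(\theta(t))|\le(\lip\pi)(\theta(t))\,|\dot\theta|(t)$, and your version even records the local ($\lip$) form of~\eqref{eq:defin-tilde-V-norm} directly, where the paper's displayed chain only states $|\tilde V\pi|\le|\tilde V_t|$ for $\pi\in\Lip_1(E)$. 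The other half --- every admissible $\beta$ dominates $|\tilde V_t|=\sqrt{|W|^2-1}$ --- is where you genuinely deviate. The paper stays at the level of the current $T'$: it upgrades~\eqref{eq:defin-tilde-V-norm} to $\Lip(E)\otimes\Lip([0,1])$ via Lemma~\ref{lem:extension-tensor-like-space-time}, combines the spatial bound with $\partial_t$ by Cauchy--Schwarz relative to $d_2$ to get $|W\pi|\le\sqrt{1+\beta^2}\,\lip\pi$, and then invokes a countable mass-realizing family of tensor-type $1$-Lipschitz functions (through the $\ell^\infty$-embedding, Lemma~\ref{lm_mcurr_linseq1a} and Lemma~\ref{lm_massestV1}(iii)) to conclude $|W|\le\sqrt{1+\beta^2}$; no superposition measure enters this half. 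You instead exploit the measure $\eta$ of Theorem~\ref{th_fl2T'repr2}(ii) together with the classical countable characterization $|\dot\theta|(t)=\sup_{k,n}|\frac{d}{dt}\,(d(\theta(t),y_k)\wedge n)|$ of the metric derivative, trading the tensorization and mass-realization machinery for the full strength of the superposition principle (plus concentration of the curves on a separable subset, which is legitimate since they lie in the separable support of $\mu_{T'}$, the same fact the paper uses in the proof of Theorem~\ref{thm:radon-nikodym}); the exceptional sets depending on $\pi_k$ are harmless since the family is countable. Both routes are valid under the stated hypotheses; the paper's is more intrinsic and isolates the current-theoretic mechanism (the $d_2$ geometry plus mass realization), which is precisely what your closing parenthetical optimization over $c\pi+\tau$, $c\in\Q$, reproduces --- that variant is sound once the localized mass bound $|W(c\pi+\tau)|\le|W|\,\lip(c\pi+\tau)$ a.e.\ is justified, e.g.\ from Lemma~\ref{lm_massestV1} and Remark~\ref{rem_equivalent_Text_mass1}.
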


\begin{proof}
Let $\beta \in L^1(E\times[0,1],\mu_t\otimes dt)$ be such that~\eqref{eq:defin-tilde-V-norm} holds. Then by Lemma~\ref{lem:extension-tensor-like-space-time} the same estimate holds also for all $\pi \in \Lip(E)\otimes\Lip([0,1])$
with $\lip\,\pi$ on the right hand side substituted by $\lip_E\,\pi$.
Thus for such $\pi$ one has
\begin{align*}
    |(W\pi)(x,t)| & = |(V_t\pi(\cdot, t))(x) +\partial_t\pi (x,t)|\leq \beta(x,t) \lip_E\,\pi (x,t) + |\partial_t\pi(x,t)|\\
    & \leq \left(1+\beta(x,t)^2\right)^{1/2}
\left(\lip_E\pi(x,t)^2 +
\left|\partial_t \pi(x,t)\right|^2\right)^{1/2}
\end{align*}
for $\mu_t\otimes dt$-a.e.\ $(x,t)$.
But it is a standard calculation to prove that
\[
\left(\lip_E^*\pi(x,t)^2 +
\left|\partial_t \pi(x,t)\right|^2\right)^{1/2} \leq \Lip\,(\pi\res B_\rho((x,t))
\]
for every  $(x,t)\in \tilde E\times [0,1]$, $\rho>0$,
where $\lip_E^*\,\pi(x,t) := \limsup_{(\delta, s) \to (0^+, t) } \Lip( \pi \res ( B_\delta (x) \times \{ s\})  )$,
and hence
\[
\left(\lip_E\pi(x,t)^2 +
\left|\partial_t \pi(x,t)\right|^2\right)^{1/2}\leq
\left(\lip_E^*\pi(x,t)^2 +
\left|\partial_t \pi(x,t)\right|^2\right)^{1/2}\leq \lip\,\pi (x,t),
\]
which therefore gives
\[
  |(W\pi)(x,t)| \leq \left(1+\beta(x,t)^2\right)^{1/2}
\lip\,\pi (x,t)
\]
for $\mu_t\otimes dt$-a.e.\ $(x,t)$.
Let $j\colon E\to \ell^\infty$ stand for an isometric embedding, and let $\tilde{\mathcal{F}}$ stand for a dense family in the unit
ball of $\ell^1\times \R$, and
\[
\mathcal{F}:=\{\pi\in \Lip_1(E\times [0,1]\colon \pi(x,t)=\tilde\pi(j(x),t)\mbox{ for all $(x,t)$ and for some $\pi\in \tilde{\mathcal{F}}$}\}.
\]
Since $\tilde{\mathcal{F}}$ by Lemma~\ref{lm_mcurr_linseq1a}
realizes the mass of every current in $\M_1(\ell^\infty\times [0,1])$, then so does
$\mathcal{F}$ for every current in $\M_1(E\times [0,1])$. By Lemma~\ref{lm_massestV1}(iii) we have thus
$|W|=\sup_{\pi\in \mathcal{F}}|W\pi|$. But since $\mathcal{F}\subset \Lip(E)\otimes\Lip([0,1])$,
we have for each $\pi\in \mathcal{F}$ the estimate
\[
|W\pi|\leq \sqrt{1+\beta^2}\, \lip\,\pi \leq  \sqrt{1+\beta^2}\, \Lip\,\pi\leq  \sqrt{1+\beta^2},
\]
and hence $\beta\geq \sqrt{|W|^2-1}=: |V_t|$
for $\mu_t\otimes dt$-a.e.\ $(x,t)$.
On the other hand, for every $\pi\in \Lip_1(E)$ and for $\mu_t\otimes dt$-a.e.\ $(x,t)$ we have
\begin{align*}
|(\tilde V \pi )(x,t)|  =
| (W\pi) (x,t)|
& = \left|\int_{e_t^{-1}(x)}
\frac{d\,}{dt}\tilde\pi(\theta(t))\, d\nu_x^t(\theta)\right|\\
& \leq \int_{e_t^{-1}(x)} |\dot{\theta}|(t)
 \, d\nu_x^t(\theta) \quad \text{because $\Lip\,\pi\leq 1$}\\
& = \int_{e_t^{-1}(x)} |V_t|(\theta(t))
 \, d\nu_x^t(\theta)\quad \text{by~\eqref{eq_fl2repreta1mod1}}\\
 & = |V_t|(x) \int_{e_t^{-1}(x)}
 \, d\nu_x^t(\theta) = |V_t|(x),
\end{align*}
concluding the proof.
\end{proof}

\section{Structure of absolutely curves of measures}\label{sec_Lisini1}

In this section 
we let $W_p$, $p \in [1,\infty]$, stand for the
the $p$-Kantorovich-Wasserstein distance in $\mathcal{P}_p(E)$, the latter standing for the set of Borel probability measures $\mu$ on $E$, having finite $p$-moment $\int_E d(x,x_0)^p d\mu <\infty$ for some $x_0 \in E$. Given any curve $\{\mu_t\}_{t\in [0,1]} \subseteq \mathcal{P}_p(E)$ that is also absolutely continuous with respect to $W_p$, we denote by $|\dot{\mu}_t|_p$
its metric derivative with respect to $W_p$ at $t\in [0,1]$.

\begin{theorem}\label{th_Lisini1}
Let $1<p \leq \infty$ and $\{\mu_t\}_{t\in [0,1]} \subseteq  \mathcal{P}_p(E)$ be an absolutely continuous curve  with respect to $W_p$, such that $|\dot{\mu}|_p \in L^p(0,1)$. Then there is a family of linear operators $\{V_t\}_{t\in [0,1]}$ on $\Lip(E)$  satisfying~\eqref{eq_VtLeib1} and~\eqref{eq_VtLip1}
with $\A:= \Lip_b(E)$, the families
$\{\mu_t\}_{t\in [0,1]}$, $\{V_t\}_{t\in [0,1]}$ solving
the continuity equation~\eqref{eq_fl2cont1}.

Moreover, there is a measure $\eta$ over $C([0,1]; E)$ concentrated over absolutely continuous curves for which $(e_t)_\# \eta = \mu_t$ for every $t \in [0,1]$ and the ODE in observables~\eqref{eq_fl2repreta1mod1} hold. Every such measure $\eta$ satisfies
\begin{equation}\label{eq_lis_metrvel1}
|\dot{\mu}_t|_p^p = \int_{C([0,1]; E)} |\dot \theta|^p(t) d\eta(\theta) \quad \text{for a.e.\ $t\in [0,1]$.}
\end{equation}
\end{theorem}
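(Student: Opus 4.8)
The plan is to follow the scheme of \cite[\S~8.2]{Ambrosio-Gigli-Savare-05}: first build, for a.e.\ $t$, a ``velocity'' current $T_t\in\M_1(E)$ out of optimal transport plans between nearby $\mu$'s, then read off from $T_t$ a derivation $V_t$ on $\A:=\Lip_b(E)$ which satisfies~\eqref{eq_VtLeib1} and~\eqref{eq_VtLip1} with a density $\overline V(\cdot,t):=\frac{d\mu_{T_t}}{d\mu_t}$ obeying the \emph{sharp} bound $\|\overline V(\cdot,t)\|_{L^p(\mu_t)}\le|\dot{\mu}_t|_p$ (whence~\eqref{eq:V-bar-integrability}, since $|\dot{\mu}|_p\in L^p(0,1)$), and for which the continuity equation~\eqref{eq_fl2cont1} holds. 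Since $\Lip_b(E)$ satisfies~($\A_1$) by Remark~\ref{rem:a1-a2_0} and $t\mapsto\mu_t$ is narrowly continuous (being $W_p$-absolutely continuous), Theorems~\ref{thm:sp} and~\ref{th_fl2T'repr2}(i) then apply and produce a measure $\eta$ on $C([0,1];E)$, concentrated on absolutely continuous curves, with $(e_t)_\#\eta=\mu_t$ for all $t$ and with~\eqref{eq_fl2repreta1mod1} holding. It remains to prove~\eqref{eq_lis_metrvel1}.

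\emph{Construction of $V_t$.} For $s<t$ pick an optimal plan $\gamma_{s,t}\in\Gamma_o(\mu_s,\mu_t)$ for $W_p$; then $\int_E\pi\,d\mu_t-\int_E\pi\,d\mu_s=\int_{E\times E}(\pi(y)-\pi(x))\,d\gamma_{s,t}$, so $t\mapsto\int_E\pi\,d\mu_t$ is Lipschitz, hence a.e.\ differentiable. Because the plans $\gamma_{t,t+h}$ concentrate on the diagonal as $h\to0^+$ — as $\int d(x,y)^p\,d\gamma_{t,t+h}=W_p^p(\mu_t,\mu_{t+h})=O(h^p)$ and $p>1$ — the measures $\mu_h^t$ on $E$, the first marginals of $\frac1h\,d(x,y)\,\gamma_{t,t+h}$, are tight with $\mu_h^t(E)=\frac1h W_1(\mu_t,\mu_{t+h})\le\frac1h W_p(\mu_t,\mu_{t+h})$; fixing $t$ where $\frac1h W_p(\mu_t,\mu_{t+h})\to|\dot{\mu}_t|_p$, along a subsequence $\mu_h^t\rightharpoonup\nu_t$ narrowly, and one checks $\nu_t\ll\mu_t$ with $\big\|\tfrac{d\nu_t}{d\mu_t}\big\|_{L^p(\mu_t)}\le|\dot{\mu}_t|_p$ (Hölder between $\frac1h d(x,y)\gamma_{t,t+h}$ and test functions, using $W_p(\mu_t,\mu_{t+h})=\big(\int d(x,y)^p\,d\gamma_{t,t+h}\big)^{1/p}$). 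Along the same subsequence, after a further diagonal extraction over a countable $\Lip$-dense family, define $T_t(f\,d\pi):=\lim_h\frac1h\int_{E\times E}f(x)(\pi(y)-\pi(x))\,d\gamma_{t,t+h}$ for $f,\pi\in\Lip_b(E)$; using the upper semicontinuity of $\lip\,\pi$ and splitting the integral near and away from the diagonal one obtains $|T_t(f\,d\pi)|\le\int_E|f|\,\lip\,\pi\,d\nu_t$, which together with $T_t(\1\,d\pi)=\frac{d}{dt}\int_E\pi\,d\mu_t$, the Leibniz identity $T_t(f\,d\pi)+T_t(\pi\,df)=T_t(\1\,d(f\pi))$ and the vanishing of $T_t(f\,d\pi)$ when $\pi$ is locally constant on $\supp f$ exhibits $T_t$ as a metric current in $\M_1(E)$ with $\mu_{T_t}\le\nu_t$. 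As $T_t(f\,d\pi)=0$ when $f=0$ $\mu_t$-a.e., we have $\mu_{T_t}\ll\mu_t$; by \cite[Proposition~A.7]{PaoSte14-flow} there is a derivation $V_t$ on $\Lip_b(E)$ with $T_t(f\,d\pi)=\int_E f\,V_t\pi\,d\mu_t$, and Remark~\ref{rem_equivalent_Text_mass1} gives $|V_t\pi|\le\overline V(\cdot,t)\,\lip\,\pi$ with $\overline V(\cdot,t)=\frac{d\mu_{T_t}}{d\mu_t}\le\frac{d\nu_t}{d\mu_t}$, so $\|\overline V(\cdot,t)\|_{L^p(\mu_t)}\le|\dot{\mu}_t|_p$. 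Finally $\frac{d}{dt}\int_E\pi\,d\mu_t=T_t(\1\,d\pi)=\int_E V_t\pi\,d\mu_t$ is exactly~\eqref{eq_fl2cont1}. (For $p=\infty$ the construction is simpler, since $d(x,y)\le W_\infty(\mu_t,\mu_{t+h})$ $\gamma_{t,t+h}$-a.e.)

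\emph{Proof of~\eqref{eq_lis_metrvel1}.} Let $\eta$ be \emph{any} measure with $(e_t)_\#\eta=\mu_t$ and~\eqref{eq_fl2repreta1mod1}, and set $g(t):=\int_{C([0,1];E)}|\dot\theta|^p(t)\,d\eta(\theta)$. By~\eqref{eq_fl2repreta1mod1} and $(e_t)_\#\eta=\mu_t$, $g(t)=\int_E|\tilde V_t|^p(x)\,d\mu_t(x)$, which does not depend on the particular $\eta$; and by Proposition~\ref{prop_modVelest1a}, since $\overline V(\cdot,t)$ is admissible in~\eqref{eq:defin-tilde-V-norm}, one has $|\tilde V_t|\le\overline V(\cdot,t)$ $\mu_t$-a.e., so $g(t)\le\int_E\overline V(x,t)^p\,d\mu_t(x)\le|\dot{\mu}_t|_p^p$ for a.e.\ $t$; in particular $g\in L^1(0,1)$. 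Conversely, for $s<t$ the plan $(e_s,e_t)_\#\eta$ is a $W_p$-competitor, so, by Jensen's inequality in the time variable and Fubini,
\[
W_p^p(\mu_s,\mu_t)\le\int_{C([0,1];E)}d(\theta(s),\theta(t))^p\,d\eta\le\int_{C([0,1];E)}\Big(\int_s^t|\dot\theta|(r)\,dr\Big)^p d\eta\le|t-s|^{p-1}\int_s^t g(r)\,dr,
\]
whence $W_p^p(\mu_s,\mu_t)/|t-s|^p\le\frac1{|t-s|}\int_s^t g$; letting $t\to s$ at a common Lebesgue point of $g$ and metric-differentiability point of $t\mapsto\mu_t$ gives $|\dot{\mu}_s|_p^p\le g(s)$ for a.e.\ $s$. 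Combining the two inequalities, $g(t)=|\dot{\mu}_t|_p^p$ for a.e.\ $t$, i.e.~\eqref{eq_lis_metrvel1}. (For $p=\infty$ one uses $d(\theta(s),\theta(t))\le\int_s^t|\dot\theta|(r)\,dr$ and the evident essential-supremum estimates.)

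The hard part is the construction of $V_t$ in the second paragraph: on a generic metric space one has no analogue of the Euclidean weak-$L^p$ compactness of the difference quotients $(y-x)/h$, so both the existence of the defining limit for $T_t$ (handled through a countable dense family and a diagonal extraction) and the verification that the resulting bilinear functional obeys the locality and continuity axioms of a metric current require care — the finite-$h$ ``transport'' approximants are not themselves currents in the absence of a geodesic structure, so one cannot simply pass to a weak$^*$ limit of normal currents. The $L^p$-estimate on $\overline V(\cdot,t)$ via $\nu_t$, the application of Theorems~\ref{thm:sp} and~\ref{th_fl2T'repr2}, and the transport-plan comparison of the last paragraph are all routine.
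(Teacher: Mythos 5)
Your overall architecture (build a velocity field from optimal plans, feed it into Theorems~\ref{thm:sp} and~\ref{th_fl2T'repr2}, then close \eqref{eq_lis_metrvel1} by the transport-plan comparison) is the same ``dual'', AGS-style strategy the paper uses, and your final paragraph is essentially the paper's argument. But the step you yourself flag as ``the hard part'' --- the pointwise-in-$t$ construction of $T_t$ and $V_t$ --- is a genuine gap, for two concrete reasons. First, the family of difference-quotient functionals $\pi\mapsto\frac1h\int f(x)(\pi(y)-\pi(x))\,d\gamma_{t,t+h}$ is equicontinuous only with respect to the Lipschitz seminorm of $\pi$ (the $1/h$ factor kills any sup-norm or pointwise control near the diagonal: for $\sigma=\pi-\pi_n$ with small $\|\sigma\|_\infty$ but $\Lip\,\sigma\approx 2$, the near-diagonal contribution is of order $\Lip\,\sigma\cdot\frac1h\int d(x,y)\,d\gamma$, which does not vanish). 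Since $\Lip_b(E)$ is not separable for the Lipschitz norm, a countable ``Lip-dense'' family plus a diagonal extraction defines the limit only on that countable family; it does not give a well-defined bilinear functional on all of $\Lip_b(E)\times\Lip_b(E)$, nor independence of the approximating sequence, and you would in effect need an extension result in the spirit of Lemma~\ref{lm_extnormalcurr1} for a countable subalgebra, whose density hypotheses you have not verified. Second, and more damaging for the application of the general machinery, your subsequence $h\to0$ is extracted separately for each $t$: nothing guarantees that $(x,t)\mapsto(V_t\pi)(x)$ and $\overline V(x,t)$ are $\mu_t\otimes dt$-measurable, whereas the framework of Section~\ref{sec:general-sp} requires $V$ to be an operator into $L^1(E\times[0,1],\mu_t\otimes dt)$ with a Borel $\overline V$ satisfying \eqref{eq:V-bar-integrability}; without joint measurability, Proposition~\ref{prop_fl2T'curr1} and hence Theorems~\ref{thm:sp}, \ref{th_fl2T'repr2} cannot be invoked. (There is also a minor compactness issue: narrow convergence of the measures $\mu^t_h$ on a general, not necessarily Polish, metric space needs justification; extracting the densities weakly in $L^{p'}(\mu_t)$ would be safer.)

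The paper circumvents exactly these difficulties by not working pointwise in time at all: after embedding $E$ into $\ell^\infty$ it takes $\A$ to be smooth cylindrical functions (Lemma~\ref{lm_mcurr_densc0lip} gives ($\A_1$)), defines a single space-time functional $L$ on $\A\otimes PC^1([0,1])$, proves the H\"older bound \eqref{eq_Ldpi1} using the plans $\gamma_{t-h,t}$ only under a time integral, and then obtains a jointly Borel $\ell^\infty$-valued field $V(x,t)$ in one stroke by $L^{p'}(\ell^1)$--$L^p(\ell^\infty)$ duality; the sharp pointwise bound $\|\overline V(\cdot,t)\|_{L^p(\mu_t)}\le|\dot\mu_t|_p$ that you use for the upper bound in \eqref{eq_lis_metrvel1} is then replaced by the time-integrated norm estimates \eqref{eq_lis_normLup1} and \eqref{eq_reprLtild2}, which, combined with your (correct) a.e.\ lower bound, still force equality a.e. So your concluding paragraph can be salvaged with only the integrated bound, but the construction of $V_t$ as you propose it would need to be replaced (or substantially completed) along these lines.
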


We remark that the original result by S.~Lisini for generic metric spaces gives only the existence of a measure
$\eta$ over $C([0,1]; E)$ concentrated over absolutely continuous curves for which $(e_t)_\# \eta = \mu_t$
for every $t \in [0,1]$, and~\eqref{eq_lis_metrvel1} holds. The above Theorem~\ref{th_Lisini1} therefore
gives a more precise information on the structure of curves in $\mathcal{P}_p$ absolutely continuous with respect to the Kantorovich-Wasserstein distance.

\begin{proof}
Since the statement is invariant with respect to
isometric
embeddings of the space $E$, we may assume $E$ be a separable closed linear subspace of $\ell^\infty$,  endowed with the inherited norm
$\|\cdot\|_\infty$. We let  $\A := \mathrm{Cyl}(E)\cap C^1(E)\cap \Lip_b(E)$ and notice that for  $\pi \in \A\otimes PC^1([0,1])$ the differential $d_x \pi$ is well defined and takes values in $\ell^1$ (actually only a finite number of components are non-zero).

For $\pi\in \A\otimes PC^1(\R)$ we define
\begin{equation}\label{eq_lisdefL1}
L(\pi):= - \int_0^1 \,dt \int_E \partial_t\pi (x,t)\,d\mu_t(x) +
\int_E \pi(x,1)\,d\mu_1(x) - \int_E \pi(x,0)\,d\mu_0(x).
\end{equation}
Thus, by Lebesgue dominated convergence theorem one has
\begin{equation}\label{eq_Lpi1}
\begin{aligned}
L(\pi) & = - \lim_{h\to 0}\int_0^{1} \,dt \int_E \frac{\pi(x, t+h)-\pi(x,t)}{h} \,d\mu_t(x) \\
&\qquad\qquad\qquad +
\int_E \pi(x,1)\,d\mu_1(x) - \int_E \pi(x,0)\,d\mu_0(x)\\
&= - \lim_{h\to 0}\int_h^1 \,dt \int_E \frac{\pi(x,t)}{h} \,d(\mu_{t-h}-\mu_t)(x) \\
&\qquad\qquad\qquad
- \lim_{h\to 0} \frac{1}{h}\int_1^{1+h} \,dt \int_E \pi(x,t) \,d\mu_{t-h}(x)+ \int_E \pi(x,1)\,d\mu_1(x)\\
&\qquad\qquad\qquad
+ \lim_{h\to 0}\frac{1}{h}\int_0^{h} \,dt \int_E \pi(x,t) \,d\mu_{t}(x) - \int_E \pi(x,0)\,d\mu_0(x)
\\
&=  - \lim_{h\to 0}\int_h^1 \,dt \int_E \frac{\pi(x,t)}{h} \,d(\mu_{t-h}-\mu_t)(x),
\end{aligned}
\end{equation}
where the boundary terms cancel out because the function $t\mapsto \int_E \pi(x,t) \,d\mu_{t}(x)$ is continuous.

Denoting by $\gamma_{t-h,t}$ an optimal transport plan between $\mu_{t-h}$ and $\mu_t$, we get thus
\begin{align*}
L(\pi) & =  - \lim_{h\to 0}\int_h^1 \,dt \int_{E\times E} \frac{\pi(x,t)-\pi(y,t)}{h} \,d\gamma_{t-h,t}(x,y)\\
& =  \lim_{h\to 0}\int_h^1 \,dt \frac{1}{h}\int_{E\times E} \,d\gamma_{t-h,t}(x,y) \int_0^1 \langle d_x \pi(x+s(y-x),t), y-x\rangle\,ds\\
& = \lim_{h\to 0}\int_h^1 \,dt \frac{1}{h}\int_{E\times E} H(x,y)\|x-y\|_\infty\,d\gamma_{t-h,t}(x,y),
\end{align*}
where
\[
H(x,y) :=\left\{
\begin{array}{cc}
  \displaystyle\int_0^1 \left\langle d_x \pi(x+s(y-x),t), \frac{y-x}{\|y-x\|_\infty}\right\rangle\,ds, & x\neq y, \\
  \| d_x \pi(x,t)\|_1, & x=y,
\end{array}
\right.
\]
and $\langle \cdot, \cdot \rangle$ stands for the duality between
$\ell^1$ and $\ell^\infty$ and $\|\cdot\|_1$ stands for the norm in $\ell^1$.
Therefore, by H\"{o}lder inequality
\begin{align*}
L(\pi) & \leq
\lim_{h\to 0}\int_h^1  \frac{1}{h}W_p(\mu_{t-h},\mu_{t})
\left(\int_{E\times E} H^{p'}(x,y)\,d\gamma_{t-h,t}(x,y)\right)^{1/p'}dt.
\end{align*}
Clearly since for $\pi\in \A\otimes PC^1([0,1])$ the differential map $d_x\pi(\cdot)$ is continuous and bounded, then $H$
is upper semicontinuous and bounded. Since
$\gamma_{t-h,t}\rightharpoonup (\mathrm{Id}\otimes\mathrm{Id})_{\#} \mu_t$ in the narrow sense of measures
as $h\to 0$, we have
\[
\limsup_{h\to 0} \int_{E\times E} H^{p'}(x,y)\,d\gamma_{t-h,t}(x,y)\leq \int_{E} \| d_x\pi(x,t)\|_1^{p'}\,d\mu_t(x).
\]
Further,
\[
\limsup_{h\to 0} \frac{W_p(\mu_{t-h},\mu_{t})}{h}= |\dot{\mu}_t|_p \quad \text{for a.e.\ $t\in [0,1]$,}
\]
and we conclude that
\begin{equation}\label{eq_Ldpi1}
\begin{split}
L(\pi) & \leq
\int_0^1 |\dot{\mu}_t|_p
 \left(\int_{E} \| d_x\pi(x,t)\|_1^{p'}\,d\mu_t(x)\right)^{1/p'} \,dt \\ &  \le  \left(\int_0^1  |\dot{\mu}_t|_p^p \, dt \right)^{1/p} 
\|d_x \pi\|_{L^{p'}(E\times [0,1], \mu_t\otimes dt; \ell^1)}.
\end{split}
\end{equation}

In view of the above inequality~\eqref{eq_Ldpi1} we have that
$\tilde L(d_x\pi) :=L(\pi)$
correctly defines a continuous functional over $\{d_x\pi\colon \pi\in \A\otimes PC^1([0,1])\}$
which can be extended by continuity
to a linear functional on the closure of this set in $L^{p'}(E\times [0,1], \mu_t\otimes dt; \ell^1)$,
with its norm $\| \tilde L\|$ estimated as
\begin{equation}\label{eq_lis_normLup1}
\| \tilde L\|^p \leq \int_0^1  |\dot{\mu}_t|_p^p\, dt.
\end{equation}
By standard duality theory for Lebesgue spaces there is an
$\ell^\infty$-valued map $V$ representing $\tilde L$, and in particular one has
\begin{eqnarray}
\tilde L(d_x\pi) &=& \int_{E\times [0,1]} \langle d_x\pi, V \rangle\, d\mu_t dt \quad \text{for every $\pi \in \A\otimes PC^1([0,1])$,} \label{eq_reprLtild1}\\
\| \tilde L\|^p  &\geq & \int_{E\times [0,1]} \|V\|_\infty^{p} \, d\mu_t dt . \label{eq_reprLtild2}
\end{eqnarray}
In fact,~\eqref{eq_reprLtild1} is true because when $\pi \in \A\otimes PC^1([0,1])$, then
\[
\tilde L(d_x\pi)  = \int_{E\times [0,1]} \langle d_x\pi, V^n\rangle \,d\mu_t  dt
\]
for some $\ell^\infty$-valued map $V^n$ which may be assumed to have only first $n$ nonzero components,
and clearly for $n\leq m$ one has that the first $n$ components of $V^n$ and $V^m$ coincide. This also gives
\[
\| \tilde L\|^p  \geq \sup_n\int_{E\times [0,1]} \|V^n\|_\infty^{p} \, d\mu_t  dt
\]
showing~\eqref{eq_reprLtild2}.
We define then, for $\pi \in \A\otimes PC^1([0,1])$, with the customary abuse of the notation using the same letters for operators and functions,
\begin{equation}\label{eq_defLVt}
(V\pi)(x,t) := \langle d_x\pi(t,x), V_t(x)\rangle, \quad \text{ where $V_t(x):= V(x,t)$.}
\end{equation}

The map $V$ trivially satisfies the Leibniz rule~\eqref{eq_VtLeib1} and~\eqref{eq_VtLip1} with
$\overline{V}  := \| V\|_\infty$, 
since for $\pi \in \A\otimes PC^1([0,1])$  one has $(\lip_{E} \pi)(x,t)=\|d_x \pi (x,t)\|_1$.
Finally, we have
\[
L(\pi)=\tilde L(d_x\pi) = \int_0^1 \,dt \int_E (V_t\pi(t,\cdot))(x) \,d\mu_t(x),
\]
and recalling~\eqref{eq_lisdefL1}, we arrive at the equality
\begin{equation}\label{eq_lis_conteq1}
\int_0^1 \,dt \int_E \left(\partial_t \pi (x,t) + (V_t\pi(t,\cdot))(x) \right)\,d\mu_t(x) =
\int_E \pi(x,1)\,d\mu_1(x) - \int_E \pi(x,0)\,d\mu_0(x)
\end{equation}
for every $\pi\in \A\otimes PC^1([0,1])$,
and in particular, the continuity equation~\eqref{eq_fl2cont1} holds.
Since by Lemma~\ref{lm_mcurr_densc0lip} the algebra $\A$ satisfies~($\A_1$), then by
Proposition~\ref{prop_fl2T'curr1}
the functional $T'$  defined by~\eqref{eq_fl2defT'} is uniquely extendable to a normal  current
over $E\times [0,1]$, and hence Theorem~\ref{th_fl2T'repr2}(ii)
provides the existence of a measure $\eta$ over $C([0,1]; E)$ concentrated
over absolutely continuous curves for which $(e_t)_\# \eta = \mu_t$ for every $t \in [0,1]$
such that~\eqref{eq_fl2repreta1mod1} and~\eqref{eq_fl2repreta1} hold.
We further abuse the notation and write $V_t$ for the vector field extended to the whole $\Lip(E)$ (instead of $\tilde V_t$).

Let now  $\eta$ be an arbitrary such measure.
Since it induces a transport plan $(e_t\otimes e_{t+h})_{\#}\eta$ with marginals $\mu_t$ and $\mu_{t+h}$, by definition of Wasserstein distance we get
\begin{align*}
    W_p(\mu_t,\mu_{t+h})^p &\leq \int_{C([0,1];E)} d(\theta(t),\theta(t+h))^p\, d\eta(\theta)
    \leq \int_{C([0,1];E)} \, d\eta(\theta) \left(\int_t^{t+h}
    |\dot{\theta}|(s)\,ds\right)^p\\
    & \leq  h^p \int_{C([0,1];E)} \, d\eta(\theta) \left(\frac{1}{h}\int_t^{t+h}
    |\dot{\theta}|^p(s)\,ds\right)^p\mbox{ by Jensen inequality}.
\end{align*}
so that  for a.e.\ $t\in [0,1]$  one has
\begin{equation*}
\begin{aligned}
    |\dot{\mu}_t|_p^p
\leq \int_{C([0,1];E)}  |\dot{\theta}|(t)^p\, d\eta(\theta).
\end{aligned}
\end{equation*}
Let now $\mathcal{F}\subset \ell^1$ be a countable family of cylindrical elements of $\ell^1$ dense in the unit ball of $\ell^1$.
Since $\mathcal{F}\subset \A$, we get
\begin{align*}
|\dot{\theta}|(t) =\|\dot{\theta}_w(t)\|_\infty =\sup_{\pi\in \mathcal{F}} \langle \pi,\dot{\theta}_w(t) \rangle =
\sup_{\pi\in \mathcal{F}} \frac{d\,}{dt} \langle \pi, \theta(t)\rangle & = \sup_{\pi\in \mathcal{F}} (V_t\pi)(\theta(t))\quad \text{by~\eqref{eq_fl2repreta1}}\\
&\leq \overline V(\theta(t),t),
\end{align*}
 and therefore
\begin{equation}\label{eq_mudot_est1}
\begin{aligned}
    |\dot{\mu}_t|_p^p
  \leq  \int_{C([0,1];E)} |\overline V(\theta(t),t)|^p\, d\eta(\theta)
  = \int_E |\overline V|^p(x,t)\,d\mu_t(x)
\end{aligned}
\end{equation}
for a.e.\ $t\in [0,1]$.
On the other hand, with the help of~\eqref{eq_reprLtild2} we get
\begin{equation}\label{eq_mudot_est2}
\begin{aligned}
    \int_0^1\,dt \int_E \overline{V} (x,t)^p\,d\mu_t(x) =     \int_0^1\,dt \int_E \|V(x,t)\|_\infty^p\,d\mu_t(x) \leq
    \|\tilde L\|
     \le  \int_0^1 |\dot{\mu}_t|_p^p\,dt.
\end{aligned}
\end{equation}
Therefore, the above inequalities~\eqref{eq_mudot_est1} and~\eqref{eq_mudot_est2} are in fact equalities, which shows~\eqref{eq_lis_metrvel1}.
\end{proof}

\section{Flows of measures induced by a current}

In~\cite{PaoSte14-flow} the following definition has been introduced.
A finite positive Borel measure $\mu$ over a metric space $E$ is called \emph{transportable} by a
current $T\in \M_1(E)$, if there is
\begin{itemize}
\item[(a)] a
($\sigma$-finite) Borel measure $\bar\eta$ over $C(\R^+;E)$ concentrated over
absolutely continuous nonconstant curves representing $T$ without cancelation of mass
in the sense that
\begin{equation}\label{eq_flow1cancmass0}
\begin{aligned}
T(\omega)&=\int_{C(\R^+;E)} \ld\theta\rd (\omega)\,d\bar\eta(\theta), \quad\text{for all $\omega\in D^1(E)$},\\
\MM(T)&=\int_{C(\R^+;E)} \ell(\theta)\,d\bar\eta(\theta),
\end{aligned}
\end{equation}
where the curves (unlike the previous sections) are assumed to be parameterized over $\R^+$ rather than $[0,1]$,
and
\item[(b)]
a finite Borel measure $\sigma$ over $C(\R^+;E)\times\R^+$, such that
\begin{equation}\label{eq_flow1adm1}
\begin{aligned}
\beta_{0\#}\sigma &=\mu,\quad
q_{\#}\sigma \ll \eta,
\quad\text{where}\\
\beta_t &\colon (\theta,s)\in C(\R^+;E)\times\R^+\mapsto \theta(t+s)\in E,\\
q&\colon (\theta,s)\in C(\R^+;E)\times\R^+\mapsto \theta\in C(\R^+;E).
\end{aligned}
\end{equation}
\end{itemize}
The respective flow of the measure $\mu$ is then defined by $\mu_t:=\beta_{t\#}\mu$.
The idea behind this definition is as follows: the flow is seen as a movement of an ensemble of particles, each particle moving along some absolutely continuous curve, which is an ``integral curve'' to the vector field represented by the current $T$. The movement of this ensemble is assumed to be given by a measure $\sigma$ over $C(\R^+;E)\times\R^+$, so that, intuitively, $\sigma(\theta,s)$ stands for the number of particles which move along the curve $\theta\in C(\R^+;E)$ at time $s\in \R^+$. Suppose that we set our timer at $t:=0$ and observe the distribution of the particles. Each particle at this instant is moving along some curve
$\theta\in C(\R^+;E)$ for already some time $s\in \R^+$ and hence
is observed at the position $\beta_0(\theta,s):=\theta(s)\in E$, so that we observe the distribution of particles given by the measure $\mu:=\beta_{0\#}\sigma$.
After time $t\in \R^+$ is elapsed (i.e.\ when our timer shows $t$), we observe the measure $\mu_t:=\beta_{t\#}\sigma$, where
$\beta_t(\theta,s):=\theta(s+t)\in E$. The family $\{\mu_t\}_{t\in \R^+}$ may be viewed thus as a flow of the measure $\mu=\mu_0$.
The ``integral curves'' are exactly the curves charged by the measure $\eta$ representing $T$ as in~\eqref{eq_flow1cancmass0}
(the drawback of this notion is its sensibility to the equivalent choice of the distance in $E$; in fact, even for $E:=\R^n$
the different choices of the norm produce different representations of the same current and hence different flows).
It has been proven in~\cite{PaoSte14-flow} that under some natural conditions on the data, $\mu_t$  satisfies the continuity equation with the family of vector fields
related to the current $T$, and, as we prove in the following
statement, every measure $\eta$ representing this flow which is found by the superposition principle
(Theorem~\ref{thm:sp}) in fact represents without cancelation of mass the normal current which differs from the original one
just by a measure but not by a direction of the vector field.

\begin{corollary}\label{co_fl2Sgam2}
Let $T\in \M_1(E)$, where $E$ is
a Banach space with Radon-Nikodym property and with strictly convex norm $\|\cdot\|$,
$T= V\wedge\mu_T$ for a Borel map $V\colon E\to E$, $\|V(x)\|=1$ for $\mu_T$-a.e.\ $x\in E$,
in the sense that
\[
T(f\,d\pi)= \int_E \langle f(x) \langle V(x), d\pi(x)\rangle\, d\mu_T(x)\quad \text{for $f\,d\pi\in D^1(E), \pi\in Q^1(E)$},
\]
and
$\mu$ be a finite Borel measure
over $E$ transportable by $T$ through a finite Borel measure $\sigma$ over $C(\R^+;E)\times\R^+$ such that
\[
\int_{C(\R^+;E)} \ell(\theta)\,d(q_{\#}\sigma)(\theta) <+\infty,
\]
and $q_{\#}\sigma$ concentrated over curves  $\theta\in C(\R^+;E)$ arclength parameterized over
$[0, \ell(\theta))$ and constant
over $[\ell(\theta),+\infty)$,
the respective flow of $\mu$ being given by the family of measures $\{\mu_t\}$ with $\mu_0=\mu$.
Then the following assertions hold.
\begin{itemize}
\item[(i)] One has
\begin{equation}\label{eq_pde_transportable1}
  \partial_t\mu_t+ \mathrm{div}\, V_t \mu_t =0,\quad\text{ where $V_t(x)=V(x) (1-\varphi_t(x))$},
\end{equation}
in the weak sense,
with $\varphi\colon E\times [0,1]\to [0,1]$,   $\varphi_t(x):=\varphi(x,t)$ a Borel function.
\item[(ii)]
There is a $\gamma\in L^1(E,\mu_T)$
such that, if $V_t$ is as in~\eqref{eq_pde_transportable1},
then
every finite Borel measure $\eta$ over $C(\R^+;E)$ provided by Theorem~\ref{thm:radon-nikodym}
represents the normal current $T\res \gamma$.
\end{itemize}
\end{corollary}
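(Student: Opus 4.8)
\textbf{Assertion (i)} is essentially~\cite[Theorem~6.1]{PaoSte14-flow}; the only thing I would add is the explicit shape of $V_t$, which is what (ii) needs. Fix a representation $\bar\eta$ of $T$ without cancellation of mass by curves arclength--parametrized over $[0,\ell(\theta))$ and constant on $[\ell(\theta),+\infty)$; since $q_{\#}\sigma\ll\bar\eta$ and $E$ has the Radon--Nikodym property with a strictly convex norm, \cite[Proposition~5.1]{PaoSte14-flow} gives $\dot\theta(r)=V(\theta(r))$ for a.e.\ $r<\ell(\theta)$ and $q_{\#}\sigma$-a.e.\ $\theta$. Using $\mu_t=\beta_{t\#}\sigma$, Fubini and integration by parts in $t$, one computes that for $\pi\in Q^1(E)$ the distributional derivative of $t\mapsto\int_E\pi\,d\mu_t$ on $(0,1)$ is
\[
t\longmapsto\int_{C(\R^+;E)\times\R^+}\langle V(\theta(t+s)),d\pi(\theta(t+s))\rangle\,\1_{\{t+s<\ell(\theta)\}}\,d\sigma(\theta,s).
\]
Disintegrating $\sigma=\mu_t\otimes\rho_x^t$ along $\beta_t$ and setting $\varphi_t(x):=\rho_x^t(\{(\theta,s):t+s\ge\ell(\theta)\})\in[0,1]$ (a jointly Borel function) turns the above into $\int_E\langle V(x)(1-\varphi_t(x)),d\pi(x)\rangle\,d\mu_t(x)$, which is~\eqref{eq_pde_transportable1} with $V_t=V(1-\varphi_t)$; the operator $V_t$ extends to a derivation on $\Lip_b(E)$ (again by~\cite[Theorem~6.1]{PaoSte14-flow}), satisfying~\eqref{eq_VtLeib1}, \eqref{eq_VtLip1} with $\overline V:=1-\varphi_t\le1$ (after modifying $V$ off a $\mu_T$-null set so that $\|V\|\le1$) and~\eqref{eq:V-bar-integrability}, since $\int_0^1\int_E(1-\varphi_t)\,d\mu_t\,dt\le\mu(E)<\infty$.

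\textbf{Proof of (ii).} Take $\A:=\Lip_b(E)$, which satisfies~($\A_1$), ($\A_2$) and~($B$) by Remark~\ref{rem:a1-a2_0}, so that Proposition~\ref{prop_fl2T'curr1} makes the space--time functional $T'$ of~\eqref{eq_fl2defT'} associated with $\{\mu_t\}$, $\{V_t\}$ extend to a normal current; as $E$ has the Radon--Nikodym property, Theorems~\ref{th_fl2T'repr2} and~\ref{thm:radon-nikodym} apply (with $\hat E=E$, $j=\mathrm{id}$) and produce a finite Borel measure $\eta$ on $C([0,1];E)$, concentrated on absolutely continuous curves, representing $T'$ without cancellation of mass, with $e_{t\#}\eta=\mu_t$ and $\dot\theta(t)=\hat V_t(\theta(t))$ for $\eta$-a.e.\ $\theta$ and a.e.\ $t$. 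Since $\hat V_t=\tilde V_t$ and $V_t$ induce the same derivation on $\A$, and $V$ already acts as a pointwise field through $Q^1(E)$ test functions, one gets $\hat V_t(x)=V(x)(1-\varphi_t(x))$, hence $|\tilde V_t|(x)=\|V(x)\|'(1-\varphi_t(x))$, for $\mu_t\otimes dt$-a.e.\ $(x,t)$. Granting (see below) that the finite measure $\lambda:=\int_0^1(1-\varphi_t)\mu_t\,dt$ on $E$ is absolutely continuous with respect to $\mu_T$, and recalling $\|V\|=1$ $\mu_T$-a.e., we obtain $\|V(x)\|'=1$ for $\mu_t\otimes dt$-a.e.\ $(x,t)$ with $\varphi_t(x)<1$; thus $|\tilde V_t|=1-\varphi_t$ and $\tilde V_t/|\tilde V_t|=V$ is independent of $t$ in the sense of Corollary~\ref{co_currconteq0} (the common value being the derivation underlying $T$). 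That corollary then gives that the normal current in space $T_0(f\,d\pi):=\int_0^1dt\int_E f(x)\langle V(x)(1-\varphi_t(x)),d\pi(x)\rangle\,d\mu_t(x)$ equals $T_\eta$ and is represented by $\eta$ without cancellation of mass.

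\textbf{Identifying $T_0=T\res\gamma$.} By construction $(1-\varphi_t)\mu_t=\beta_{t\#}(\sigma\res\{t+s<\ell(\theta)\})$, so for a Borel set $A\subset E$, disintegrating $\sigma=(q_{\#}\sigma)\otimes\kappa_\theta$ along $q$ and substituting $r=t+s$,
\[
\lambda(A)=\int_0^1dt\int_{C(\R^+;E)\times\R^+}\1_A(\theta(t+s))\,\1_{\{t+s<\ell(\theta)\}}\,d\sigma(\theta,s)\le\int_{C(\R^+;E)}\mathcal L^1\big(\{r<\ell(\theta):\theta(r)\in A\}\big)\,d(q_{\#}\sigma)(\theta).
\]
Since $\mu_T=\beta_{0\#}(\bar\eta\otimes\mathcal L^1\res[0,\ell(\theta)))$ and $q_{\#}\sigma\ll\bar\eta$, the right-hand side vanishes whenever $\mu_T(A)=0$, so $\lambda\ll\mu_T$; with $\lambda(E)\le\mu(E)<\infty$ we may set $\gamma:=d\lambda/d\mu_T\in L^1(E,\mu_T)$. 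Then $T_0(f\,d\pi)=\int_E f\langle V,d\pi\rangle\,d\lambda=\int_E\gamma f\langle V,d\pi\rangle\,d\mu_T=(T\res\gamma)(f\,d\pi)$, i.e.\ $\eta$ represents the normal current $T\res\gamma$, as claimed (after the evident reparametrization of curves from $[0,1]$ onto $\R^+$).

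\textbf{Main difficulty.} The heart of the matter is the absolute continuity $\lambda\ll\mu_T$: one must split each $\mu_t$ into its ``moving'' part, which lives on the arclength portions of the transporting curves and is therefore dominated by $\mu_T=\beta_{0\#}(\bar\eta\otimes\mathcal L^1\res[0,\ell(\theta)))$, and its ``stopped'' part, which can sit on a $\mu_T$-negligible set of curve endpoints; the function $\varphi_t$ encodes precisely this splitting, and it is the identification of $\mu_T$ with the mass swept by the arclength portions, together with $q_{\#}\sigma\ll\bar\eta$, that makes the estimate and hence the identification $T_0=T\res\gamma$ go through. The only other delicate point is the identification $\hat V_t=V(1-\varphi_t)$, which amounts to checking that the derivation extending $\pi\mapsto\langle V,d\pi\rangle$ from $Q^1(E)$ to $\Lip_b(E)$ is uniquely determined, for which one invokes the density results (Lemmata~\ref{lm_mcurr_densd1a}, \ref{lm_mcurr_densd1b}) already used in the proof of Lemma~\ref{lm_charODE_ban1}.
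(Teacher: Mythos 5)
Your proof is correct and its skeleton is the same as the paper's: assertion (i) is delegated to \cite[theorem~6.1(B)]{PaoSte14-flow}, and (ii) is obtained by feeding $\{\mu_t\}$, $V_t=V(1-\varphi_t)$ into Corollary~\ref{co_currconteq0} (using that $\tilde V_t/|\tilde V_t|=V$ is independent of $t$) and then converting the resulting current $\int_0^1 dt\int_E f\,\langle V(1-\varphi_t),d\pi\rangle\,d\mu_t$ into $T\res\gamma$ via the absolute continuity of $\lambda:=\int_0^1(1-\varphi_t)\mu_t\,dt$ with respect to $\mu_T$. Where you genuinely diverge is in making two ingredients self-contained that the paper simply cites: you re-derive the explicit form $V_t=V(1-\varphi_t)$ by disintegrating $\sigma$ along $\beta_t$ (the paper takes this from theorem~6.1(B)), and, more substantially, you prove $\lambda\ll\mu_T$ directly from $\mu_T=\beta_{0\#}(\bar\eta\otimes\mathcal L^1\res[0,\ell(\theta)))$, the substitution $r=t+s$ and $q_{\#}\sigma\ll\bar\eta$, whereas the paper invokes \cite[remark~6.3]{PaoSte14-flow} for $|V_t|\mu_t\ll\mu_T$. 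Your direct argument is sound (and is essentially the content of that remark), so the proof stands on fewer external references; the cost is length, plus you must (and do) flag the identification of the extended derivation $\tilde V_t$ of Subsection~\ref{subsec_extV1} with the pointwise field $V(1-\varphi_t)$ beyond $Q^1(E)$, a point the paper passes over silently. Two cosmetic slips: $\|V(x)\|'$ should be the norm $\|V(x)\|$ of $E$, not the dual norm, and the time domain in the statement is $\R^+$ rather than $[0,1]$, so the integrability of $\overline V$ should be bounded by $\int\ell(\theta)\,d(q_{\#}\sigma)$ (which is exactly why that hypothesis is present) rather than by $\mu(E)$; your closing remark about reparametrization covers this, but it deserves the explicit bound.
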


\begin{remark}
Clearly, the statement of Corollary~\ref{co_fl2Sgam2} remains valid (with obvious changes in the ODE),
if $E$ is just a metric space isometrically imbedded in a Banach space with Radon-Nikodym property.
\end{remark}

\begin{proof}
Claim~(i) is part of~\cite[theorem~6.1(B)]{PaoSte14-flow}.
To prove claim~(ii), note that
by Corollary~\ref{co_currconteq0} $\eta$ represents the normal current defined by
\[
T_{\eta} (f\,d\pi):= \int_{\R^+} \,dt\int_E f\, V_t\pi\, d\mu_t,
\]
without cancelation of mass, where $(V_t\pi)(x) := \langle V(x)(1-\varphi_t(x)), d\pi(x)\rangle$ for $\pi\in Q^1(E)$
(in fact, $V_t/|V_t|$) is independent of $t$).
But from remark~6.3 in~\cite{PaoSte14-flow} one has that $|V_t|\mu_t\ll \mu_T$ for a.e.\ $t\in \R^+$, and hence
there is a $\gamma\in L^1(E,\mu_T)$ such that
$T_{\eta} =T\res\gamma$ as claimed in~(ii).
\end{proof}

\section*{Acknowledgments} The authors would like to thank E.~Paolini and L.~Ambrosio for helpful discussions.

\appendix

\section{Auxiliary results on Lipschitz functions}\label{sec:density}

We start with the following observation.
If $E$ is separable, and $x_0\in E$, then the set $\{u\in \Lip_1(\bar B_R(x_0))\colon \|u\|_\infty\leq k\}$ is a bounded subset of $\Lip_b(E)$, hence the weak$^*$ topology of
$\Lip_b(E)$ over it is metrizable.

The lemmata below on Lipschitz functions are by no means optimal, we only collect here what we need in the paper.

\subsection{Approximation by families of Lipschitz functions}

\begin{lemma}\label{lm_mcurr_densd1a}
Let $E$ be a separable metric space, $D\subset E$ be a countable dense set in $E$.
Then
\begin{itemize}
\item[(i)] the family
\[
\left\{\bigwedge_{i=1}^n (c_i+d(\cdot, y_i))\colon c_i\in \R, y_i\in D, i=1,\ldots, n, n\in \N  \right\}
\]
is pointwise dense in $\Lip_1(E)$,
\item[(ii)] the family
\[
\left\{\bigwedge_{i=1}^n (c_i+\lambda_i d(\cdot, y_i))\colon  c_i\in \R, y_i\in D, \lambda_i\in \Q\cap[0,1], i=1,\ldots, n, n\in \N  \right\}
\]
is Lip-dense in $\Lip_1(E)$.
\end{itemize}
\end{lemma}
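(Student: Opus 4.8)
The plan is to use the McShane--Whitney representation of a Lipschitz function as an infimum of cone functions, together with density of $D$ and a rescaling trick to control the Lipschitz constant.

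For part~(i), let $u\in\Lip_1(E)$. Being $1$-Lipschitz, $u$ satisfies $u(x)\le u(y)+d(x,y)$ for every $y\in E$, with equality at $y=x$, so $u=\inf_{y\in E}(u(y)+d(\cdot,y))$ pointwise. A routine estimate (given $\varepsilon>0$, choose $y'\in D$ with $d(x,y')<\varepsilon$) shows the infimum may be restricted to $y\in D$. Enumerating $D=\{y_i\}_{i\ge1}$ and putting $u_n:=\bigwedge_{i=1}^n(u(y_i)+d(\cdot,y_i))$, each $u_n$ lies in the first family, is $1$-Lipschitz, and the sequence decreases pointwise to $\inf_{y\in D}(u(y)+d(\cdot,y))=u$; this is the asserted pointwise density (with uniformly bounded, indeed $\le1$, Lipschitz constants).

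For part~(ii), I would run the same argument with a rescaling. Set $L:=\Lip u\le1$; then $u$ is $L$-Lipschitz and, as above, $u=\inf_{y\in D}(u(y)+L\,d(\cdot,y))$. Choose $\lambda_n\in\Q\cap[0,1]$ with $\lambda_n\ge L$ and $\lambda_n\to L$ (so $\lambda_n\equiv1$ when $L=1$), and set $v_n:=\bigwedge_{i=1}^n(u(y_i)+\lambda_n\,d(\cdot,y_i))$, which belongs to the second family (all $\lambda_i$ equal). Since $u(y_i)+\lambda_n d(x,y_i)\ge u(x)-L\,d(x,y_i)+\lambda_n d(x,y_i)\ge u(x)$, one has $v_n\ge u$ everywhere; conversely, given $\varepsilon>0$ and $y_j\in D$ with $d(x,y_j)<\varepsilon$, for $n\ge j$ one gets $v_n(x)\le u(y_j)+\lambda_n d(x,y_j)\le u(x)+2\varepsilon$, so $v_n\to u$ pointwise. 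Finally $\Lip v_n\le\lambda_n$ (a minimum of $\lambda_n$-Lipschitz functions), while the lower semicontinuity of the Lipschitz constant under pointwise convergence, inequality~\eqref{eq_lsclip1} with $D=E$, gives $L=\Lip u\le\liminf_n\Lip v_n$; hence $\Lip v_n\to L=\Lip u$, i.e.\ $v_n$ Lip-converges to $u$, proving Lip-density.

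The only subtle point is the lower bound $v_n\ge u$, which is what dictates the choice $\lambda_n\ge L$ rather than $\lambda_n\le L$: with a smaller scaling factor the minimizing index in the finite infimum need not remain close to $x$, and the functions can drift toward $\inf_E u$ instead. Once $\lambda_n\ge L$ is imposed the rest is bookkeeping, and the borderline case $L=1$ (where no rational scaling factor exceeds $L$ within $[0,1]$) is covered by the trivial choice $\lambda_n\equiv1$, reducing part~(ii) to part~(i).
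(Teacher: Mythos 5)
Your proof is correct, and it follows the same basic strategy as the paper: the McShane representation $u=\inf_{y\in D}\bigl(u(y)+d(\cdot,y)\bigr)$, truncated to finite minima, together with lower semicontinuity of the Lipschitz constant (inequality~\eqref{eq_lsclip1}) for the Lip-convergence claim. The one genuinely different (and in fact cleaner) detail is in part~(ii). The paper takes $\bar\pi^m(x)=\bigwedge_{k=1}^m\bigl(\pi(y_k)+\lambda_k\,d(x,y_k)\bigr)$ with \emph{term-dependent} scalings $\lambda_k\in\Q\cap[0,1]$, $\lambda_k\searrow\Lip\pi$; but then the only obvious bound is $\Lip\bar\pi^m\le\max_{k\le m}\lambda_k=\lambda_1$, which does not tend to $\Lip\pi$ (indeed, already for $\pi\equiv 0$ on $E=\R$, $D=\Q$, $y_1=0$, the term $\lambda_1|x|$ forces $\lip\bar\pi^m(0)=\lambda_1$ for every $m$, so the claimed limit $\Lip\bar\pi^m\to\Lip\pi$ fails). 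You instead use a \emph{single} scaling $\lambda_n$ in the $n$-th truncation, $v_n=\bigwedge_{i=1}^n\bigl(u(y_i)+\lambda_n\,d(\cdot,y_i)\bigr)$ with $\lambda_n\ge L:=\Lip u$, $\lambda_n\to L$; this keeps $v_n\ge u$ (essential, as you note, for pointwise convergence), gives $\Lip v_n\le\lambda_n\to L$ directly, and then~\eqref{eq_lsclip1} supplies the matching lower bound. This is still an element of the stated family (all $\lambda_i$ allowed equal), so your variant proves the lemma cleanly and effectively repairs the bookkeeping in the paper's version. The handling of the borderline $L=1$ via $\lambda_n\equiv 1$ is also correct.
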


\begin{proof}
Let $D:=\{y_k\}$ and consider an arbitrary $\pi\in \Lip_1(E)$. Setting
\[
\tilde\pi(x):=\inf_k (\pi(y_k)+d(x,y_k)), \qquad \bar\pi(x):=\inf_k (\pi(y_k)+\Lip\, \pi d(x,y_k)),
\]
we have that $\tilde\pi (x)=\bar\pi(x)=\pi(x)$ since $\tilde\pi (y_k)=\bar\pi(y_k)=\pi(y_k)$ and $\tilde\pi\in \Lip_1(E)$,
$\bar\pi\in \Lip_1(E)$ (it suffices that
$\pi$, $\tilde\pi$ and $\bar\pi$ be continuous).
Thus for such a $\pi$  denoting
\[
\tilde \pi^m (x) :=\bigwedge_{k=1}^m (\pi(y_k)+d(x,y_k)),\qquad \bar \pi^m (x) :=\bigwedge_{k=1}^m (\pi(y_k)+\lambda_k d(x,y_k)),
\]
where $\lambda_k\in \Q\cap [0,1]$, $\lambda_k\searrow \Lip\,\pi$,
one gets $\lim_m \tilde \pi^m(x)=\lim_m \bar\pi^m(x) = \pi(x)$ for every $x\in E$ with $\Lip\,\tilde \pi^m \leq 1$, $\Lip\,\tilde \pi^m \leq \Lip\,\pi\leq 1$,
hence by~\eqref{eq_lsclip1} one has $\lim_m \Lip\,\bar\pi^m=\Lip\,\pi$, concluding the proof.
%
\end{proof}

\begin{lemma}\label{lm_mcurr_densd1b}
Let
$X$ be a separable Banach space. Then for every $y\in X'$ and for $\pi\in \Lip_1(X')$, $\pi(x):=\|x-y\|^\prime$,
where $\|\cdot\|^\prime$ stands for the norm in $X'$,
there is a sequence
\[
\pi^m(x)  = \bigvee_{j=1}^m (c_{j}+\langle a_j, x \rangle) \in \Lip_1(X'),
\]
where $c_{j}\in \R$, $\{a_j\}$ is a dense sequence in the unit ball of $X$,
with $\pi^m(x)\to \pi(x)$ for every $x\in X'$.
\end{lemma}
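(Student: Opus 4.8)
The plan is to use the duality formula $\|z\|^\prime=\sup\{\langle a,z\rangle\colon a\in X,\ \|a\|_X\le 1\}$, valid for every $z\in X'$, which expresses the norm on $X'$ as a supremum of the (weak$^*$-continuous) linear functionals $z\mapsto\langle a,z\rangle$ over the unit ball of $X$. Since $X$ is separable, its closed unit ball admits a countable dense sequence $\{a_j\}$, which will be the sequence appearing in the statement.

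First I would set $c_j:=-\langle a_j,y\rangle$, so that $c_j+\langle a_j,x\rangle=\langle a_j,x-y\rangle$ for all $x\in X'$, and observe
\[
\langle a_j,x-y\rangle\le\|x-y\|^\prime=\pi(x),
\]
the inequality being merely $\|a_j\|_X\le 1$. Hence the partial suprema
\[
\pi^m(x):=\bigvee_{j=1}^m\bigl(c_j+\langle a_j,x\rangle\bigr)=\bigvee_{j=1}^m\langle a_j,x-y\rangle
\]
satisfy $\pi^m\le\pi$, and each $\pi^m$ is $1$-Lipschitz on $X'$ as a finite maximum of the affine maps $x\mapsto\langle a_j,x-y\rangle$, which are $1$-Lipschitz because $|\langle a_j,x-x'\rangle|\le\|a_j\|_X\,\|x-x'\|^\prime\le\|x-x'\|^\prime$. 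This already produces functions $\pi^m\in\Lip_1(X')$ of the required form.

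Next I would verify the pointwise convergence $\pi^m(x)\to\pi(x)$. For fixed $x\in X'$ the sequence $\{\pi^m(x)\}$ is nondecreasing and bounded above by $\pi(x)$, so it converges to $\sup_j\langle a_j,x-y\rangle$. Given $\varepsilon>0$, the duality formula yields $a$ in the unit ball of $X$ with $\langle a,x-y\rangle>\|x-y\|^\prime-\varepsilon/2$; by density there is $a_j$ with $\|a_j-a\|_X<\varepsilon/\bigl(2(\|x-y\|^\prime+1)\bigr)$, whence $|\langle a_j-a,x-y\rangle|\le\|a_j-a\|_X\,\|x-y\|^\prime<\varepsilon/2$ and therefore $\pi^m(x)\ge\langle a_j,x-y\rangle>\|x-y\|^\prime-\varepsilon$ for every $m\ge j$. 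Letting $\varepsilon\to 0$ gives $\lim_m\pi^m(x)=\|x-y\|^\prime=\pi(x)$, as desired. There is no genuine obstacle in this argument; the only points that require a little care are that the supremum defining $\|\cdot\|^\prime$ need not be attained (so one works with an $\varepsilon$-approximate maximizer), and that the unit ball of $X$ is balanced, so the individual values $\langle a_j,x-y\rangle$ may be negative even though their supremum over the dense sequence recovers $\|x-y\|^\prime$.
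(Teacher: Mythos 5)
Your proof is correct and follows essentially the same route as the paper: both rest on the duality formula $\|x-y\|^\prime=\sup_j\langle a_j,x-y\rangle$ over a dense sequence $\{a_j\}$ in the unit ball of $X$, with $c_j=-\langle a_j,y\rangle$ and $\pi^m$ the partial maxima. Your write-up merely spells out the $1$-Lipschitz bound and the $\varepsilon$-approximation step that the paper leaves implicit.
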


\begin{proof}
For $\pi(x) =\|x-y\|^\prime$,
one has
$\pi(x)=\sup_j \langle a_j, x\rangle$,
hence denoting
\[
\pi^m(x)  := \bigvee_{j=1}^m \langle a_j, x-y  \rangle = \bigvee_{j=1}^m (c_{j}+\langle a_j, x \rangle)\in \Lip_1(X'),
\]
where $c_{j}=-\langle a_j, y\rangle\in \R$,
we get $\pi^m(x)\to \pi(x)$ for every $x\in X'$ as claimed.
\end{proof}

\begin{lemma}\label{lm_mcurr_densc0lip}
Let
$E$ be a closed set in a separable linear subspace of $\ell^\infty$.
Then for every $\pi\in \Lip_b(E)$ there is a sequence  $\{\pi_k\}\subset C^1(E)\cap \Lip_b(E)\cap \mathrm{Cyl}(E)$
such that $\lim\pi_k=\pi$ pointwise with
\[
\|\pi_{k}\|_\infty\leq  \|\pi\|_\infty, \quad \Lip\,\pi_{k}\leq \Lip\,\pi,\quad \text{and} \quad \limsup_\nu\lip\,\pi_k (x)\leq \lip\,\pi (x)
\]
 for every $x \in E$. The same assertion holds for $E\times \R$ instead of $E$.
\end{lemma}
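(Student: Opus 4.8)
The plan is to approximate $\pi$ by smooth functions of finitely many coordinates, obtained by (a) freezing the tail coordinates by norm--non-increasing finite rank operators that respect the coordinate structure and converge to the identity along $E$, and (b) mollifying inside the resulting finite dimensional slice. First I would fix a countable dense set $D=\{x^{(j)}\}_{j\ge1}\subset E$ and record the elementary fact that if $\{\pi_k\}\subset\Lip_b(E)$ has uniformly bounded Lipschitz constants and $\pi_k(x^{(j)})\to\pi(x^{(j)})$ for every $j$, then $\pi_k\to\pi$ pointwise on all of $E$, because for $x\in E$ and $y\in D$ one has $\limsup_k|\pi_k(x)-\pi(x)|\le 2\Lip\,\pi\cdot d(x,y)$, which is arbitrarily small. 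Since moreover $\lip\,\pi_k(x)$ and $\lip\,\pi(x)$ depend only on the values of $\pi_k$, $\pi$ on $E$ near $x$, a diagonal argument reduces the lemma to producing, for each $N\in\N$, a single $\psi\in C^1(E)\cap\Lip_b(E)\cap\mathrm{Cyl}(E)$ with $\|\psi\|_\infty\le\|\pi\|_\infty$, $\Lip\,\psi\le\Lip\,\pi$, $\max_{j\le N}|\psi(x^{(j)})-\pi(x^{(j)})|\le 1/N$, and $\Lip(\psi\res E\cap B_r(x))\le\lip\,\pi(x)+1/N$ for all $x\in E$ and all small $r$.

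For the construction at scale $1/N$: since $E=\overline D$ is the closure of the $\sigma$-compact set $D$, Lemma~\ref{lm_muPnapprox1} supplies finite rank linear projections $P_n\colon\ell^\infty\to\ell^\infty$ of unit norm with $P_nx\to x$ for every $x\in E$, and, inspecting its proof through Lemma~5.7 of~\cite{PaoSte11-acycl}, one may arrange that each coordinate of $P_nx$ depends on only finitely many coordinates of $x$. Composing $\pi$ with such a $P_n$ (after, if $E$ is not a subspace, transporting $\pi$ to the set $P_n(E)$, which is within Hausdorff distance $1/n$ of $E$ on each compact piece, by a near--inverse of $P_n$ or by a position--dependent McShane formula whose weights are the local constants $\Lip(\pi\res E\cap B_{1/n}(x^{(j)}))$) keeps $\mathrm{Cyl}$, does not increase the sup norm nor the Lipschitz constant since $\|P_n\|\le1$, converges pointwise to $\pi$ on $E$, and, using $P_nx\to x$ and the definition of $\lip$, recovers in the limit the bound $\lip\le\lip\,\pi$ at points of $E$. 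The result is a bounded Lipschitz function on a Euclidean slice $\R^{m_n}$; mollifying it there against a standard mollifier $\rho_\varepsilon$ produces a $C^\infty$ function which increases neither the sup norm nor the Lipschitz constant (for the inherited $\ell^\infty$ norm, convolution with a probability density being a $1$--averaging for every norm), converges uniformly as $\varepsilon\to0$, and has asymptotic Lipschitz constant at $x$ controlled by the Lipschitz constant of the unmollified function on $B_{2\varepsilon}(x)$. Choosing $\varepsilon$ small and restricting back to $E$ yields $\psi$. The case of $E\times\R$ is identical, viewing $E\times\R$ as a closed subset of $\ell^\infty$ with the real factor as one extra coordinate.

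The delicate point is the third estimate with the sharp constant. One must keep $\Lip\,\psi\le\Lip\,\pi$ globally (whence the insistence on $\|P_n\|\le1$ and on plain mollification, rather than a McShane extension off $E$, which would generically fail to be locally flat near the flat parts of $E$), while carrying out the finite dimensional smoothing \emph{intrinsically} with respect to $E$, so that $\lip\,\psi$ is governed by $\lip\,\pi$ and not merely by $\Lip\,\pi$. Reconciling the infinite dimensional coordinate reduction with this intrinsic smoothing --- in particular checking that the finite rank projections of Lemma~\ref{lm_muPnapprox1} can be taken coordinate--respecting, and that the transport of $\pi$ onto the slices does not inflate either Lipschitz constant --- is the technical heart of the argument.
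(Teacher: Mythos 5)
Your sketch follows the paper's basic strategy — approximate by composing with the finite--rank, norm--one projections $P_n$ supplied by Lemma~\ref{lm_muPnapprox1}, then mollify in the finite--dimensional range, then diagonalize — but at the crucial point you explicitly stop short and call it ``the technical heart of the argument'' without resolving it, so as written this is not a proof.

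The paper's route is simpler than what you describe: one extends $\pi$ to a Lipschitz function on all of $\ell^\infty$ (with the same global Lipschitz constant and sup norm) at the very outset, sets $\pi_k := \pi\circ P_k$, and works entirely with functions defined on $\ell^\infty$. Once that extension is in place there is no need to ``transport $\pi$ onto the slice $P_n(E)$'' by a near--inverse or by a position--dependent McShane formula with local weights, and no need to deal with the Hausdorff distance between $E$ and $P_n(E)$. The sup--norm and global Lipschitz bounds are then immediate from $\|P_k\|\le1$. For the asymptotic constant the paper computes directly, for $\rho>0$,
\[
\Lip\bigl(\pi_k\res B_\rho(x)\bigr)\le \Lip\bigl(\pi\res B_\rho(P_k(x))\bigr),
\]
which upon $\rho\to0^+$ gives $\lip\,\pi_k\le(\lip\,\pi)\circ P_k$ and hence $\limsup_k\lip\,\pi_k(x)\le\lip\,\pi(x)$ by upper semicontinuity and $P_kx\to x$. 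You reject a plain extension to $\ell^\infty$ on the grounds that it ``would generically fail to be locally flat near the flat parts of $E$''; this objection is worth raising (the McShane extension indeed need not have intrinsic $\lip$ at $x\in E$ equal to $\lip(\pi\res E)(x)$), but your alternative is not carried out, and it is precisely this point that you then describe as the unresolved technical heart. A proof sketch that names a difficulty and declines to address it has a gap at exactly that spot.

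One further caveat you raise is genuine and should not be glossed over: for $\pi\circ P_k$ to land in $\mathrm{Cyl}(E)$ one needs the finite--rank projections to be built out of coordinate functionals (so that they depend on only finitely many coordinates of the argument). Lemma~\ref{lm_muPnapprox1} only asserts unit norm and pointwise convergence, so the cylindricity of $P_n$ has to be checked against the actual construction in Lemma~5.7 of \cite{PaoSte11-acycl}. You flag this correctly, but again defer the verification rather than carry it out.

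In short: same skeleton, but you have left open precisely the steps that make the lemma nontrivial — how to get from the function on $E$ to cylindrical Lipschitz functions on $\ell^\infty$ without inflating the local asymptotic Lipschitz constant, and why the projections respect the coordinate structure. You should either adopt the paper's scheme (extend to $\ell^\infty$ first and prove the $\lip$ bound by the $\rho$--ball estimate and upper semicontinuity, addressing explicitly in which sense $\lip\,\pi$ is meant), or actually carry your alternative construction through to the end.
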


\begin{proof}
We consider an arbitrary $\pi\in \Lip_b(E)$ extended to a Lipschitz function over $\ell^\infty$.
Set $\pi_k (x):= \pi(P_k(x))$,
where $P_n\colon E\to \ell^\infty$ are finite rank projections defined in Lemma~\ref{lm_muPnapprox1}.
Clearly, $\lim_k\pi_k =\pi$ in the pointwise sense, and
\begin{align*}
\Lip\,(\pi_k\res B_\rho(x)) & = \sup_{\{u,v\}\in B_\rho(x)}   \frac{|\pi_k(u)-\pi_k(v)|}{\|u-v\|}\\
 &\leq \sup_{\{u,v\}\in B_\rho(x)} \frac{|\pi(P_k u)-\pi(P_k v)|}{\|P_k (u-v)\|} \frac{\|P_k (u-v)\|}{\|u-v\|}\\
 &\leq \sup_{\{u,v\}\in B_\rho(x)} \frac{|\pi(P_k u)-\pi(P_k v)|}{\|P_k (u-v)\|} = \sup_{\{u,v\}\in P_k (B_\rho(x))} \frac{|\pi(u)-\pi(v)|}{\|u-v\|}\\
& \leq \sup_{\{u,v\}\in B_\rho(P_k(x))} \frac{|\pi(u)-\pi(v)|}{\|u-v\|}=\Lip\,(\pi\res B_\rho(P_k(x)),
\end{align*}
so that taking the limit in the above estimate as $\rho\to 0^+$, we get
$\lip\,\pi_k\leq \lip\,\pi\circ P_k$, hence
$\limsup_k\lip\,\pi_k\leq \lip\,\pi$ pointwise,
and letting $\rho\to \infty$, we get
$\Lip\,\pi_k\leq \Lip\,\pi$.

Continuously differentiable cylindrical approximations of $\pi$ are now obtained by using convolutions of $\pi_k$
with some smooth approximate
identity, which we choose to have compact support. Indeed, in this way each $\pi_k$ is a limit of a pointwise convergent
sequence of uniformly bounded smooth functions of finite number of variables
$\{ \pi_{k n}\}_n$ with $\Lip\, \pi_{kn}\leq \Lip\, \pi_k$ (hence also Lip-converging)
and $\|\pi_{kn}\|_\infty\leq \| \pi_k \|_\infty$.
Recalling that Lip-convergence is metrizable over the set of functions with uniformly bounded Lipschitz constants, we may extract a diagonal sequence
$\hat\pi_k :=\pi_{kn(k)}$ Lip-convergent to $\pi$. Since
$\pi_{kn}$ are obtained by convolutions of $\pi_k$ with a kernel with compact support, we
have for every $\rho>0$ and for some $\bar n(\rho)\in \N$ that
\[
\lip \,\pi_{kn} (x) \leq
\Lip\,(\pi_{kn}\res B_{\rho}(x)) \leq \Lip\,(\pi_k\res B_{2\rho}(x)) \quad \text{for every $n \ge \bar n(\rho)$.}
\]
Recalling that
\[
\Lip\,(\pi_k\res B_{2\rho}(x))\leq \Lip\,(\pi\res B_{2\rho}(P_k(x))),
\]
and that $B_{2\rho}(P_k(x)) \subset  B_{3\rho}(x)$
for all  $k\in \N$ sufficiently large (depending on $x \in E$ and on $\rho$), we have
the existence of some $\bar k(x,\rho)\in \N$ such that
\[
\lip\, \hat{\pi}_{k} (x) = \lip \,\pi_{kn(k)} (x) \le \Lip\,(\pi\res B_{3\rho}(x)) \quad \text{for every $k \ge \bar k(x,\rho)$},
\]
and hence $\limsup_k   \lip\, \hat{\pi}_{k} (x) \le  \Lip\,(\pi\res B_{3\rho}(x))$, and, taking the limit in the latter
relationship as $\rho\to 0^+$, we get
$\limsup_k   \lip\, \hat{\pi}_{k} (x)\leq \lip\, \pi (x)$ as claimed.

The last assertion (about $E\times \R$) is proven exactly in the same way but with the operators
$(P_n, \mbox{Id})\colon (x,t)\in E\times \R\mapsto (P_n x, t) \in \ell^\infty\times \R$ instead of $P_n$, thus
concluding the proof.
\end{proof}

\subsection{Tensorization results}

In this subsection, we assume by default that $E\times [0,1]$ be equipped with the distance $d_2$.
The main auxiliary assertion of this section is the following tensorization lemma.

\begin{lemma}\label{lm_approxtens1}
Let $E$ be a separable metric space, $\A \subset \Lip_b(E)$ be an algebra of functions, and suppose that for every
$\pi\in \Lip_b(E\times [0,1])$ there is a sequence
$\{\tilde\pi_m\}\subset \Lip_b(E\times [0,1])$
of uniformly bounded functions with uniformly bounded Lipschitz constants
satisfying $\tilde\pi_m(\cdot,t)\in \A$ for every $t\in [0,1]$, $m\in \N$, and $\lim_m \tilde\pi_m =\pi$ pointwise.
Suppose that
for some $C>0$
\begin{eqnarray}\label{eq_Liplocconv1a1}
\text{either}& \limsup_m \lip\, \tilde\pi_m(x,t) & \leq C\lip\, \pi(x,t)\quad\text{for all $(x,t)\in E\times [0,1]$},\\
\text{or}& \limsup_m \Lip\, \tilde\pi_m &\leq C\Lip\, \pi.
\label{eq_Liplocconv1a0}
\end{eqnarray}
Then
there is a sequence  $\{\pi_j\}\subset \A\otimes PC^1([0,1])$
having the same properties as $\{\tilde \pi_m\}$ (possibly with a different constant $C>0$ which depends only on the original one).
\end{lemma}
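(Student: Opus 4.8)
The plan is to keep the space‑slices inside $\A$ and to replace the time dependence of the functions given by the hypothesis by piecewise‑affine interpolation on a fine grid. Given $\pi\in\Lip_b(E\times[0,1])$, let $\{\tilde\pi_m\}$ be the sequence supplied by the hypothesis. For $m,N\in\N$ put $t_k:=k/N$, $k=0,\dots,N$, and set
\[
\pi_{m,N}:=\sum_{k=0}^{N}\Lambda_k^N(t)\,\tilde\pi_m(\cdot,t_k),
\]
where $\Lambda_k^N\in PC^1([0,1])$ are the usual piecewise‑linear ``hat'' functions, so that $\pi_{m,N}(x,t)=(1-\lambda)\tilde\pi_m(x,t_k)+\lambda\,\tilde\pi_m(x,t_{k+1})$ for $t=(1-\lambda)t_k+\lambda t_{k+1}$, $\lambda\in[0,1]$. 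Since $\A$ is a linear space and $\tilde\pi_m(\cdot,t_k)\in\A$, we have $\pi_{m,N}\in\A\otimes PC^1([0,1])$. The desired sequence will be a diagonal one, $\pi_j:=\pi_{m_j,N_j}$ with $m_j,N_j\to\infty$; the only genuine issue is how fast $N_j$ must grow relative to $m_j$.

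Next I would record the estimates valid for every $m,N$. As $\pi_{m,N}(\cdot,t)$ is a convex combination of $\tilde\pi_m(\cdot,t_k)$ and $\tilde\pi_m(\cdot,t_{k+1})$, one gets $\|\pi_{m,N}\|_\infty\le\|\tilde\pi_m\|_\infty$, and, since $\tilde\pi_m$ is $\Lip\tilde\pi_m$‑Lipschitz in $t$ on an interval of length $1/N$, also $\|\pi_{m,N}-\tilde\pi_m\|_\infty\le\Lip\tilde\pi_m/N$. Splitting an increment of $\pi_{m,N}$ into a space part (bounded by $\sup_\sigma\Lip(\tilde\pi_m\res E\times\{\sigma\})\le\Lip\tilde\pi_m$ times $d(y,y')$) and a time part (bounded by the affine slopes $N|\tilde\pi_m(x,t_{k+1})-\tilde\pi_m(x,t_k)|\le\Lip\tilde\pi_m$ times $|s-s'|$) and using $a+b\le\sqrt2\,(a^2+b^2)^{1/2}$ gives $\Lip\pi_{m,N}\le\sqrt2\,\Lip\tilde\pi_m$. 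With these at hand the \emph{global} alternative \eqref{eq_Liplocconv1a0} is immediate: since $\Lip\tilde\pi_m$ is uniformly bounded, any diagonal sequence $\pi_j:=\pi_{j,N_j}$ with $N_j\ge j$ is uniformly bounded, has uniformly bounded Lipschitz constants, converges pointwise to $\pi$ (because $\tilde\pi_m\to\pi$ pointwise and $\|\pi_{j,N_j}-\tilde\pi_j\|_\infty\to0$), and satisfies $\limsup_j\Lip\pi_j\le\sqrt2\limsup_m\Lip\tilde\pi_m\le\sqrt2\,C\,\Lip\pi$, i.e.\ \eqref{eq_Liplocconv1a0} with constant $\sqrt2\,C$.

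The \emph{local} alternative \eqref{eq_Liplocconv1a1} is the main obstacle, and it hinges on the pointwise bound
\[
\lip\pi_{m,N}(x,t)\ \le\ 2\,\Lip\bigl(\tilde\pi_m\res B_{\sqrt2/N}(x,t)\bigr)\qquad\text{for all }(x,t)\in E\times[0,1],\ m,N\in\N,
\]
the ball being taken with respect to $d_2$. It is proved by restricting $\pi_{m,N}$ to a ball $B_\eps(x,t)$ with $\eps<1/N$ so small that it lies inside a single time‑interval $(t_k,t_{k+1})$ (or, when $t$ is a grid point, meets only two adjacent ones), writing $\pi_{m,N}$ there as the affine combination of the slices $\tilde\pi_m(\cdot,t_k)$, $\tilde\pi_m(\cdot,t_{k+1})$, bounding both the variation of these slices and the affine slope $N|\tilde\pi_m(y',t_{k+1})-\tilde\pi_m(y',t_k)|$ by $\Lip(\tilde\pi_m\res B_{\sqrt2/N}(x,t))$ --- which is legitimate because the grid times $t_k,t_{k+1}$ are within $1/N$ of $t$ --- and finally letting $\eps\downarrow0$. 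For fixed $m$ this already yields $\limsup_N\lip\pi_{m,N}(x,t)\le\sqrt2\,\lip\tilde\pi_m(x,t)$ pointwise. However, the convergence $\Lip(\tilde\pi_m\res B_r(x,t))\downarrow\lip\tilde\pi_m(x,t)$ as $r\downarrow0$ is not uniform in $(x,t)$, so the diagonal exponent $N_j$ must be chosen with care: I would fix a countable dense set $D\subset E\times[0,1]$ (here separability of $E$ enters), choose at stage $m$ for each of the first $m$ points $z\in D$ a radius $\rho_m(z)>0$ with $\Lip(\tilde\pi_m\res B_{\rho_m(z)}(z))\le\lip\tilde\pi_m(z)+1/m$, and then take $N_m\ge m$ with $\sqrt2/N_m\le\min_{z}\rho_m(z)$ (a finite minimum of positive numbers); this forces $\limsup_j\lip\pi_j(z)\le2\limsup_m\lip\tilde\pi_m(z)\le2C\,\lip\pi(z)$ for every $z\in D$, after which one transfers the bound to an arbitrary $(x,t)$ via density of $D$ and the upper semicontinuity of $\lip\pi$ (and of the $\lip\tilde\pi_m$).

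I expect this last transfer --- reconciling the estimate on $D$ with the required estimate at \emph{every} point, given that the asymptotic Lipschitz constant is only upper, not lower, semicontinuous --- to be the delicate step. It is plausibly handled either by sharpening the local bound above so that its right‑hand side becomes a supremum of $\lip\tilde\pi_m$ over a small ball (for which upper semicontinuity works in our favour), or by passing to the weaker $\mu_t\otimes dt$‑a.e.\ version of \eqref{eq_Liplocconv1a1} indicated in Remark~\ref{rem:a1-a2_1}, where only the behaviour off a negligible set is needed. In all cases the constant in the conclusion is of the form $C'=2C$ (or $\sqrt2\,C$ in the global alternative), as the statement allows.
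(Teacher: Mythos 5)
Your construction coincides with the paper's: the same piecewise‑affine interpolation in time on the grid $k/N$ built from the slices $\tilde\pi_m(\cdot,k/N)\in\A$, the same sup‑norm and global Lipschitz bounds, and a diagonal choice of $(m,N)$ (the paper works with $d_1$ on $E\times[0,1]$, which only changes the constant and is immaterial). Your treatment of the global alternative \eqref{eq_Liplocconv1a0}, and of pointwise convergence via $\|\pi_{m,N}-\tilde\pi_m\|_\infty\le c\,\Lip\tilde\pi_m/N$, is complete and in fact a bit simpler than the paper's exhaustion‑by‑compacts argument.

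The gap is exactly where you flag it: the local alternative \eqref{eq_Liplocconv1a1}. Your estimate $\lip\,\pi_{m,N}(x,t)\le 2\,\Lip\bigl(\tilde\pi_m\res B_{\sqrt2/N}(x,t)\bigr)$ is the paper's two‑ball estimate, and your stage‑wise choice of radii does give $\limsup_j\lip\,\pi_j(z)\le 2C\,\lip\,\pi(z)$ for $z$ in the countable dense set $D$; but the proposed transfer to an arbitrary $(x,t)$ ``by density and upper semicontinuity'' runs in the wrong direction. Upper semicontinuity of $\lip\,\pi_j$ bounds $\limsup_{z\to(x,t)}\lip\,\pi_j(z)$ by $\lip\,\pi_j(x,t)$, not conversely, so control at nearby points of $D$ yields no upper bound at $(x,t)$; nor can $\Lip$ of $\pi_j$ on a small ball be recovered from values of $\lip\,\pi_j$ at countably many points in a general (non‑length) metric space. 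Your fallback of settling for the $\mu_t\otimes dt$‑a.e.\ weak$^*$ version proves the weaker condition ($\A_1'$) of Remark~\ref{rem:a1-a2_1}, not the lemma as stated, which requires \eqref{eq_Liplocconv1a1} at every point. For comparison, the paper does not pass through a dense set at all: from the two‑ball estimate, valid for every $(x,t)$, every $m$ and every $n\ge\tilde n(\rho)$, it picks for each $j$ a radius $\rho_j$ with $\Lip\bigl(\tilde\pi_{\bar m(j)}\res(B_{2\rho_j}(x)\times B_{2\rho_j}(t))\bigr)\le 2\,\lip\,\tilde\pi_{\bar m(j)}(x,t)$ and imposes $n(j)\ge\tilde n(\rho_j)$, concluding $\limsup_j\lip\,\pi_j(x,t)\le 4C\,\lip\,\pi(x,t)$ directly. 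Note that this $\rho_j$ is also chosen at the given point, so the uniformity of the single sequence $n(j)$ — the very issue you isolated — is treated quite tersely there; but in any case your submitted argument, as written, does not establish \eqref{eq_Liplocconv1a1} for all $(x,t)$, so the local case remains unproved.
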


\begin{remark}\label{rm_approxtens1_metr}
The assertion of Lemma~\ref{lm_approxtens1} does not depend on the equivalent choice of the distance
in $E\times [0,1]$ (which only affects the constant $C>0$).
\end{remark}

\begin{proof}
In view of Remark~\ref{rm_approxtens1_metr} we assume $E\times [0,1]$ to be equipped with the distance $d_1$.
For  and $n \in\N$, and for each $k=0,\ldots, n$
we let $\tau_k := k/n \in [0,1]$.
Given a $\pi\in \Lip(E\times [0,1])$,
 and $\{\tilde\pi_m\}$ as in the statement being proven,
we approximate $\pi$ by linear interpolations with respect to the variable $t \in [0,1]$,
substituting $\pi(x,\tau_k)$ with $\tilde\pi_m(x,\tau_k)$,
introducing the
functions
\[
\pi_{nm} (x,s) := \sum_{k=0}^{n-1} \1_{[\tau_k,  \tau_{k+1} ) }(s) \left(  (k+1 - n s)
\tilde\pi_m(x, \tau_k)
+ (n s -k) 
\tilde\pi_m (x,  \tau_{k+1} )
\right).
\]
Clearly $\|\pi_{nm}\|_\infty \leq \|\tilde\pi_m\|_\infty$, so that all the functions $\pi_{nm}$ are uniformly bounded.
Notice  that
\[
\pi_{nm} (x,s) = (1 - n s )^+
\tilde\pi_m(x, \tau_0)
+ \sum_{k=1}^{n-1}  (1- |n s -k | )^+
\tilde\pi_m(x,  \tau_{k})
+ ( 1 + n s -n)^+
\tilde\pi_m(x,  \tau_{n}),
\]
and hence $\pi_{nm}\in \A
\otimes \Lip([0,1])$.

We estimate now $\Lip\, \pi_{nm}$.
Namely,
\begin{equation}\label{eq_convergence-as-algebra-dense0}
\begin{split}
\left|\pi_{nm} (x,s) - \pi_{nm}(y, s)\right| & \le |k+1 - n s| \cdot
| \tilde\pi_m(x, \tau_k) - \tilde\pi_m(y, \tau_k) |
\\
&\qquad\qquad
+ |n s -k| \cdot
| \tilde\pi_m(x,  \tau_{k+1})  - \tilde\pi_m(y,  \tau_{k+1}) |\\
& \le (k+1- n s + n s - k    ) \Lip\, \tilde\pi_m \,  d(x,y).
\\
&\leq  \Lip\, \tilde\pi_m\, d(x,y).
\end{split}
\end{equation}
On the other hand,
if $\{t,s\}\subset [\tau_k, \tau_{k+1})$, one has
\begin{equation}\label{eq_lipcnst1}
\begin{aligned}
|\pi_{nm} (y, s) - \pi_{nm}(y, t)| &\le  n |s-t|\cdot |\tilde\pi_m(y, \tau_k) - \tilde\pi_m(y,  \tau_{k+1} ) |\\
& \leq \Lip\,\tilde\pi_m n |s-t|\cdot |\tau_k-\tau_{k+1}  |= \Lip\,\tilde{\pi}_m |s-t|.
\end{aligned}
\end{equation}
For an arbitrary $n\in\N$ one has
\begin{equation}\label{eq_lipcnst2}
\begin{aligned}
| \pi_{nm} (y, s) - \pi_{nm}(y, t)| & \le |\pi_{nm} (y, s) - \pi_{nm}(y, \tau_{k+1})| +\\
&\qquad
\sum_{i={k+1}}^{h-1} |\pi_{nm} (y, \tau_{i}) - \pi_{nm}(y, \tau_{i+1}) | + \\
&\qquad |\pi_{nm} (y, \tau_{h}) - \pi_{nm}(y, t ) |
\le  \Lip\,\tilde\pi_m |s-t|.
\end{aligned}
\end{equation}
This together with~\eqref{eq_convergence-as-algebra-dense0} proves
\begin{equation}\label{eq_apprLip1}
\Lip\, \pi_{nm}\leq \Lip\, \tilde \pi_m,
\end{equation}
hence, in particular, all the Lipschitz constants $\Lip\,\pi_{nm}$ are uniformly bounded.

In view of pointwise convergence of $\tilde\pi_m$ and of uniform boundedness of their Lipschitz constants, for every compact $K\subset E$ and for every $j\in \N$ there is an $m(j,K)\in \N$ such that
\[
|\tilde\pi_m(x,t)-\pi(x,t)|\leq 1/j \quad\mbox{for all $(x,t)\in K\times [0,1]$},
\]
when $m\geq m(j,K)$, because convergence of $\tilde\pi_m$ is uniform over compact sets.
Since $E$ is separable, there is an increasing sequence $\{K_j\}$ of compact sets such that $E=\overline{\cup_j K_j}$. Set now
$\bar m(j):= m(j, K_j)$ and
let $n(j)\in \N$ satisfy $\lim_j n(j)=+\infty$.
Denote
$\pi_j:=\pi_{n(j)\bar m(j)}$.
For every $x \in \cup_j K_j$, $t \in [0,1]$, if $s \in [\tau_k, \tau_{k+1})$, since
$\Lip\,\pi_j\leq C$ for some $C>0$ independent of $j$, we get
\begin{equation}\label{eq_ptconvLip1}
\begin{aligned}
|\pi_j(x,s) - \pi(x,s)| & \le |\pi_j(x,s) - \pi_j(x,\tau_k)| + |\pi_j(x,\tau_k) - \pi(x,\tau_k)|
 + |\pi(x,\tau_k) - \pi(x,s)|
 \\
 & \leq C|s-\tau_k| + |\pi_{n(j) \bar m(j)}(x,\tau_k) - \pi(x,\tau_k)| +
 C |s-\tau_k|\\
 & = 2C|s-\tau_k| + |\tilde \pi_{\bar m(j)}(x,\tau_k) - \pi(x,\tau_k)|
 \le 2C |s - \tau_k| + 1/n(j) \\
 & \le 2C/n(j) + 1/n(j)
\end{aligned}
\end{equation}
for all $j\geq \bar{\jmath}$ with $\bar{\jmath}=\bar{\jmath}(x)\in \N$ is the smallest $j\in \N$ such that $x\in K_j$.
Thus, by density of $\cup_j K_j$ in $E$ and again by uniform boundedness of
$\{\Lip\,\pi_j\}$ we have $\lim_j\pi_j =\pi$ pointwise.
Now~\eqref{eq_Liplocconv1a0} with $\{\pi_j\}$ instead of $\{\tilde\pi_m\}$ follows from~\eqref{eq_apprLip1}.

Note that given a $\rho>0$ and letting $\tilde n(\rho):=\lfloor 1/2\rho\rfloor +1$ (so that
$1/\tilde n <2\rho)$, we have for every $m\in \N$, $n\geq \tilde n (\rho)$, $(x,t)\subset E\times [0,1]$
and $s\in B_\rho(t)$, $y\in B_\rho(x)$ the estimates
\begin{eqnarray*}
\Lip\, (\pi_{nm}\res (B_\rho(x)\times\{s\})) &\leq \Lip\, (\tilde \pi_m\res (B_\rho(x)\times B_{2\rho}(t)))
& \text{by~\eqref{eq_convergence-as-algebra-dense0},}\\
\Lip\, (\pi_{nm}\res (\{y\} \times B_\rho(t)) ) &\leq \Lip\, (\tilde \pi_m\res (B_\rho(x)\times B_{2\rho}(t)))
&\text{by~\eqref{eq_lipcnst1} and~\eqref{eq_lipcnst2}},
\end{eqnarray*}
which imply
\[
\Lip\, (\pi_{nm}\res (B_\rho(x)\times B_\rho(t))) \leq 2 \Lip\, (\tilde \pi_m\res (B_{2\rho}(x)\times B_{2\rho}(t))).
\]
Now, for every $j\in \N$ take a $\rho_j>0$ such that
\[
\Lip\, (\tilde \pi_{\bar m(j)}\res (B_{2\rho_j}(x)\times B_{2\rho_j}(t)))\leq 2\lip\, \tilde \pi_{\bar m(j)}(x,t),
\]
which gives
\[
\Lip\, (\pi_{n\bar m(j)}\res (B_{\rho_j}(x)\times B_{\rho_j}(t))) \leq 2\lip\, \tilde \pi_{\bar m(j)}(x,t)
\]
whenever $n\geq \tilde n(\rho_j)$. Taking now $n(j)\in \N$ with $n(j)\geq \tilde n(\rho_j)$ and $\lim_j n(j)=+\infty$,
and setting as above $\pi_j:= \pi_{n(j)\bar m(j)}$, it remains to verify~\eqref{eq_Liplocconv1a1} with $\{\pi_j\}$ instead of $\{\tilde\pi_m\}$. The latter holds in view of
\begin{align*}
\limsup_j \lip\, \pi_j(x,t) & = \limsup_j \lip\, \pi_{n(j)\bar m(j)}(x,t) \\
&\leq
\limsup_j 2 \Lip\, (\pi_{n(j)\bar m(j)}\res (B_{\rho_j}(x)\times B_{\rho_j}(t)))\\
& \leq 4\limsup_j\lip\, \tilde \pi_{\bar m(j)}(x,t) \leq 4 \limsup_m\lip\, \tilde \pi_m(x,t) \leq 4C \lip\pi(x,t)
\end{align*}
for all $(x,t)\in E\times [0,1]$ and~\eqref{eq_Liplocconv1a1}.
\end{proof}

\begin{remark}\label{rem_approxtens2}
Under the same conditions on $E$ and on the distance in $E\times [0,1]$ as in Lemma~\ref{lm_approxtens1}, if for every open
$U\subset E\times [0,1]$ and every
$\pi\in \Lip_b(E\times [0,1])$ with $\supp\, \pi\subset U$
there is a sequence
$\{\tilde\pi_m\}\subset \Lip_b(E\times [0,1])$ as in the statement of Lemma~\ref{lm_approxtens1}
satisfying
$\supp\, \tilde\pi_m\subset U$
instead of~\eqref{eq_Liplocconv1a1} and~\eqref{eq_Liplocconv1a1},
then still
there is a sequence  $\{\pi_j\}\subset \A\otimes PC^1([0,1])$
having the same properties as $\{\tilde \pi_m\}$. The proof of this fact is just identical to the first part
of the proof of Lemma~\ref{lm_approxtens1}, combined with the observation that
$\supp\, \tilde\pi_m\subset U$ implies $\supp\, \pi_{nm}\subset U$ for all sufficiently large $n\in \N$,
i.e.\ for all $n\geq \bar n(m,U)$. In the construction of $\pi_j$ one takes then $n(j)\geq \bar n(\bar m(j),U)$.
\end{remark}

\section{Auxililary lemmata on currents and derivations}

Throughout this section by default $E$ is assumed to be a complete metric space.
If 
$T\in \M_1(E)$, the we call a family $\mathcal{F}\subset \Lip_1(E)$
\emph{realizing the mass} of $T$, if $\mu_T = \bigvee\left\{\mu_{T\res d\pi}\colon \pi\in \mathcal{F}\right\}$.

\begin{lemma}\label{lm_massestV1}
Let 
$\mu$ be a finite positive Borel measure over $E$, and
$T\in \M_1(E)$ be given by
\[
T(f\,d\pi)=\int_E fV\pi\,d\mu,
\]
where $V\colon \Lip(E)\to L^1(\mu)$ is a linear operator, satisfying the Leibniz property and
\[
|(V\pi)(x)|\leq \bar V(x) (\lip\, \pi)(x)
\]
for all $\pi\in \Lip(E)$
and  $\mu$-a.e.\ $x\in E$ for some nonnegative
$\bar V\in L^1(E,\mu)$.
Then $\mu_T=|V|\mu$, and in particular,
\[
\MM(T)=\int_E |V|\,d\mu
\]
for some nonnegative function $|V| \in L^1(E,\mu)$ with $|V|(x)\leq \bar V(x)$ for $\mu$-a.e.\ $x\in E$.
Moreover,
\begin{itemize}
  \item[(i)] $|V|$ is the minimum function (up to $\mu$-a.e.\ equality) $\beta \in L^1(E,\mu)$ satisfying
\begin{equation}\label{eq_hatVlip1}
|(V\pi)(x)|\leq \beta(x) (\lip\, \pi)(x)
\end{equation}
for all $\pi\in \Lip(E)$;
  \item[(ii)] $|V\pi|\leq |V|$
for every $\pi\in \Lip_1(E)$;
   \item[(iii)] for every countable family $\mathcal{F}\subset \Lip_1(E)$ realizing the mass of $T$
one has
\[
|V|(x) =
\sup_{\pi\in \mathcal{F}} (V\pi)(x).
\]
\end{itemize}
\end{lemma}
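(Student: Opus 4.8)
The plan is to establish first that the bilinear functional $T(f\,d\pi)=\int_E fV\pi\,d\mu$ is indeed a well-defined normal current, then identify its mass measure, and finally verify the three characterizations. For the first part, the hypothesis $|(V\pi)(x)|\le\bar V(x)(\lip\,\pi)(x)$ with $\bar V\in L^1(E,\mu)$ immediately gives $|T(f\,d\pi)|\le\int_E|f|\,\lip\,\pi\,d\mu$; together with the Leibniz property this is exactly the locality and mass bound needed for $T\in\M_1(E)$ (one may either cite the characterization of metric $1$-currents via derivations from \cite[Appendix A,B]{PaoSte14-flow}, or invoke Lemma~\ref{lm_extnormalcurr1} with $\A:=\Lip_b(E)$, $\nu:=0$). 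Normality follows since $\partial T(f)=T(1\,df)=\int_E Vf\,d\mu$ also satisfies a mass bound by the same inequality, so $\mu_{\partial T}\le\bar V\mu$ is finite. Write $|V|:=\frac{d\mu_T}{d\mu}$; since $\mu_{T\res d\pi}\le\lip\,\pi\cdot\mu$ for every $\pi$, taking the supremum over a countable family realizing the mass gives $\mu_T\le\bar V\mu$, hence $|V|\in L^1(E,\mu)$ with $|V|\le\bar V$ $\mu$-a.e., and $\MM(T)=\int_E|V|\,d\mu$.

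For the characterizations, I would proceed as follows. Claim~(ii): for $\pi\in\Lip_1(E)$, the signed measure $A\mapsto T(\1_A\,d\pi)=\int_A V\pi\,d\mu$ has total variation $\le\mu_{T\res d\pi}\le\mu_T$, so $|V\pi|\,\mu=|\mu_{T\res d\pi}|\le\mu_T=|V|\mu$, whence $|V\pi|\le|V|$ $\mu$-a.e. Claim~(i): any $\beta\in L^1(E,\mu)$ satisfying \eqref{eq_hatVlip1} also yields $|T(f\,d\pi)|\le\int_E|f|\,\beta\,\lip\,\pi\,d\mu$, hence $\mu_{T\res d\pi}\le\beta\cdot\lip\,\pi\cdot\mu$ and therefore $\mu_T\le\beta\mu$ by taking the supremum over a mass-realizing family; this gives $|V|\le\beta$ $\mu$-a.e., so $|V|$ is indeed the minimum. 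Conversely $|V|$ itself satisfies \eqref{eq_hatVlip1} by (ii) applied to $\pi/\Lip\,\pi$ — more precisely, for $\pi\in\Lip(E)$ with $\lip\,\pi$ not identically constant one localizes: on each ball $B_\rho(x)$, $\pi$ is $(\Lip(\pi\res B_\rho(x)))$-Lipschitz there, and the locality of currents lets one bound $|V\pi|$ by $\Lip(\pi\res B_\rho(x))\cdot|V|$ on $B_\rho(x)$ up to $\mu$-null sets; letting $\rho\to0$ through a countable sequence gives $|V\pi|\le|V|\cdot\lip\,\pi$ $\mu$-a.e.

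Claim~(iii): for a countable family $\mathcal{F}\subset\Lip_1(E)$ realizing the mass, $\mu_T=\bigvee_{\pi\in\mathcal{F}}\mu_{T\res d\pi}$ by definition. Since $\mu_{T\res d\pi}=|V\pi|\,\mu$ — here one uses that $A\mapsto T(\1_A\,d\pi)$ is absolutely continuous with respect to $\mu$ with density $V\pi$, and for $\pi\in\Lip_1(E)$ the mass $\mu_{T\res d\pi}$ is precisely the total variation $|V\pi|\mu$ — the supremum of measures translates into an essential supremum of densities: $|V|=\operatorname{ess\,sup}_{\pi\in\mathcal{F}}|V\pi|$ $\mu$-a.e. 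Finally, to replace $|V\pi|$ by $V\pi$ (without absolute value), note that $\mathcal{F}$ being mass-realizing one may assume it closed under $\pi\mapsto-\pi$ (or argue that $\sup_{\pi\in\mathcal{F}}V\pi=\sup_{\pi\in\mathcal{F}}|V\pi|$ since for each $\pi$ either $\pi$ or $-\pi$ contributes the larger value pointwise), giving $|V|(x)=\sup_{\pi\in\mathcal{F}}(V\pi)(x)$ $\mu$-a.e.

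The main obstacle I anticipate is the passage in (i) from the global Lipschitz bound to the sharp local one $|V\pi|\le|V|\cdot\lip\,\pi$: this requires a careful localization argument using that the mass measure $\mu_{T\res d\pi}$ restricted to an open set $U$ is controlled by $\Lip(\pi\res U)\,\mu\res U$ (the content of Remark~\ref{rem_equivalent_Text_mass1}), followed by a covering of $E$ by countably many balls $B_\rho(x_j)$ shrinking appropriately — exactly the Radon-Nikodym-derivative argument sketched there. Once that locality refinement is in hand, everything else is bookkeeping with Radon-Nikodym derivatives and suprema of measures.
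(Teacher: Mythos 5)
Your treatment of the main claim and of parts (i)--(ii) is correct and essentially the paper's own argument: one gets $\mu_T\le \bar V\mu$ (the paper does this directly from the definition of mass as the least measure $\sigma$ with $|T(f\,d\pi)|\le \Lip\,\pi\int_E|f|\,d\sigma$, which avoids your detour through a countable mass-realizing family and the missing factor $\bar V$ in your inequality $\mu_{T\res d\pi}\le\lip\,\pi\cdot\mu$), then sets $|V|:=\frac{d\mu_T}{d\mu}$, obtains the minimality in (i) by replacing $\bar V$ by $\beta$ in the same estimate, and (ii) from the mass bound with $\pi\in\Lip_1(E)$. Your additional localization argument showing that $|V|$ itself satisfies \eqref{eq_hatVlip1} for all $\pi\in\Lip(E)$ is sound (it is the covering argument of Remark~\ref{rem_equivalent_Text_mass1}) and actually fills in a direction the paper leaves implicit. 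Also note that $T\in\M_1(E)$ is a hypothesis of the lemma, so your opening paragraph establishing that $T$ is a (normal) current is unnecessary.

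The genuine gap is in (iii), at the point where you remove the absolute values. The statement is asserted for \emph{every} countable family $\F\subset\Lip_1(E)$ realizing the mass, so you may not ``assume $\F$ closed under $\pi\mapsto-\pi$'', and your alternative remark (``either $\pi$ or $-\pi$ contributes the larger value'') presupposes $-\pi\in\F$, which is not given. Indeed, under your reading of $\mu_{T\res d\pi}$ as the total variation $|V\pi|\,\mu$, the claim would be false as written: take $E=[0,1]$, $\mu=\mathcal{L}^1$, $T(f\,d\pi)=\int_0^1 f\,\pi'\,dx$ and $\F=\{\pi_0\}$ with $\pi_0(x)=-x$; then $|V\pi_0|=1=|V|$ a.e., so $\F$ realizes the mass in the total-variation sense, yet $\sup_{\pi\in\F}V\pi=-1\ne|V|$. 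The paper's proof reads the definition differently: $\mu_{T\res d\pi}$ is the (signed) measure with density $V\pi$ with respect to $\mu$, and ``realizing the mass'' means $\mu_T=\bigvee\{\mu_{T\res d\pi}\colon\pi\in\F\}$ as a lattice supremum of these signed measures; since all densities are dominated by $|V|\in L^1(E,\mu)$ and the family is countable, the density of this supremum is $\sup_{\pi\in\F}(V\pi)$ $\mu$-a.e., and (iii) follows at once with no sign issue to patch. So your argument for (iii) must be rewritten with this convention; as it stands, the final step does not go through for an arbitrary given $\F$.
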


\begin{proof}
Since
\begin{equation}\label{eq_hatVlip2}
|T(f\,d\pi)| \leq \int_E |f|\, \left|V\pi\right|\,d\mu \leq \Lip\,\pi \int_E |f|\, \bar V\,d\mu,
\end{equation}
then $\mu_T\leq \bar V\, d\mu \ll \mu$. We may define now $|V|$ as the Radon-Nikodym derivative
\[
|V| := \frac{d\mu_T}{d\mu},
\]
thus getting automatically $|V|(x)\leq \bar V(x)$ for $\mu$-a.e.\ $x\in E$.
If $ \beta $ satisfies~\eqref{eq_hatVlip1} then
using $ \beta $ instead of $\bar V$ in~\eqref{eq_hatVlip2}, we get
$|V|(x)\leq \beta (x)$ for $\mu$-a.e. $x$ proving~(i).
Further, from~\eqref{eq_hatVlip2} with $|V|$ instead of $\bar V$ and $\pi\in \Lip_1(E)$ we get
\[
\int_E f\, \left|V\pi\right|\,d\mu \leq \int_E f\, |V|\,d\mu
\]
for all nonnegative $f\in L^1(E,\mu)$, which proves~(ii).
Finally, since $V\pi$ is the Radon-Nikodym derivative of $\mu_{T\res d \pi}$ with respect to $\mu$, it follows that
$ \sup_{\pi\in \mathcal{F}} (V\pi)(x)$ is $\mu$-a.e.\ equal to the density of  $\bigvee\left\{\mu_{T\res d\pi}\colon \pi\in \mathcal{F}\right\}$ with respect to $\mu$, i.e.\ $|V|$, which proves~(iii).
\end{proof}

The statements below give several examples of families realizing the mass of every current.

\begin{lemma}\label{lm_mcurr_linseq1a}
Let $T\in \M_1(E)$. If $\mathcal{F}\subset \Lip_1(E)$ is such that
\begin{itemize}
\item[(A)]  either 
there is a family $\tilde{\mathcal{F}}\subset \Lip_1(E)$ realizing the mass of $T$ such that
for every $\pi\in \tilde{\mathcal{F}}$
there is a sequence of functions $\{\pi_\nu\}\subset \mathcal{F}$ satisfying $\lim_\nu\pi_\nu =\pi$ pointwise over 
$E$,
\item[(B)]
or the family
\[
\mathcal{F}':=\left\{\bigwedge_{i=1}^n (c_i+\pi_i)\colon \pi_i\in \mathcal{F}, i=1,\ldots, n, n\in \N, c_i\in \R \right\}
\]
realizes the mass of $T$.
\end{itemize}
Then 
$\mathcal{F}$ realizes the mass of $T$.
\end{lemma}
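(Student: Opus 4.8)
The plan is to argue entirely in terms of the measures $\mu_{T\res d\pi}$, using two structural facts about metric currents: that $\mu_{T\res d\pi}$ is by definition the \emph{minimal} finite Borel measure $\sigma$ with $|T(f\,d\pi)|\le\int_E|f|\,d\sigma$ for all bounded Borel $f$, and that $\mu_{T\res d\pi}\le(\Lip\,\pi)\,\mu_T$; in particular $\mu_{T\res d\pi}\le\mu_T$ for $\pi\in\Lip_1(E)$. Set $\nu:=\bigvee\{\mu_{T\res d\pi}\colon\pi\in\mathcal F\}$. Then $\nu\le\mu_T$ is automatic, so the entire task is the reverse inequality $\mu_T\le\nu$.

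\emph{Case (A).} I would fix $\pi\in\tilde{\mathcal F}$ together with a sequence $\{\pi_\nu\}\subset\mathcal F$, $\pi_\nu\to\pi$ pointwise on $E$; since $\mathcal F\subset\Lip_1(E)$, the constants $\Lip\,\pi_\nu\le 1$ are uniformly bounded, so the continuity axiom for metric currents gives $T(f\,d\pi_\nu)\to T(f\,d\pi)$ for every $f\in\Lip_b(E)$. Hence
\[
|T(f\,d\pi)|=\lim_\nu|T(f\,d\pi_\nu)|\le\liminf_\nu\int_E|f|\,d\mu_{T\res d\pi_\nu}\le\int_E|f|\,d\nu,
\]
and, passing from $f\in\Lip_b(E)$ to arbitrary bounded Borel $f$, minimality of $\mu_{T\res d\pi}$ yields $\mu_{T\res d\pi}\le\nu$. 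Taking the supremum over $\pi\in\tilde{\mathcal F}$ and using that $\tilde{\mathcal F}$ realizes the mass, $\mu_T=\bigvee\{\mu_{T\res d\pi}\colon\pi\in\tilde{\mathcal F}\}\le\nu$, which is what we need.

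\emph{Case (B).} Here it suffices to show $\mu_{T\res d\psi}\le\nu$ for every $\psi=\bigwedge_{i=1}^n(c_i+\pi_i)\in\mathcal F'$, since then $\mu_T=\bigvee\{\mu_{T\res d\psi}\colon\psi\in\mathcal F'\}\le\nu$. Additive constants are harmless ($T(f\,d(c_i+\pi_i))=T(f\,d\pi_i)$, constants having vanishing differential by locality), so we may assume $\psi=\pi_1\wedge\cdots\wedge\pi_n$, and by induction reduce to $n=2$, $\psi=\pi_1\wedge\pi_2$. On the open set $\{\pi_1<\pi_2\}$ one has $\psi=\pi_1$, so locality gives $T(f\,d\psi)=T(f\,d\pi_1)$ for $f$ supported there, and symmetrically on $\{\pi_2<\pi_1\}$. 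On the closed tie set $\{\pi_1=\pi_2\}$ I would invoke the locality of the Weaver-type derivation $W$ attached to $T$ via $T(f\,d\pi)=\int_E fW\pi\,d\mu_T$ and $\mu_{T\res d\pi}=|W\pi|\mu_T$ (cf. Lemma~\ref{lm_massestV1} and \cite[Appendix]{PaoSte14-flow}): $W\pi_1=W\pi_2$ holds $\mu_T$-a.e. on $\{\pi_1=\pi_2\}$. Combining the three regions, $|W\psi|\le\max(|W\pi_1|,|W\pi_2|)$ $\mu_T$-a.e., i.e. $\mu_{T\res d\psi}\le\mu_{T\res d\pi_1}\vee\mu_{T\res d\pi_2}\le\nu$, completing the proof.

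The part of the argument I expect to be routine is case~(A): it is pure continuity of currents under pointwise convergence with bounded Lipschitz constants. The real point — and the only place a genuine structural input is needed — is the chain-rule/locality step in case~(B), namely that the differential of $\pi_1\wedge\pi_2$ coincides $\mu_T$-a.e. with that of the active branch, \emph{including on the coincidence set} $\{\pi_1=\pi_2\}$; this is precisely where one must use the locality of metric $1$-currents (equivalently, of the associated derivation) rather than merely the definition of the mass measure.
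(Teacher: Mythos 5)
Your proof is correct and follows essentially the same route as the paper: case (A) rests on the continuity axiom for currents under pointwise convergence with uniformly bounded Lipschitz constants, and case (B) on the chain rule/strong (Borel-set) locality for minima of Lipschitz functions, which is exactly what the paper invokes — you merely phrase it through the mass measures $\mu_{T\res d\pi}$ and the density $|W\pi|$ of the associated derivation instead of the paper's partition $\{A_j^i\}$ and the criterion $\sum_i T(f_i\,d\pi_i)\geq \mu_T(U)-\varepsilon$. One small wording slip: you cannot literally ``assume $\psi=\pi_1\wedge\cdots\wedge\pi_n$'' by discarding the constants inside the minimum (they determine which branch is active), but this is harmless since your region-by-region argument only uses $W(c_i+\pi_i)=W\pi_i$, so the bound $|W\psi|\leq\max_i|W\pi_i|$ holds as you state.
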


\begin{remark}\label{rm_Lip1_realmass1}
By Proposition~2.7 from~\cite{AmbrKirch00}
$\tilde{\mathcal{F}}:=\Lip_1(E)$ realizes the mass of every current $T$, and hence in particular, by the above
Lemma~\ref{lm_mcurr_linseq1a}(A) every family $\mathcal{F}\subset \Lip_1(E)$ dense with respect to pointwise convergence
in $\Lip_1(E)$ realizes the mass of every current.
\end{remark}

\begin{remark}\label{rm_Lip1_realmass2}
It is immediate form the proof that Lemma~\ref{lm_mcurr_linseq1a}(B) remains true, if in the definition of $\mathcal{F}'$
one changes $\wedge$ to $\vee$.
\end{remark}

\begin{proof}
It is enough to show that for every open set $U\subset E$ and for every $\varepsilon>0$
there is a sequence of bounded Borel functions
$\{f_i\}$ and a sequence $\{\pi_i\}\subset \mathcal{F}$, such
that
\begin{equation}\label{eq_mcurr_seq1b}
\sum_i T(f_i\,d\pi_i) \geq \mu_T(U)-\varepsilon,
\end{equation}
where $\sum_i f_i \leq \1_U$, $\pi_i\in \mathcal{F}$.

In case~(A) we know that~\eqref{eq_mcurr_seq1b} is true for $\pi_i\in \tilde{\mathcal{F}}$, and
using the continuity property of currents and the assumption on $\mathcal{F}$,
one finds for each $i \in \N$ an $m=m(i)$ such that
\begin{equation}\label{eq_mcurr_seq1c}
T(f_i\,d\pi_i^m) \geq T(f_i\,d\pi_i)-\varepsilon/2^i,
\end{equation}
where $\pi_i^m \in \mathcal{F}$. Thus from~\eqref{eq_mcurr_seq1b} and~\eqref{eq_mcurr_seq1c}
we get
$
\sum_i T(f_i\,d\pi_i^m) \geq \mu_T(U)-2\varepsilon,
$
showing the claim.

In case~(B) we have~\eqref{eq_mcurr_seq1b}
for $\pi_i\in \mathcal{F}'$ with
\[
\pi_i = \bigwedge_{j=1}^k (c_j^i+g_j^i), \qquad g_j^i\in \mathcal{F}, c_j^i\in \R, j=1,\ldots, k
\]
and $k=k(i)\in \N$.
In view of the chain rule and the locality property of currents one has
\[
T(f_i\,d\pi_i) =\sum_{j=1}^k T(\1_{A_j^i} f_i\,d(c_j^i+g_j^i))=\sum_{j=1}^k T(\1_{A_j^i} f_i\,dg_j^i),
\]
where
\begin{align*}
A_1^i &:=\{x\in E\colon c_1^i+g_1^i(x)\leq g_j^i(x), j=1,\ldots, n\},\\
A_l^i & :=\{x\in E\colon c_l^i+g_l^i(x)\leq g_j^i(x), j=1,\ldots, n\}\setminus\bigcup_{\nu=1}^{l-1} A_\nu^i, \qquad l=2,\ldots, k,
\end{align*}
and therefore
$
\sum_{i,j} T(\1_{A_j^i}f_i\,dg_j^i) \geq \mu_T(U)-2\varepsilon,
$
showing the claim.
\end{proof}

\begin{corollary}\label{co_mcurr_densd1c}
Let
$X$ be a separable Banach space and $T\in \M_1(X')$.
Then every family $\mathcal{F}$ of linear functionals over $X'$ from $X$ dense in the unit ball realizes the mass of $T$.
\end{corollary}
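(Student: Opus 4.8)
The plan is to deduce the statement from the lemmata on families realizing the mass (Lemma~\ref{lm_mcurr_linseq1a}, Remarks~\ref{rm_Lip1_realmass1} and~\ref{rm_Lip1_realmass2}) together with the pointwise density results of Lemmas~\ref{lm_mcurr_densd1a}(i) and~\ref{lm_mcurr_densd1b}. Throughout I identify an element $a$ of the unit ball of $X$ with the function $\pi_a\colon X'\to\R$, $\pi_a(\xi):=\langle a,\xi\rangle$; since $|\langle a,\xi\rangle-\langle a,\zeta\rangle|\le\|a\|_X\,\|\xi-\zeta\|'\le\|\xi-\zeta\|'$, one has $\pi_a\in\Lip_1(X')$. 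Since the unit ball of the separable space $X$ is itself separable, it suffices to treat a countable dense subset $D$ of it; write $\mathcal F:=\{\pi_a\colon a\in D\}$, the general case following because enlarging a mass-realizing family keeps it mass-realizing.

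First I would record that the family $\mathcal G:=\{\bigwedge_{i=1}^n(c_i+\|\cdot-y_i\|')\colon y_i\in X',\ c_i\in\R,\ n\in\N\}\subset\Lip_1(X')$ realizes the mass of $T$: by Lemma~\ref{lm_mcurr_densd1a}(i) every $\pi\in\Lip_1(X')$ is a pointwise limit of a sequence from $\mathcal G$ with uniformly bounded Lipschitz constants, so this follows from Remark~\ref{rm_Lip1_realmass1}. Next, set $\mathcal F^\vee:=\{\bigvee_{j=1}^m(c_j+\pi_{a_j})\colon a_j\in D,\ c_j\in\R,\ m\in\N\}$ and let $\mathcal H:=\{\bigwedge_{i=1}^n(c_i+g_i)\colon g_i\in\mathcal F^\vee,\ c_i\in\R,\ n\in\N\}$ be the sublattice of $\Lip_1(X')$ generated by $\mathcal F$ and the constants. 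By Lemma~\ref{lm_mcurr_densd1b}, applied with $\{a_j\}:=D$, each $\|\cdot-y\|'$ with $y\in X'$ is a pointwise limit of functions $\bigvee_{j=1}^m(c_j+\pi_{a_j})\in\mathcal F^\vee$ of Lipschitz constant $\le1$; taking finite minima, every element of $\mathcal G$ is then a pointwise limit, with uniformly bounded Lipschitz constants, of a sequence from $\mathcal H$. Hence Lemma~\ref{lm_mcurr_linseq1a}(A) (with $\tilde{\mathcal F}:=\mathcal G$, which realizes the mass by the previous paragraph) shows that $\mathcal H$ realizes the mass of $T$.

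It remains to peel off the lattice operations. Since $\mathcal H$ is exactly the $\wedge$-with-constants closure of $\mathcal F^\vee$, Lemma~\ref{lm_mcurr_linseq1a}(B) gives that $\mathcal F^\vee$ realizes the mass of $T$; and since $\mathcal F^\vee$ is the $\vee$-with-constants closure of $\mathcal F$, the $\vee$-version of Lemma~\ref{lm_mcurr_linseq1a}(B) recorded in Remark~\ref{rm_Lip1_realmass2} yields that $\mathcal F$ realizes the mass of $T$, which is the assertion.

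The one point deserving care is the first paragraph's appeal to Lemma~\ref{lm_mcurr_densd1a}(i), whose statement presupposes separability of the underlying space: when $X'$ is non-separable (as for $X=\ell^1$, $X'=\ell^\infty$, which is the relevant case in Theorem~\ref{th_fl2T'repr2}), one applies it not to $X'$ but to the separable — indeed $\sigma$-compact — set on which $\mu_T$ is concentrated, using that $\mu_T=\bigvee\{\mu_{T\res d\pi}\}$ and all the measures $\mu_{T\res d\pi}$ live there, so that only the approximation on (a countable dense subset of) that set enters the mass computation. This localization, already implicit in the passage from Lemma~\ref{lm_mcurr_densd1a}(i) to Remark~\ref{rm_Lip1_realmass1}, is the only genuinely delicate ingredient; everything else is bookkeeping with the lattice identities of Lemma~\ref{lm_mcurr_linseq1a}.
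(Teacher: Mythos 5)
Your argument is correct and follows essentially the same route as the paper's proof: pointwise density of minima of translated distance functions (Lemma~\ref{lm_mcurr_densd1a}(i) together with Remark~\ref{rm_Lip1_realmass1}), approximation of the distance functions by suprema of affine functionals from the unit ball (Lemma~\ref{lm_mcurr_densd1b}), and removal of the lattice operations via Lemma~\ref{lm_mcurr_linseq1a}(A),(B) and Remark~\ref{rm_Lip1_realmass2}, the only difference being the order in which the $\wedge$- and $\vee$-closures are peeled off (the paper also localizes, choosing the centers $y_i$ in a countable dense subset of $\supp T$, exactly the point you flag). One small fix: choose the countable set $D$ inside the given family $\mathcal{F}$ (possible since $\mathcal{F}$ is dense in the separable unit ball), because your enlargement argument requires $D\subset\mathcal{F}$, not merely $D$ dense in the unit ball.
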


\begin{proof}
Let
$\{y_k\}\subset \supp T$ stand for a countable dense set in $\supp T$.
By Remark~\ref{rm_Lip1_realmass1} and
Lemma~\ref{lm_mcurr_densd1a}(i)
we have that the family
\[
\mathcal{F}':=\left\{\bigwedge_{i=1}^n (c_i+\pi_i)\colon \pi_i(x)=\|x-y_i\|^\prime, i=1,\ldots, n, n\in \N, c_i\in \R \right\},
\]
where $\|\cdot\|^\prime$ stands for the norm in $X'$,
realizes the mass of $T$, and hence by Lemma~\ref{lm_mcurr_linseq1a}(B)
the family
\[
\tilde{\mathcal{F}}:=\left\{\pi_i\colon \pi_i(x)=\|x-y_i\|^\prime, i\in \N\right\}
\]
also realizes the mass of $T$.
But
for each $\pi_k\in \tilde{\mathcal{F}}$,
$\pi_k (x) =\|x-y_k\|^\prime$,
one has by Lemma~\ref{lm_mcurr_densd1b} that $\hat\pi^m(x)\to \pi(x)$ for every $x\in X'$ for
\[
\hat\pi^m(x) = \bigvee_{j=1}^m (c_{kj}+\langle a_j, x \rangle)\in \Lip_1(E),
\]
where $c_{kj}\in \R$, $a_j\in \mathcal{F}$. Therefore, applying Lemma~\ref{lm_mcurr_linseq1a}(A), we get
that now the family
\[
\mathcal{F}'':=\left\{\bigvee_{j=1}^m (c_i+\langle a_j, x \rangle)\colon a_j\in \mathcal{F}, i=1,\ldots, n,\quad n\in \N, c_i\in \R\right\}
\]
realizes the mass of $T$, so that applying Lemma~\ref{lm_mcurr_linseq1a}(B) with $\mathcal{F}''$ instead of $\mathcal{F}'$
(and having in mind Remark~\ref{rm_Lip1_realmass2}), we get the desired claim.
\end{proof}

\section{Weak$^*$ integral and Jensen inequality}\label{sec_wkstInt}

Let $(\Omega,\Sigma,\mu)$ be a finite measure space (the measure $\mu$ defined over a $\sigma$-algebra $\Sigma$ is positive and finite), $X$ be a Banach space equipped with the norm $\|\cdot\|$.
Assume that the function $f\colon \Omega\to X'$ is weak$^*$ scalarly measurable with respect to $\Sigma$ in the sense that
so are real valued functions $\langle \psi, f(\cdot)\rangle$ for every $\psi\in X$ and the norm
$\|f(\cdot)\|\in L^1(\Omega,\mu)$.
Clearly, then $\langle \psi, f(\cdot)\rangle\in L^1(\Omega,\mu)$ for every $\psi\in X$.
The formula
\[
\langle \psi, l\rangle:=\int_\Omega \langle \psi, f(\omega)\rangle\,d\mu(\omega)
\]
for every $\psi \in X$
defines therefore a linear functional over $X$ which is bounded, because
\[
|\langle \psi, l\rangle |\leq \|\psi\| \int_\Omega \|f\|\,d\mu.
\]
In other words, $l\in X'$ and we will call it the weak$^*$ integral of $f$, writing
\[
l=:\int_\Omega f(\omega)\,d\mu(\omega).
\]
Note that
\begin{itemize}
  \item[(1)] for every positive $\hat\mu\leq\mu$ one has the existence of  $\int_\Omega f\,d\hat\mu$;
  \item[(2)] one has
  \begin{equation}\label{eq_ineqJens_norm1}
    \left\|\int_\Omega f\,d\mu\right\|    \leq \int_\Omega\left\| f\right\|d\mu ,
  \end{equation}
  because
\begin{align*}
\left\|\int_\Omega f\,d\mu\right\|    &= \sup_{\psi\in X, \|\psi\|=1}\left\langle\psi, \int_\Omega f\,d\mu\right\rangle
=\sup_{\psi\in X, \|\psi\|=1}\int_\Omega \left\langle\psi,  f\right\rangle\,d\mu \\
        & \leq \int_\Omega \sup_{\psi\in X, \|\psi\|=1} \left\langle\psi,  f\right\rangle\,d\mu =\int_\Omega \left\|f\right\|d\mu .
\end{align*}
\end{itemize}

The following version of the converse statement to the Jensen inequality is valid.

\begin{lemma}\label{lm_revJensen_wkst1}
Suppose that the norm $\|\cdot\|^\prime$ is strictly convex. If
\begin{equation}\label{eq_revJensen1a}
\left\|
\int_\Omega f(\omega)\,d\mu(\omega)
\right\|=
\int_\Omega \left\| f(\omega) \right\|\,d\mu(\omega),
\end{equation}
then
either $f=0$ or for every $\psi\in X$ one has
\begin{equation}\label{eq_revJensen1b}
\langle \psi, f(\omega)\rangle =
\frac{\int_\Omega \langle \psi, f(\omega')\rangle \,d\mu(\omega')}{\int_\Omega \|f(\omega')\| \,d\mu(\omega')} \left\| f(\omega) \right\|
\end{equation}
for $\mu$-a.e.\ (possibly depending on $\psi$)  $\omega\in \Omega$, i.e.\ there is a set $N_\psi\subset\Sigma$ such
$\mu(N_\psi)=0$ and the above equality is valid for all $\omega\in \Omega\setminus N_\psi$.
\end{lemma}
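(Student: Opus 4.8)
The plan is to run the standard argument for the equality case in Jensen's inequality, adapted to the weak$^*$ setting. First I would dispose of the trivial case: if $\int_\Omega \|f\|\,d\mu = 0$ then $f = 0$ $\mu$-a.e., so assume $c := \int_\Omega \|f(\omega')\|\,d\mu(\omega') > 0$. Set $l := \int_\Omega f\,d\mu \in X'$; from \eqref{eq_revJensen1a} we have $\|l\|^\prime = c > 0$, so $l \neq 0$. By the Hahn--Banach theorem choose $\psi_0 \in X$ with $\|\psi_0\| = 1$ and $\langle \psi_0, l\rangle = \|l\|^\prime = c$. Then
\[
c = \langle \psi_0, l\rangle = \int_\Omega \langle \psi_0, f(\omega)\rangle\,d\mu(\omega) \le \int_\Omega \|\psi_0\|\cdot\|f(\omega)\|^\prime\,d\mu(\omega) = \int_\Omega \|f(\omega)\|^\prime\,d\mu(\omega) = c,
\]
so the inequality in the middle is an equality, which (since the integrand $\|f(\omega)\|^\prime - \langle\psi_0,f(\omega)\rangle$ is nonnegative) forces $\langle \psi_0, f(\omega)\rangle = \|f(\omega)\|^\prime$ for $\mu$-a.e.\ $\omega$.

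Next I would exploit strict convexity to pin down the direction of $f(\omega)$. For a fixed unit vector $\psi_0$ in the predual, the set of $\xi \in X'$ with $\|\xi\|^\prime \le 1$ and $\langle \psi_0, \xi\rangle = 1$ is a face of the unit ball; strict convexity of $\|\cdot\|^\prime$ means this face is a single point, call it $\xi_0$ (the unique norm-one functional attaining its norm at $\psi_0$ with value $1$). Hence for $\mu$-a.e.\ $\omega$ with $f(\omega) \neq 0$ we have $f(\omega)/\|f(\omega)\|^\prime = \xi_0$, i.e.\ $f(\omega) = \|f(\omega)\|^\prime\,\xi_0$; and this also holds (trivially, both sides zero) when $f(\omega) = 0$. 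Therefore $f(\omega) = \|f(\omega)\|^\prime\,\xi_0$ for $\mu$-a.e.\ $\omega$. Pairing with an arbitrary $\psi \in X$ gives $\langle\psi, f(\omega)\rangle = \langle\psi,\xi_0\rangle\,\|f(\omega)\|^\prime$ for $\mu$-a.e.\ $\omega$ (the exceptional null set here is the single set on which $f(\omega) = \|f(\omega)\|^\prime \xi_0$ fails, so in fact it may be taken independent of $\psi$).

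Finally I would identify the constant $\langle \psi, \xi_0\rangle$ in the form demanded by \eqref{eq_revJensen1b}. Integrating $\langle\psi, f(\omega')\rangle = \langle\psi,\xi_0\rangle\,\|f(\omega')\|^\prime$ over $\Omega$ gives $\int_\Omega \langle\psi, f(\omega')\rangle\,d\mu(\omega') = \langle\psi,\xi_0\rangle\, c$, so $\langle\psi,\xi_0\rangle = \big(\int_\Omega \langle\psi, f(\omega')\rangle\,d\mu(\omega')\big)/c$, which is exactly the coefficient appearing in \eqref{eq_revJensen1b}. Substituting back yields the claimed identity. The only point requiring a little care — and the closest thing to an obstacle — is the measurability/integrability bookkeeping needed to justify that $l = \int_\Omega f\,d\mu$ is well-defined and that the scalar Jensen/triangle inequality can be applied inside the integral; but this is precisely what the hypothesis ``weak$^*$ scalarly measurable with $\|f(\cdot)\|^\prime \in L^1(\Omega,\mu)$'' and the discussion of the weak$^*$ integral preceding the lemma (in particular \eqref{eq_ineqJens_norm1}) are set up to handle, so no genuine difficulty arises.
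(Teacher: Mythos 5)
The one step that fails as written is the appeal to Hahn--Banach to ``choose $\psi_0\in X$ with $\|\psi_0\|=1$ and $\langle\psi_0,l\rangle=\|l\|^\prime$''. Hahn--Banach produces a norming functional for $l$ in the bidual $X''$, not in the predual $X$: an element $l\in X'$ attains its supremum over the unit ball of $X$ only in special circumstances (by James' theorem this happens for \emph{every} $l\in X'$ precisely when $X$ is reflexive). In the situations where the lemma is applied in the paper, $X$ is essentially $\ell^1\times\R$ and $X'$ a renormed $\ell^\infty\times\R$ with strictly convex dual norm, so $X$ is not reflexive, and it is easy to produce data satisfying \eqref{eq_revJensen1a} whose weak$^*$ integral does not attain its norm: take $f(\omega)=g(\omega)\,\xi_0$ with $g\ge 0$ scalar integrable and $\xi_0\in X'$ a unit vector that does not attain its norm on the ball of $X$. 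Passing to a norming element $\Psi\in X''$ does not rescue the argument either, since $f$ is only weak$^*$ scalarly measurable, so $\omega\mapsto\langle f(\omega),\Psi\rangle$ need not be measurable for $\Psi\notin X$. So, as written, the key pointwise identity $\langle\psi_0,f(\omega)\rangle=\|f(\omega)\|^\prime$ a.e.\ has no $\psi_0$ to hang on.

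The gap is repairable while keeping your strategy: choose $\psi_n\in X$ with $\|\psi_n\|\le 1$ and $\langle\psi_n,l\rangle\to\|l\|^\prime=c$; then the nonnegative functions $\|f(\cdot)\|^\prime-\langle\psi_n,f(\cdot)\rangle$ tend to $0$ in $L^1(\Omega,\mu)$, hence a.e.\ along a subsequence. For a.e.\ $\omega$, $\omega'$ with $f\neq 0$, the unit vectors $u=f(\omega)/\|f(\omega)\|^\prime$ and $v=f(\omega')/\|f(\omega')\|^\prime$ satisfy $\langle\psi_{n_k},(u+v)/2\rangle\to 1$, whence $\|(u+v)/2\|^\prime=1$ and strict convexity forces $u=v$; after that your identification of the constant $\langle\psi,\xi_0\rangle$ by integration goes through verbatim, and it even yields a $\psi$-independent null set (a conclusion the paper itself only states under Bochner integrability, in the remark following the lemma). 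Note also that this is a genuinely different route from the paper's proof, which never norms $l$: there one splits $\mu=\mu\res e+\mu\res e^c$ for an arbitrary $e\in\Sigma$, uses strict convexity of $\|\cdot\|^\prime$ to force the two partial weak$^*$ integrals to be colinear, and concludes that the measures $\langle\psi,f(\cdot)\rangle\,\mu$ and $\|f(\cdot)\|\,\mu$ are proportional, which is exactly \eqref{eq_revJensen1b} with a null set depending on $\psi$.
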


\begin{remark}\label{rem_bochn_conv1}
If under the conditions of the above lemma $f$ is Bochner integrable, then either $f=0$ or
\begin{equation}\label{eq_revJensen1c}
f(\omega)=
\frac{\int_\Omega f(\omega')\,d\mu(\omega')}{\int_\Omega \|f(\omega')\| \,d\mu(\omega')} \left\| f(\omega) \right\|
\end{equation}
for $\mu$-a.e.\ $\omega\in \Omega$ as follows from~\cite{PaoSte14-flow}[lemma~B.1].
\end{remark}

\begin{proof}
Assume that $f\neq 0$.  Note that for every positive $\mu_1\leq\mu$
and $\mu_2:=\mu-\mu_1\leq\mu$ one has
\begin{equation}\label{eq_revJensen2a}
\begin{aligned}
\left\|
\int_\Omega f\,d\mu
\right\| &= \left\|\int_\Omega f\,d\mu_1 + \int_\Omega f\,d\mu_2\right\|
\leq \left\|\int_\Omega f\,d\mu_1\right\| + \left\|\int_\Omega f\,d\mu_2\right\|
\\
& \leq \int_\Omega \|f\|\,d\mu_1 + \int_\Omega \|f\|\,d\mu_2
= \int_\Omega \left\| f \right\|\,d\mu =\left\|
\int_\Omega f\,d\mu
\right\|,
\end{aligned}
\end{equation}
so that all the above inequalities are in fact equalities.
Since $\|\cdot\|^\prime$ is strictly convex, this gives in particular
\begin{equation}\label{eq_revJensen2b}
\int_\Omega f\,d\mu_1 =\lambda \int_\Omega f\,d\mu_2
\end{equation}
for some $\lambda\geq 0$.
Further, from~\eqref{eq_revJensen2a}, recalling~\eqref{eq_ineqJens_norm1}, one gets
$\left\|
\int_\Omega f\,d\mu_i
\right\|=
\int_\Omega \left\| f \right\|\,d\mu_i
$, 
$i=1,2$,
and hence
\begin{equation}\label{eq_revJensen2c}
\int_\Omega \|f\|\,d\mu_1 =\lambda \int_\Omega \|f\|\,d\mu_2
\end{equation}
Writing
\begin{align*}
\int_\Omega f\,d\mu_2
=\int_\Omega f\,d\mu - \int_\Omega f\,d\mu_1,
\qquad
\int_\Omega \|f\|\,d\mu_2
=\int_\Omega \|f\|\,d\mu - \int_\Omega \|f\|\,d\mu_1,\\
\end{align*}
from~\eqref{eq_revJensen2b} and~\eqref{eq_revJensen2c} we get
\begin{align*}
(1+\lambda) \int_\Omega f\,d\mu_1
=\lambda \int_\Omega f\,d\mu,
\qquad
(1+\lambda) \int_\Omega \|f\|\,d\mu_1
=\lambda \int_\Omega \|f\|\,d\mu.
\end{align*}
Since $\lambda\neq -1$ (otherwise the above relationship would give $\int_\Omega \|f\|\,d\mu=0$, hence
$f=0$
contrary to the
assumption), then
\begin{align*}
\int_\Omega f\,d\mu_1  & =\lambda \int_\Omega f\,d\mu,
\qquad
\int_\Omega \|f\|\,d\mu_1
=\lambda \int_\Omega \|f\|\,d\mu
\end{align*}
for some (different) $\lambda\geq 0$. Taking an arbitrary $\psi\in X$ and letting $\mu_1:=\mu\res e$ be an arbitrary $e\subset \Sigma$,
we rewrite the above equalities as
\begin{align*}
\int_e \langle \psi, f\rangle\,d\mu
=\lambda \int_\Omega \langle \psi, f\rangle\,d\mu,
\qquad
\int_e \|f\|\,d\mu  
=\lambda \int_\Omega \|f\|\,d\mu
\end{align*}
(note that, of course, $\lambda$ depends on $e$), thus getting
\[
\int_e \langle \psi, f\rangle\, d\mu =
\frac{\int_e \langle \psi, f\rangle \,d\mu}{\int_\Omega \|f\| \,d\mu}
\int_\Omega \langle \psi, f\rangle\, d\mu,
\]
which implies the equality of measures
\[
 \langle \psi, f(\cdot)\rangle \mu =
\frac{\int_\Omega \langle \psi, f\rangle \,d\mu}{\int_\Omega \|f\| \,d\mu}
\|f(\cdot)\|\mu.
\]
The latter provides~\eqref{eq_revJensen1b}.
\end{proof}

\bibliographystyle{plain}
\bibliography{mathopt}

\end{document}